\definecolor{FrenchPink}{RGB}{255,118,164}
\definecolor{VividMulberry}{RGB}{192,17,215}
\definecolor{DenimBlue}{RGB}{47,60,190}
\newtheoremstyle{thm}{}{}{\slshape}{}{\bfseries}{}{.5em}{}
\theoremstyle{thm}
\newtheorem{thm}{Theorem}
\newtheorem{prop}{Proposition}
\newtheorem{lem}{Lemma}
\newtheorem{cor}{Corollary}
\newtheorem*{conj*}{Conjecture}
\newtheorem*{defn*}{Definition}
\newtheorem{thmout}{Theorem} 
\newtheoremstyle{ex}{}{}{}{}{\scshape}{{:}}{.5em}{}
\theoremstyle{ex}
\newtheorem*{ex*}{Example}
\newtheorem*{ack*}{Acknowledgements}
\newtheoremstyle{rem}{}{}{}{}{\scshape}{}{.5em}{}
\newtheorem{rem}{Remark}
\newtheoremstyle{pr}{}{}{}{}{\scshape}{{:}}{.5em}{}
\theoremstyle{pr}
\newtheorem*{pr*}{Proof}
\renewenvironment{proof}[1][\proofname]{{\noindent\scshape #1:}}{\null\hfill\qedsymbol}
\renewcommand{\thesection}{\Roman{section}}
\titleformat{\section}[block]{\large\scshape}{\thesection.}{0.5em}{}
\renewcommand{\thesubsection}{\Roman{section}.\alph{subsection}}
\titleformat{\subsection}[block]{\scshape}{\thesubsection.}{0.5em}{}
\titleformat{\subsubsection}[block]{\bfseries}{}{0.5em}{}
\newcommand{\titre}{Convergence and Riemannian bounds} 
\newcommand{\titrep}{on Lagrangian submanifolds} 
\newcommand{\titrepp}{Convergence and Riemannian bounds on Lagrangian submanifolds}
\newcommand{\prenomauteur}{Jean-Philippe} 
\newcommand{\nomauteur}{Chass\'e} 
\newcommand{\pagetitre}[4]{
\noindent\rule{\linewidth}{1pt}
\begin{center}
\begin{tabular}{c}
{\LARGE\textmd{\scshape #1}}\\[2pt]
{\LARGE\textmd{\scshape #2}}\\[15pt]
{\large #3 {\scshape #4}$^*$}
\end{tabular}
\end{center}
\vspace{-0.2cm}
\noindent\rule{\linewidth}{1pt}}
\renewenvironment{abstract}{%
    \if@twocolumn
      \section*{\abstractname}%
    \else\small 
      \begin{center}%
        {\scshape \abstractname\vspace{-0.25cm}}
      \end{center}%
      \quotation
    \fi}
    {\if@twocolumn\else\endquotation\fi}
\renewcommand\tableofcontents{%
	\begin{center}%
	{\scshape \contentsname\vspace{-0.25cm}}
	\end{center}%
	\@mkboth{\MakeUppercase\contentsname}{\MakeUppercase\contentsname}%
	\@starttoc{toc}%
}
\newcommand{\Z}{\mathds{Z}}
\newcommand{\R}{\mathds{R}}
\newcommand{\D}{\mathds{D}}
\newcommand{\Id}{\mathds{1}}
\newcommand{\del}{\partial}
\newcommand{\grad}{\operatorname{grad}}
\let\phi\varphi
\let\epsilon\varepsilon
\let\emptyset\varnothing
\begin{document}
\thispagestyle{first}

\pagetitre{\titre}{\titrep}{\prenomauteur}{\nomauteur}

\begin{abstract}
\noindent
We consider collections of Lagrangian submanifolds of a given symplectic manifold which respect uniform bounds of curvature type coming from an auxiliary Riemannian metric. We prove that, for a large class of metrics on these collections, convergence to an embedded Lagrangian submanifold implies convergence to it in the Hausdorff metric. This class of metrics includes well-known metrics such as the Lagrangian Hofer metric, the spectral norm and the shadow metrics introduced by Biran, Cornea and Shelukhin \cite{BiranCorneaShelukhin2018}. The proof relies on a version of the monotonicity lemma, applied on a carefully-chosen metric ball.
\end{abstract}
\noindent\rule{\linewidth}{1pt}
\tableofcontents
\noindent\rule{\linewidth}{1pt}

\section{Introduction and main result} \label{sec:intro}
Spaces of Lagrangian submanifolds are generally analyzed in a metric sense at large scale. For example, there has been a great amount of work on whether the space of Lagrangian submanifolds, subject to some topological constraints, has infinite diameter, or whether the metric admits upper bounds in terms of intersection numbers (see for example \cite{Khanevsky2009, Humiliere2012, Zapolsky2013, Seyfaddini2014, BiranCorneaShelukhin2018}). However, a less studied problem is the local behavior of those metrics, even in the case of the well-known Lagrangian Hofer metric. This is probably due to the fact that, without additional constraints, converging sequences can be quite wild from a set-theoretic standpoint. Furthermore, before the advent of shadow metrics, one only looked at Hamiltonian isotopic Lagrangian submanifolds ---~or at least, conjecturaly Hamiltonian isotopic ones. \par

The purpose of this paper is to show that, when we only look at Lagrangian submanifolds behaving well with respect to some auxiliary Riemannian metric, converging sequences cannot be wild. More precisely, we will show that they also converge to the same Lagrangian submanifold in the Hausdorff metric associated to the auxiliary Riemannian metric. Note however that we only look at sequences converging to an embedded Lagrangian submanifold. This restriction is necessary since sequences converging in certain metrics could theoretically converge to an immersed Lagrangian submanifold. As noted above, this result is of particular interest when applied to the weighted fragmentation metrics of Biran, Cornea, Shelukhin and Zhang \cite{BiranCorneaShelukhin2018, BiranCorneaZhang2021}. Indeed, these metrics exist on spaces of Lagrangian submanifolds which are not necessarily of the same homotopy type, let alone Hamiltonian isotopic. But, as we shall see below, the presence of bounds coming from a Riemannian metric forces neighboring Lagrangian submanifolds to be homeomorphic. \par

\subsection{Some notation and definitions} \label{subsec:notation}
Before writing down the main result in a more precise form, we need to fix some notation and definitions. \par

\subsubsection*{Riemannian bounds} \label{subsubsec:metric}
We begin by introducing some Riemannian quantities that will serve to restrict the classes of the Lagrangian submanifolds that we will consider. \par

Let $L$ be a submanifold of a Riemannian manifold $(M,g)$. We can see its second fundamental form $B_L$ as a section of the bundle $(TL\otimes TL\otimes TL^\perp)^*\to L$, where $\perp$ denotes the orthogonal complement with respect to $g$. We thus define the \emph{norm} of the second fundamental form to be
\begin{align*}
||B_L||:=\sup_{x\in L} |B_L(x)|_{\tilde{g}_x},
\end{align*}
where $\tilde{g}_x$ is the scalar product induced by $g$ on $(T_xL\otimes T_xL\otimes T_xL^\perp)^*$. When $\dim L=1$ and $\dim M=2$ ---~a case that will be of particular interest to us later on~--- this is just the supremum of the geodesic curvature of $L$. \par

In general, uniformly bounding the norm of the second fundamental form will not be enough for our purposes. We will thus make use of another quantity, which gives a better control on the way $L$ is embedded in $M$. \par

\begin{defn*}
Let $(M,g)$ be a Riemannian manifold, and $L$ be a submanifold. Let $\epsilon\in(0,1]$. We say that $L$ is \textbf{$\epsilon$-tame} if
\begin{align*}
\frac{d_M(x,y)}{\min\{1,d_L(x,y)\}}\geq\epsilon \qquad\forall x\neq y\in L,
\end{align*}
where $d_M$ is the distance function on $M$ induced by $g$, and $d_L$ is the distance function on $L$ induced by $g|_L$.
\end{defn*}

\begin{rem} \label{rem:names}
This is a variation of the tame condition appearing in work of Sikorav \cite{Sikorav1994}. More precisely, it is equivalent to the (T'1) condition. This condition also appeared under the name $\epsilon$-Lipschitz in work of Groman and Solomon \cite{GromanSolomon2014, GromanSolomon2016}.
\end{rem}

\subsubsection*{Collections of Lagrangian submanifolds} \label{subsubsec:lagrangians}
In general, there is no hope of being able to meaningfully compare two arbitrary Lagrangian submanifolds of a given symplectic manifold. That is why, when defining metrics on spaces of Lagrangian submanifolds, it is necessary to consider more restricted collections of Lagrangian submanifolds, e.g.\ Hamiltonian isotopic ones, exact ones, etc. \par

In the present paper, the symplectic properties of the Lagrangian submanifolds considered will not matter inasmuch as they allow for the definition of well-behaved metrics between them. However, to give an idea of the collections to which our result applies, we present some interesting choices of collection $\mathscr{L}^\star(M)$ of Lagrangian submanifolds of a given symplectic manifold $(M,\omega)$. Note that throughout this paper, we will assume that $M$ is connected, and either closed or convex at infinity. We will also assume that the Lagrangian submanifolds are closed and connected. \par

\begin{itemize}[leftmargin=2.8cm]
\item[($\star=L_0$):] Here, $L_0$ is some fixed Lagrangian submanifold. Then, $\mathscr{L}^{L_0}(M)$ is the Hamiltonian orbit of $L_0$, i.e.\  the set of Lagrangian submanifolds of the form $\phi(L_0)$ for some Hamiltonian diffeomorphism with compact support $\phi$. \par

\item[($\star=e$):] For this collection to make sense, we need to suppose that $M$ is exact. Then, $\mathscr{L}^{e}(M)$ is the collection of exact Lagrangian submanifolds. \par

\item[($\star=we$):] Here, $\mathscr{L}^{we}(M)$ is the collection of weakly exact Lagrangian submanifolds, i.e.\ Lagrangian submanifolds $L$ such that the morphism $\omega:\pi_2(M,L)\to\R$ given by integration with respect to $\omega$ is trivial. Note that the existence of such a Lagrangian submanifold implies that $M$ is symplectically aspherical. \par

\item[($\star=m(\rho,d))$:] Here, $\rho>0$ and $d\in\Z_2$. Then, $\mathscr{L}^{m(\rho,d)}(M)$ is the collection of Lagrangian submanifolds $L$ such that
\begin{enumerate}[label=(\roman*)]
\item the Maslov index $\mu:\pi_2(M,L)\to \Z$ satifies $\omega =\rho \mu$;
\item the minimum Maslov number is $N_L\geq 2$;
\item the modulo 2 count of $J$-holomorphic disks with boundary along $L$ of Maslov index 2 ---~for $J$ generic~--- is equal to $d$.
\end{enumerate}
\end{itemize}

As noted by Biran, Cornea and Zhang \cite{BiranCorneaZhang2021}, when one is interested in fragmentation metrics, it might be necessary to restrict oneself to a subcollection of one of the above choices. \par

The result that we will present does not hold for all Lagrangian submanifolds in $\mathscr{L}^\star(M)$. Indeed, we will need to impose some uniform bounds coming from Riemannian geometry. Therefore, we fix a Riemannian metric $g$ on $M$ and constants $\Lambda\geq 0$, $\epsilon\in (0,1]$. We then introduce two new types of subcollections:
\begin{gather*}
\mathscr{L}^\star_\Lambda(M,g):=\left\{L\in\mathscr{L}^\star(M)\ \middle|\ ||B_L||\leq\Lambda\right\} \\
\mathscr{L}^\star_{\Lambda,\epsilon}(M,g):=\left\{L\in\mathscr{L}^\star_\Lambda(M,g)\ \middle|\ \text{$L$ is $\epsilon$-tame}\right\}. \\
\end{gather*}
We recall that we always consider our Lagrangian submanifolds to be closed and connected. When it is evident from the context, we will omit $g$ from the notation. \par

\subsubsection*{\texorpdfstring{$J$}{J}-adapted metrics on collections of Lagrangian submanifolds} \label{subsubsec:fragmentation}
We now describe the type of metrics that we will be putting on our collections $\mathscr{L}^\star(M)$. In order to do so, we fix an almost complex structure $J$ that is compatible with $\omega$. We begin by presenting certain pseudometrics which we will call \emph{$J$-adapted}. These are defined using an auxiliary family $\mathscr{F}\subseteq\mathscr{L}^\star(M)$. We thus also fix such a family. For $J$ generic, a $J$-adapted pseudometric $d^\mathscr{F}$ will be one of the following. \par

\begin{itemize}[leftmargin=3.2cm]
\item[($d^\mathscr{F}=d_H$):] This is the case of the Lagrangian Hofer metric. It is then understood that $\mathscr{L}^\star(M)\subseteq \mathscr{L}^{L_0}(M)$ for some $L_0$, and that $\mathscr{F}=\emptyset$. Note that this is actually a metric.  \par

\item[($d^\mathscr{F}=\gamma$):] This is the case of the spectral norm. It is then understood that $\mathscr{L}^\star(M)\subseteq \mathscr{L}^{L_0}(M)$ for some $L_0\in \mathscr{L}^{we}(M)\cap\mathscr{L}^{m(1,0)}(M)$. In this case, we take $\mathscr{F}=\emptyset$. This is again a metric. Finally, when $M=T^*L_0$ and $L_0$ is spin, then the metric may be extended to be on $\mathscr{L}^{e}(M)\cap \mathscr{L}^{m(1,0)}(M)$ by work from Fukaya, Seidel and Smith \cite{FukayaSeidelSmith2008i, FukayaSeidelSmith2008ii}, and from Nadler \cite{Nadler2009}. \par

\item[($d^\mathscr{F}=\gamma_\mathrm{ext}$):] This is a variant of the usual spectral norm, as defined in \cite{KislevShelukhin2018}, where it is also shown that it is a metric. We also have that $\mathscr{L}^\star(M)\subseteq \mathscr{L}^{L_0}(M)$ and $\mathscr{F}=\mathscr{F'}=\emptyset$, but we only ask that $L_0\in \mathscr{L}^{we}(M)$. However, $M$ has then to be closed and monotone, i.e.\ the diagonal of $(M\times M,\omega\oplus -\omega)$ is in $\mathscr{L}^{m(\rho,d)}(M\times M)$. \par

\item[($d^\mathscr{F}=d^\mathscr{F}_\mathcal{S}$):] These are the shadow pseudometrics appearing in work of Biran, Cornea and Shelukhin \cite{CorneaShelukhin2019, BiranCorneaShelukhin2018}. It is then understood that either $\mathscr{L}^\star(M)\subseteq \mathscr{L}^{e}(M)$, $\mathscr{L}^\star(M)\subseteq \mathscr{L}^{we}(M)$ or that $\mathscr{L}^\star(M)\subseteq \mathscr{L}^{m(\rho,d)}(M)$ for some $\rho>0$ and $d\in\Z_2$. Note that these will in general be degenerate. \par

\item[($d^\mathscr{F}=s^\mathscr{F}_\mathrm{alg}$):] These are the so-called algebraic fragmentation pseudometrics also appearing in work of Biran, Cornea and Shelukhin \cite{BiranCorneaShelukhin2018}. As above, it is then understood that $\mathscr{L}^\star(M)$ is in either $\mathscr{L}^{e}(M)$, $\mathscr{L}^{we}(M)$ or some $\mathscr{L}^{m(\rho,d)}(M)$. They might also be degenerated. \par

\item[($d^\mathscr{F}=D^\mathscr{F}$):] There are possibly many other weighted fragmentation pseudometrics ---~as defined by Biran, Cornea and Zhang \cite{BiranCorneaZhang2021}~--- that belong to this class. \par
\end{itemize}

We use here the words pseudometric and metric in the less restrictive sense where it may take infinite values.  \par

In all of those cases, the $J$-adapted property tying these pseudometrics to the almost complex structure $J$ is the following. For any $L,L'\in \mathscr{L}^\star(M)$ intersecting transversally and any point $x\in L$, there exists a $J$-holomorphic polygon $u$ passing through $x$ such that
\begin{align*}
\omega(u)\leq 2d^\mathscr{F}(L,L').
\end{align*}
Furthermore, $u$ has boundary along $L$, $L'$, and along Lagrangian submanifolds in $\mathscr{F}$ (see \cite{Seidel2008} for a more detailed exposition on $J$-holomorphic polygons and on their role in symplectic topology). If the Lagrangian submanifolds above do not intersect transversally, then they should be replaced by arbitrarily small Hamiltonian perturbation. A more precise definition will be given in Subsection \ref{subsec:j-adapted}. \par

As noted above, weighted fragmentation pseudometrics such as $d^\mathscr{F}_\mathcal{S}$ and $s^\mathscr{F}_\mathrm{alg}$ are in general degenerate. Intuitively, this is because $D^\mathscr{F}(L,L')$ measures the size of a cone decomposition of $L$ in terms of $L'$ and elements of $\mathscr{F}$ in the appropriate derived Fukaya category $\operatorname{DFuk}^\star(M)$. Therefore, whenever $L,L'\in\mathscr{F}$, we have that $D^\mathscr{F}(L,L')=0$. As noted by Biran, Cornea and Shelukhin \cite{BiranCorneaShelukhin2018}, a way to get around this issue is to use another auxiliary family $\mathscr{F'}$. The family must be so that
\begin{align} \label{eqn:intersection}
\left(\overline{\bigcup_{F\in\mathscr{F}}F}\right)\bigcap\left(\overline{\bigcup_{F'\in\mathscr{F'}}F'}\right)
\end{align}
is disconnected enough, e.g.\ discrete or totally disconnected. Indeed, under the later connectivity assumption, $\tilde{D}^{\mathscr{F},\mathscr{F'}}:=\max\{D^\mathscr{F},D^\mathscr{F'}\}$ is a true metric in the shadow and algebraic fragmentation cases. \par

Based on this phenomenon, for any families $\mathscr{F}$ and $\mathscr{F'}$ such that the intersection (\ref{eqn:intersection}) is discrete, we will call $\hat{d}^{\mathscr{F},\mathscr{F'}}:=\max\{d^\mathscr{F},d^\mathscr{F'}\}$ a \emph{$J$-adapted metric}. As we will see soon, the name is warranted: $\hat{d}^{\mathscr{F},\mathscr{F'}}$ will indeed be a metric on the appropriate space. \par

\subsection{A conjecture on convergence in Lagrangian spaces} \label{subsec:conjecture}
We now introduce a conjecture due to Cornea and explain how we will show it holds under some additional assumptions in high dimensions and without them in dimension 2. \par

We fix a connected symplectic manifold $(M,\omega)$ which is closed or convex at infinity. We also fix an $\omega$-compatible almost complex structure $J$ such that $g_J$ has uniformly bounded sectional curvature and with injectivity radius uniformly bounded away from zero. We also require that $(M,g_J)$ be a complete Riemannian manifold. Note that a symplectic manifold that is convex at infinity always admits such a $J$. This has been proven for the case when $M$ is a twisted cotangent bundle by Cieliebak, Ginzburg and Kerman \cite{CieliebakGinzburgKerman2004}, following a suggestion of Sikorav. As noted there, the proof easily adapts to the case when $M$ is instead convex at infinity. \par

We also fix a collection of Lagrangian submanifold $\mathscr{L}^\star(M)$ and a $J$-adapted metric $\hat{d}^{\mathscr{F},\mathscr{F'}}$ on it. By the examples in the previous subsection, we have multiple choices of appropriate collections $\mathscr{L}^\star(M)$ and metrics $\hat{d}^{\mathscr{F},\mathscr{F'}}$. \par

\begin{conj*}[Cornea, 2018]
The topology induced by the Hausdorff metric on $\mathscr{L}^\star_{\Lambda}(M)$ is stronger than the one induced by the $J$-adapted metric $\hat{d}^{\mathscr{F},\mathscr{F}'}$. In other words, if $L_n\to L_0$ in $\hat{d}^{\mathscr{F},\mathscr{F}'}$, then $L_n\to L_0$ in the Hausdorff metric $\delta_H$ induced by $g_J$.
\end{conj*}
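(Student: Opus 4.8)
The plan is to argue by contradiction, extracting from a counterexample a sequence of points witnessing a "hole" in the Hausdorff convergence, and then deriving a lower bound on the $J$-adapted distance via the monotonicity lemma that contradicts $\hat d^{\mathscr{F},\mathscr{F}'}(L_n,L_0)\to 0$. Concretely, suppose $L_n\to L_0$ in $\hat d^{\mathscr{F},\mathscr{F}'}$ but not in $\delta_H$. Since all $L_n$ lie in $\mathscr{L}^\star_\Lambda(M)$ (and, in the high-dimensional case, are also $\epsilon$-tame), after passing to a subsequence there is a fixed $r>0$ and, for each $n$, a point witnessing $\delta_H(L_n,L_0)\ge r$. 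There are two ways this can happen: either some $x_n\in L_0$ is $r$-far from $L_n$, or some $y_n\in L_n$ is $r$-far from $L_0$. The first alternative I would rule out (or handle) using a Lagrangian neighbourhood / graph-type argument near the embedded $L_0$, together with the curvature and tameness bounds, which force the $L_n$ to fill out a whole neighbourhood of $L_0$ once they are symplectically close; so the essential case is the second one: a point $y_n\in L_n$ sitting at definite distance $r$ from $L_0$.

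Given such a $y_n$, I would apply a $J$-adapted pseudometric's defining property: after an arbitrarily small Hamiltonian perturbation making $L_n$ and $L_0$ transverse, there is a $J$-holomorphic polygon $u_n$ through $y_n$ with boundary on $L_n$, $L_0$ and the fixed families $\mathscr{F},\mathscr{F}'$, and with $\omega(u_n)\le 2\hat d^{\mathscr{F},\mathscr{F}'}(L_n,L_0)$ (one uses whichever of $d^{\mathscr{F}},d^{\mathscr{F}'}$ is the max, and that the polygon's boundary avoids the relevant family by the discreteness of the intersection \eqref{eqn:intersection}, choosing $y_n$ away from that discrete set). Now restrict $u_n$ to the ball $B(y_n,\rho)$ in $(M,g_J)$ for a radius $\rho$ smaller than: the injectivity radius lower bound, a scale controlled by the sectional curvature bound, the distance $r$ from $y_n$ to $L_0$, and a scale controlled by $\Lambda$ and $\epsilon$ ensuring that inside $B(y_n,\rho)$ the only Lagrangian boundary condition $u_n$ can meet is (the perturbation of) $L_n$ itself and that this piece of $L_n$ looks like a graph/is connected there. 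On this ball the monotonicity lemma for $J$-holomorphic curves with boundary (the version stated in the earlier part of the paper, valid because the boundary lies on a Lagrangian with second fundamental form $\le\Lambda$) gives a uniform lower bound $\omega(u_n\cap B(y_n,\rho))\ge c(\rho,\Lambda,\epsilon,\text{curv},\text{inj})>0$ independent of $n$.

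Combining, $2\hat d^{\mathscr{F},\mathscr{F}'}(L_n,L_0)\ge\omega(u_n)\ge c>0$ for all $n$, contradicting convergence. The two-dimensional case is cleaner because $\epsilon$-tameness is not needed: when $\dim L=1,\dim M=2$, the bound $\|B_{L_n}\|\le\Lambda$ already controls the geodesic curvature, so a curve of bounded curvature cannot re-enter a small ball in a complicated way, and one gets the needed "local graph" property for free; this is why the statement holds without the tameness hypothesis in that dimension.

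The main obstacle I expect is the bookkeeping around the auxiliary families: one must ensure the $J$-holomorphic polygon supplied by the $J$-adapted property can be taken to pass through a point $y_n$ whose distance-$\rho$ ball is disjoint from $\overline{\bigcup_{F\in\mathscr{F}}F}\cup\overline{\bigcup_{F'\in\mathscr{F}'}F'}$, so that inside that ball $u_n$ has boundary only on $L_n$ (and $L_0$, which is $r$-far and hence also absent) — this is where the discreteness of the intersection \eqref{eqn:intersection} is used, and it requires knowing that the "bad" set $y_n$ must avoid is small compared to the portion of $L_n$ near $y_n$. A secondary technical point is making the choice of $\rho$ genuinely uniform in $n$: every ingredient (injectivity radius, curvature, $\Lambda$, $\epsilon$) is uniform by hypothesis, so this is a matter of carefully tracking constants, but it is the crux of why the hypotheses were set up as they are. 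Finally, justifying that the first alternative ($x_n\in L_0$ far from $L_n$) cannot occur — equivalently, that symplectic smallness plus Riemannian bounds force $L_n$ to be $C^0$-close to $L_0$ from the $L_0$ side as well — needs the Lagrangian neighbourhood theorem together with a compactness argument for graphs of bounded curvature, and should be isolated as its own lemma.
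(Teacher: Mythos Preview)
Your overall architecture---contradiction, extract a witnessing point, produce a $J$-holomorphic polygon through it, apply monotonicity on a small ball---matches the paper's, but there is a genuine gap in how you handle the auxiliary families $\mathscr{F},\mathscr{F}'$. You propose to choose $y_n$ so that $B(y_n,\rho)$ is disjoint from $\overline{\cup F}\cup\overline{\cup F'}$, invoking the discreteness of $(\overline{\cup F})\cap(\overline{\cup F'})$; but discreteness of the \emph{intersection} says nothing about the \emph{union}, which may well be all of $M$. Consequently, using ``whichever of $d^{\mathscr{F}},d^{\mathscr{F}'}$ is the max'' gives you a single polygon whose boundary lies on elements of, say, $\mathscr{F}$, and there is no way to guarantee your ball avoids those $F_i$'s. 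The paper's move is different: run the monotonicity argument for \emph{each} pseudometric separately, accepting that the admissible radius at $y_n$ is only $d_M(y_n,L_0\cup(\cup F))$, so that one obtains
\[
d^{\mathscr{F}}(L_n,L_0)\ \ge\ C\, s\big(L_n,\ L_0\cup(\textstyle\cup F)\big)^2,
\qquad
d^{\mathscr{F}'}(L_n,L_0)\ \ge\ C\, s\big(L_n,\ L_0\cup(\textstyle\cup F')\big)^2.
\]
These two together force $L_n$ into a small neighbourhood of $L_0\cup\big((\overline{\cup F})\cap(\overline{\cup F'})\big)$; only \emph{now} does discreteness enter, combined with the fact that $\|B_{L_n}\|\le\Lambda$ prevents the closed connected $L_n$ from being trapped in a small ball about an isolated point (this is Corollary~\ref{cor:bound_2nd}), so $L_n$ must in fact lie near $L_0$.

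A secondary point: your asymmetric treatment of the two Hausdorff directions---a Weinstein/graph argument for $s(L_0,L_n)$, monotonicity for $s(L_n,L_0)$---is both unnecessary and circular as stated, since to place $L_n$ in a Weinstein neighbourhood of $L_0$ you would already need the Hausdorff closeness you are trying to prove. The paper handles both directions by the same mechanism: the $J$-adapted property supplies a polygon through any point of $\widetilde{L}_0\cup\widetilde{L}_n$, so the identical monotonicity bound applies with the roles of $L_0$ and $L_n$ interchanged, yielding the analogous inequalities for $s(L_0,L_n\cup(\cup F))$ and $s(L_0,L_n\cup(\cup F'))$.
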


\begin{rem} \label{rem:generalization}
The conjecture was originally stated for the weighted fragmentation metrics appearing in \cite{BiranCorneaShelukhin2018}. However, the proof lends itself naturally to a generalization to a larger class of metrics behaving well with respect to $J$-holomorphic curves.
\end{rem}

Note that the conjecture implies that, for $n$ large, $L_n$ is homeomorphic to $L_0$. Indeed,  we have the inequality
\begin{align*}
\delta_{GH}(L_0,L_n)\leq \delta_H(L_0,L_n),
\end{align*}
where $\delta_{GH}$ denotes the Gromov-Hausdorff pseudometric on the collection of compact metric spaces (see \cite{BuragoBuragoIvanov2001} for a thorough exposition of the subject). Therefore, $\{L_n\}_{n\geq 1}$ also converges to $L_0$ in the Gromov-Hausdorff pseudometric. However, since $M$ has bounded sectional curvature and $\{L_n\}$ has uniformly bounded second fundamental form, $\{L_n\}$ has uniformly bounded sectional curvature by Gauss' equation. In particular, $\{L_n\}$ is a sequence of Alexandrov spaces with curvature uniformly bounded from below. Thus, by Perelman's stability theorem \cite{Perelman1991}, $L_n$ is homeomorphic to $L_0$ for $n$ large. \par 

Actually, we expect $L_n$ to be Hamiltonian isotopic to $L_0$ for $n$ large, and we know that to be true in some cases. Indeed, since $\{L_n\}_{n\geq 1}$ converges to $L_0$ in the Hausdorff metric, $L_n$ is eventually in a Weinstein neighborhood of $L_0$. Therefore, if the nearby Lagrangian conjecture holds for $L_0$, and if each $L_n$ is exact in the Weinstein neighborhood, e.g.\ it is exact in $M$ or simply connected, then $L_n$ has to be Hamiltonian isotopic to $L_0$. In particular, this is the case when $\{L_n\}\subseteq \mathscr{L}^e_{\Lambda}(M)$ and $\dim M=2$. \par

The main purpose of this paper is to prove the conjecture under the slightly stronger hypothesis that the Lagrangian submanifolds are also $\epsilon$-tame. \par

\begin{thmout} \label{thm:main}
Let $\{L_n\}_{n\geq 1}\subseteq\mathscr{L}^{\star}_{\Lambda,\epsilon}(M)$ be such that $L_n\to L_0\in\mathscr{L}^\star(M)$ with respect to a $J$-adapted metric $\hat{d}^{\mathscr{F},\mathscr{F}'}$. Then, $L_n\to L_0$ in the Hausdorff metric $\delta_H$ induced by $g_J$. \par

Moreover, when $\dim M=2$, the statement where $\mathscr{L}^{\star}_{\Lambda,\epsilon}(M)$ is replaced by $\mathscr{L}^{\star}_{\Lambda}(M)$ holds, i.e.\ the conjecture holds as initially stated in dimension 2.
\end{thmout}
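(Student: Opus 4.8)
The plan is to argue by contradiction. Suppose $L_n \to L_0$ with respect to $\hat d^{\mathscr F,\mathscr F'}$ but $L_n \not\to L_0$ in the Hausdorff metric $\delta_H$. Since the Lagrangian submanifolds are closed and $M$ is complete, after passing to a subsequence we may assume there is a fixed $r_0 > 0$ and points $x_n \in L_n$ with $d_M(x_n, L_0) \geq r_0$ for all $n$ (if instead the failure is that $L_0$ is not contained in the $r_0$-neighborhood of $L_n$, the symmetric argument applies, using that $L_0$ is embedded). The key is then to produce, from the $J$-adapted property, a $J$-holomorphic polygon through $x_n$ whose symplectic area is bounded above by $2\,\hat d^{\mathscr F,\mathscr F'}(L_n, L_0) \to 0$, and simultaneously to bound that area \emph{below} by a positive constant independent of $n$ via a monotonicity argument applied on a well-chosen ball.

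The core step is the monotonicity estimate. Around $x_n$, choose a metric ball $B = B(x_n, \rho)$ in $(M, g_J)$ with $\rho$ a fixed fraction of $r_0$ and also smaller than a uniform constant dictated by the sectional curvature bound and injectivity radius bound on $(M, g_J)$; then the standard monotonicity lemma for $J$-holomorphic curves (with boundary) applies on $B$ with a uniform constant $C > 0$, giving $\omega(u \cap B) \geq C\rho^2$ for any $J$-holomorphic polygon $u$ through $x_n$, \emph{provided} the portion of $u$ inside $B$ is ``trapped'' — i.e.\ $u$ cannot escape $B$ through its boundary faces lying on $L_n$, $L_0$, or members of $\mathscr F \cup \mathscr F'$ without first acquiring the requisite area. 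This is where the hypotheses $\|B_{L_n}\| \leq \Lambda$ and $\epsilon$-tameness are essential: they guarantee that near $x_n$ the submanifold $L_n$ looks uniformly flat at scale $\rho$ (the second fundamental form bound controls the extrinsic geometry, and tameness prevents a far-away sheet of $L_n$, intrinsically distant but extrinsically close, from folding back into $B$), so $L_n \cap B$ is a single almost-flat disk and one can run the version of monotonicity for curves with Lagrangian boundary on it. Since $x_n$ is at distance $\geq r_0 \geq 2\rho$ from $L_0$, the face of $\del u$ on $L_0$ never meets $B$; and one arranges, as in the $J$-adapted setup, that the auxiliary families $\mathscr F, \mathscr F'$ can be taken (after the small Hamiltonian perturbation) to avoid $B$, or else one absorbs them into the boundary-monotonicity bookkeeping.

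Combining the two bounds gives $C\rho^2 \leq \omega(u) \leq 2\,\hat d^{\mathscr F,\mathscr F'}(L_n,L_0) \to 0$ with $C, \rho$ fixed, a contradiction; hence $L_n \to L_0$ in $\delta_H$. For $\dim M = 2$, the norm of the second fundamental form is just the supremum of the geodesic curvature of the curve $L_n$, and a uniform bound on geodesic curvature together with closedness of $L_n$ and the bounded geometry of the ambient surface \emph{automatically} forces a uniform tameness constant: a curve of bounded curvature in a surface of bounded geometry cannot come intrinsically far yet extrinsically close to itself without violating the curvature bound, by an elementary comparison-geometry argument (a short arc of bounded geodesic curvature stays in a tube of controlled radius, and two such arcs cannot be $g_J$-close without being $d_{L_n}$-close). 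So in dimension $2$ one may drop the $\epsilon$-tameness hypothesis, recovering the conjecture as stated.

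I expect the main obstacle to be the trapping/boundary-monotonicity step: making precise that, under $\|B_L\| \leq \Lambda$ and $\epsilon$-tameness, the intersection $L_n \cap B(x_n,\rho)$ is a single graphical sheet over which the monotonicity lemma for $J$-holomorphic polygons with Lagrangian boundary genuinely applies with a constant uniform in $n$. In particular one must choose $\rho$ carefully — small enough in terms of $\Lambda$, $\epsilon$, the curvature bound, and the injectivity radius, but still a fixed fraction of $r_0$ — so that no escape of $u$ through the $L_n$-face is possible below the area threshold; this is the ``carefully-chosen metric ball'' alluded to in the abstract, and verifying the requisite quantitative reflection/boundary estimate is the technical heart of the argument.
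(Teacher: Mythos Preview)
Your overall architecture --- contradiction, extract a $J$-holomorphic polygon through a point far from $L_0$, and bound its area from below by monotonicity on a small ball --- is the same as the paper's. The role you assign to the second fundamental form bound and to $\epsilon$-tameness (controlling $L_n\cap B$ so that the boundary version of monotonicity applies with a uniform constant) is also correct in spirit; in the paper this enters through an isoperimetric inequality whose constant depends on $\epsilon$.

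There is, however, a genuine gap in how you dispose of the auxiliary families $\mathscr{F},\mathscr{F}'$. You cannot ``arrange'' the $F_i$ to avoid $B(x_n,\rho)$: the $F_i$ are dictated by the $J$-adapted pseudometric, the allowed perturbations are small, and nothing prevents some $F_i$ from passing near $x_n$. Nor can you ``absorb them into the boundary-monotonicity bookkeeping'': the $F_i$ are not assumed to lie in $\mathscr{L}^\star_{\Lambda,\epsilon}(M)$, so the boundary monotonicity constant blows up on them. What the paper actually proves is only $d^{\mathscr{F}}(L,L')\geq C\,s\bigl(L,\,L'\cup(\cup F)\bigr)^2$, and likewise for $\mathscr{F}'$. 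Converting this into a bound on $\delta_H(L,L')$ uses both families simultaneously together with the discreteness of $(\overline{\cup F})\cap(\overline{\cup F'})$: since $L$ lies in a small neighborhood of $L'\cup((\overline{\cup F})\cap(\overline{\cup F'}))$, connectedness of $L$ plus the fact that $L$ cannot fit in a small ball (a consequence of the bound on $\|B_L\|$) forces $L$ into a neighborhood of $L'$ alone. This step is not optional and your proposal does not supply it.

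The dimension-$2$ reduction is where your proposal actually fails. Your claim that a uniform bound on geodesic curvature forces a uniform tameness constant is false. Take in $\R^2$ the ``pinched racetrack'': two graphs $y=\pm f(x)$ on $[-L,L]$ with $f(0)=\delta/2$, $f(\pm L)=r$, $f'(\pm L)=0$, closed off by semicircles of radius $r$. Choosing $f$ to vary slowly (so $f''=O(L^{-2})$) and $L$ large, the curvature is bounded by $1/r$ everywhere, yet the two points $(0,\pm\delta/2)$ are at ambient distance $\delta$ (arbitrarily small) and intrinsic distance $\sim 2L\geq 1$. So for fixed $\Lambda=1/r$ the tameness constant can be made as small as one likes. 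The paper's own closing remark acknowledges that curvature bounds cannot prevent $\epsilon$ from being small at a given point, and its actual two-dimensional argument is entirely different: rather than recovering tameness, it shows (via a ``moon in a puddle'' type argument using osculating disks) that the image $u(\operatorname{int}S_r)$ must contain a full metric ball of radius $\rho_0=\rho_0(K_0,r_0,\Lambda,d_M(x,\cup_iK_i))$, and then applies the \emph{absolute} (boundary-free) monotonicity lemma on that ball. This is the technical heart of the $\dim M=2$ case, and it does not go through tameness at all.
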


The idea of the proof is as follows:
\begin{enumerate}[label=(\roman*)]
\item Since $\hat{d}^{\mathscr{F},\mathscr{F'}}$ is a $J$-adapted metric, for any $x\in L_0-(L_n\cup(\cup F))$ and $x'\in L_n-(L_0\cup(\cup F))$, there exist $J$-holomorphic polygons $u$ and $u'$ passing through $x$ and $x'$, respectively ---~modulo arbitrarily small perturbations. Furthermore, their area is bounded from above by $2\hat{d}^{\mathscr{F},\mathscr{F'}}(L_n,L_0)$. \par

\item By finding appropriate metric balls in $M$ and using a version of the monotonicity lemma, it is possible to find a lower bound for the area of these polygons for $n$ large. This bound depends only on $M$, $\Lambda$, $\epsilon$, and the distances $d_M(x,L_n\cup(\cup F))$, $d_M(x,L_n\cup(\cup F'))$, $d_M(x',L_0\cup(\cup F))$ and $d_M(x',L_0\cup(\cup F))$. By the previous step, this turns into a lower bound of $\hat{d}^{\mathscr{F},\mathscr{F'}}(L_n,L_0)$. \par

\item Using the fact that $(\overline{\cup F})\cap(\overline{\cup F'})$ is discrete, it is possible to turn the dependence on the different distances onto one on the Hausdorff distance $\delta_H(L_n,L_0)$. \par

\item The fact that $\hat{d}^{\mathscr{F},\mathscr{F'}}(L_n,L_0)\to 0$ then forces that $\delta_H(L_n,L_0)\to 0$.
\end{enumerate}

We can get rid of the dependence on $\epsilon$ when $\dim M=2$ because, in that case, we can make a better choice of metric balls on which the monotonicity lemma is applied. \par

\subsection{Structure of the paper} \label{subsec:structure}
The remainder of the paper is almost entirely dedicated to the proof of Theorem \ref{thm:main}. \par

In Section \ref{sec:proof-main}, we give a proper definition of $J$-adapted metrics. We then give the proof of the main theorem without any restriction on the dimension of the symplectic manifold $M$. This is done in two steps: proving an appropriate version of the monotonicity lemma, and showing that the existence of an appropriate $J$-holomorphic polygon implies the result. \par

In Section \ref{sec:proof-dim2}, we explain how to modify the proof of Section \ref{sec:proof-main} to get rid of $\epsilon$ when $\dim M=2$. Essentially, it suffices to modify the ball on which we apply the monotonicity lemma. To find such a ball, we develop some combinatoric arguments for curves on surfaces. This section ends with some proofs of results from Riemannian geometry that we have not found explicited in the literature. \par

In Section \ref{sec:bad}, we end the paper with an example of a sequence of Lagrangian submanifolds that do not respect uniform Riemannian bounds and do not respect the conclusion of Theorem \ref{thm:main}. This shows the sufficiency of said bounds. \par

\begin{ack*}
This research is part of my PhD thesis and was financed by my NSERC scholarship. I could not thank enough my advisor, Octav Cornea, for the many fruitful discussions we've had throughout the research and writing process which led to this paper. I would also like to thank Egor Shelukhin and Jordan Payette for their many helpful comments and insights, especially regarding the two-dimensional proof of Theorem \ref{thm:main}.
\end{ack*}

\section{The general proof} \label{sec:proof-main}
In this section, we give the proof of Theorem \ref{thm:main} without restrictions on the dimension of $M$. Actually, we prove a more precise estimate on the relation between $\hat{d}^{\mathscr{F},\mathscr{F'}}$ and $\delta_H$. In what follows, the almost complex structure $J$ is fixed. We also fix constants $K_0,r_0>0$ such that the sectional curvature of $M$ respects $|K|\leq K_0$, and its injectivity radius respects $r_\mathrm{inj}(M)\geq r_0$. We always assume that $(M,g_J)$ is complete. \par

\begin{thm} \label{thm:main-precise}
For any $J$-adapted metric $\hat{d}^{\mathscr{F},\mathscr{F'}}$ on $\mathscr{L}^\star_{\Lambda,\epsilon}(M,g_J)$, there exist constants $R=R(K_0,r_0,\Lambda,\epsilon,(\overline{\cup F})\cap (\overline{\cup F'}))>0$ and $C=C(K_0,r_0,\epsilon)>0$ such that whenever $\hat{d}^{\mathscr{F},\mathscr{F'}}(L,L')<R$, then
\begin{align*}
\hat{d}^{\mathscr{F},\mathscr{F'}}(L,L')\geq C\delta_H(L,L')^2.
\end{align*}
\end{thm}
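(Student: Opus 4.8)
The plan is to establish the quadratic estimate by combining the defining property of $J$-adapted metrics with a suitable version of the monotonicity lemma for $J$-holomorphic curves. First I would set up the dictionary: given $L,L'$ that intersect transversally (perturbing by an arbitrarily small Hamiltonian isotopy if not, which changes $\hat d^{\mathscr{F},\mathscr{F}'}$ by an arbitrarily small amount and changes $\delta_H$ continuously), pick a point $x\in L$ realizing (up to a small amount) the Hausdorff distance $\delta_H(L,L')$ in the sense that $d_M(x,L')$ is close to $\delta_H(L,L')$; one may also arrange $x\notin \overline{\cup F}\cup\overline{\cup F'}$ using that this set is discrete and that $L$, being a closed connected manifold, is not contained in it once $\delta_H(L,L')$ is small (here is where the bound $R$ depending on $(\overline{\cup F})\cap(\overline{\cup F'})$ enters — we shrink $R$ so that a ball of the relevant radius around $x$ avoids the other family). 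By the $J$-adapted property there is a $J$-holomorphic polygon $u$ through $x$ with boundary on $L$, $L'$ and members of $\mathscr{F}$ (resp.\ $\mathscr{F}'$) and with $\omega(u)\le 2d^{\mathscr{F}}(L,L')\le 2\hat d^{\mathscr{F},\mathscr{F}'}(L,L')$.

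Next I would prove the monotonicity estimate on a carefully chosen ball. The point is that $u$ passes through $x$, and the part of $\partial u$ lying inside a small geodesic ball $B(x,\rho)$ lies on $L$, $L'$, or elements of the families; having shrunk $\rho$ below the distances from $x$ to $L'$ and to $\cup F$, $\cup F'$, the only boundary component of $u$ meeting $B(x,\rho)$ is the one on $L$. So inside $B(x,\rho)$ the curve $u$ looks like a $J$-holomorphic curve with free boundary on the single Lagrangian $L$ passing through a point of $L$. The classical monotonicity lemma for such curves (a minimal-surface-type argument: the area of the intersection with $B(x,\rho)$ grows at least like a constant times $\rho^2$) requires a bound on the geometry of the ambient space — supplied by $|K|\le K_0$ and $r_{\mathrm{inj}}(M)\ge r_0$ — and a bound on the second fundamental form of the boundary Lagrangian — supplied by $\|B_L\|\le\Lambda$. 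The $\epsilon$-tame hypothesis is what guarantees that within $B(x,\rho)$ the Lagrangian $L$ looks like a single graph-like sheet rather than many intrinsically-far-but-extrinsically-close sheets folding back, so that the reflection/doubling trick (or Allard-type boundary regularity, or the direct comparison argument à la Sikorav) applies with constants depending only on $K_0,r_0,\epsilon$. This yields $\omega(u)\ge \operatorname{Area}(u\cap B(x,\rho))\ge C'\rho^2$ for all $\rho$ up to some $\rho_{\max}$ comparable to $\min\{1,d_M(x,L')\}$ (the $\min$ with $1$ coming from the same clause in the tameness definition and from $r_0$), with $C'=C'(K_0,r_0,\epsilon)$.

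Combining, choose $\rho$ of order $\min\{1,d_M(x,L')\}$ and use $d_M(x,L')\ge \delta_H(L,L') - (\text{small})$; since we already know $\hat d^{\mathscr{F},\mathscr{F}'}(L,L')<R<1$, Step (ii) will give $\delta_H(L,L')$ small too once the estimate is in place — more precisely one runs the argument for both orderings $(L,L')$ and $(L',L)$ to control $\delta_H=\max\{\sup_{x\in L}d_M(x,L'),\sup_{x'\in L'}d_M(x',L)\}$, picking in each case the point realizing the relevant sup. This gives
\begin{align*}
2\hat d^{\mathscr{F},\mathscr{F}'}(L,L')\ \ge\ \omega(u)\ \ge\ C'\,\bigl(\tfrac12\delta_H(L,L')\bigr)^2,
\end{align*}
i.e.\ $\hat d^{\mathscr{F},\mathscr{F}'}(L,L')\ge C\,\delta_H(L,L')^2$ with $C=C(K_0,r_0,\epsilon)$, valid whenever the right-hand quantities are below the thresholds, which is exactly the condition $\hat d^{\mathscr{F},\mathscr{F}'}(L,L')<R$ with $R$ depending on $K_0,r_0,\Lambda,\epsilon$ and on how close points of $\overline{\cup F}$ and $\overline{\cup F'}$ can be — i.e.\ on the discrete set $(\overline{\cup F})\cap(\overline{\cup F'})$, whose isolated points must be avoided by the chosen balls.

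The main obstacle I expect is the monotonicity step with boundary: ensuring that the constant in the area lower bound genuinely depends only on $K_0,r_0,\epsilon$ (and not on finer features of $L$) requires care, because the $J$-holomorphic polygon has corners and several boundary arcs, and one must localize to a ball meeting only one smooth boundary arc and argue that the tameness + second-fundamental-form bounds make that arc behave, at scale $\rho$, like a totally geodesic boundary up to controlled error. Handling the interplay of the $\min\{1,\cdot\}$ truncations (in the tame condition, in the injectivity radius, and in the size of admissible balls) so that no constant secretly depends on $\Lambda$ in the final estimate for $C$ — while $R$ is allowed to depend on $\Lambda$ — is the delicate bookkeeping that the rest of Section \ref{sec:proof-main} presumably carries out.
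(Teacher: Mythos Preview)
Your overall architecture—extract a $J$-holomorphic polygon through a chosen point of $L$ via $J$-adaptedness, then apply a boundary monotonicity lemma on a ball meeting only the $L$-arc—is exactly the paper's. The gap is in how you dispose of the auxiliary families $\mathscr{F},\mathscr{F}'$. You assert that one may arrange $x\notin\overline{\cup F}\cup\overline{\cup F'}$ ``using that this set is discrete''; but it is not discrete—each $F\in\mathscr{F}$ is an $n$-dimensional Lagrangian. Only the \emph{intersection} $(\overline{\cup F})\cap(\overline{\cup F'})$ is assumed discrete. When you apply monotonicity to the $d^{\mathscr{F}}$-polygon, the ball $B(x,\rho)$ must avoid not only $L'$ but also the Lagrangians $F_1,\dots,F_k\in\mathscr{F}$ along which that polygon has boundary, and nothing forces the point $x$ realizing $s(L,L')$ to be far from $\cup_iF_i$. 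Your two requirements on $\rho$—below $d_M(x,\cup_iF_i)$ so that only the $L$-boundary is seen, yet of order $d_M(x,L')$ so that the conclusion involves $\delta_H$—are in general incompatible. What monotonicity actually yields is only
\[
d^{\mathscr{F}}(L,L')\ \ge\ C\,s\bigl(L,\ L'\cup(\textstyle\cup_{F\in\mathscr{F}}F)\bigr)^2,
\]
together with the analogous inequality for $d^{\mathscr{F}'}$.

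The missing idea, which the paper supplies, is to \emph{combine} these two one-sided estimates rather than try to reach $\delta_H$ directly. If $\hat d^{\mathscr{F},\mathscr{F}'}(L,L')$ is small then both $s(L,L'\cup(\cup F))$ and $s(L,L'\cup(\cup F'))$ are small, so $L$ lies in a thin neighborhood of $L'\cup\bigl((\overline{\cup F})\cap(\overline{\cup F'})\bigr)$. Only now is discreteness invoked: for $\sigma$ below the separation of that discrete set, this neighborhood splits as a tube around $L'$ together with isolated small balls; since $L$ is connected and, by $\|B_L\|\le\Lambda$ and the ambient curvature bound, cannot fit inside an arbitrarily small ball (this is precisely where $\Lambda$ enters $R$), $L$ must lie in the component containing $L'$, giving $s(L,L')\le 3\sigma$. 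Repeating with $L,L'$ swapped controls $s(L',L)$ and finishes the proof.
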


Clearly, the first part of Theorem \ref{thm:main} follows directly from Theorem \ref{thm:main-precise}. \par

\begin{rem} \label{rem:groman-solomon}
As explained above, the proof relies on a version of the monotonicity lemma. However, it was pointed out to us by Shelukhin that a modification of Groman--Solomon's reverse isoperimetric inequality \cite{GromanSolomon2014} would also yield a proof of Theorem \ref{thm:main}. Such a modification of the inequality has turned out to be much more difficult to prove than what ended up being presented here. Furthermore, additional Riemannian bounds seem then to be required, e.g.\ $C^1$ bounds on $B_L$. On the other hand, this would have the advantage of giving a linear inequality in Theorem \ref{thm:main-precise}, instead of a quadratic one. \par
\end{rem}

\subsection{\texorpdfstring{$J$}{J}-adapted metrics} \label{subsec:j-adapted}
Before going in the proof of Theorem \ref{thm:main-precise}, we need to give a precise definition of what we mean by $J$-adapted. We also explain how the metrics enumerated in the introduction fit in this definition. \par

We begin by clarifying what we mean by a $J$-holomorphic polygon, essentially following Seidel's book \cite{Seidel2008}. \par

\begin{defn*}
Let $(M,\omega)$ be a symplectic manifold equipped with a $\omega$-compatible almost complex structure $J$. Let $L_0,\dots, L_k$ be pairwise transverse Lagrangian submanifolds. Denote by $S_r$ the closed unit disk with $|r|\leq k+1$ punctures at its boundary. We equip $S_r$ with the standard complex structure $j$ and area form $\sigma$. We label the components of $\del S_r$ counterclockwise from $C_0$ to $C_k$, and the puncture from $\zeta_0$ to $\zeta_k$ accordingly. A \textbf{$J$-holomorphic polygon with boundary along $L_0,\dots,L_k$} is a smooth map $u:S_r\to M$ such that
\begin{enumerate}[label=(\roman*)]
\item $u$ is $(j,J)$-holomorphic;
\item $E(u):=\int_{S_r}|du|^2_{g_J}\sigma<\infty$;
\item $u(C_i)\subseteq L_i$ for all $i\in\{0,\dots,|r|-1\}$;
\item near $\zeta_i$, there are conformal coordinates $(s,t)\in [0,\infty)\times [0,1]$ such that $\lim_{s\to\infty} u(s,t)=:p_i\in L_{i-1}\cap L_i$.
\end{enumerate}
Furthermore, for some $x\in L_0-(\cup_{i=1}^k L_i)$, we will say that $u$ \textbf{passes through $x$} if $x\in u(C_0)$.
\end{defn*}

This allows us to define precisely what are $J$-adapted metrics. \par

\begin{defn*}
Let $\mathscr{L}^\star(M)$ be a collection of Lagrangian submanifolds on $M$. Let $\mathscr{F}\subseteq \mathscr{L}^\star(M)$. We say that a pseudometric $d^\mathscr{F}$ on $\mathscr{L}^\star(M)$ is \textbf{$J$-adapted} if for all $\delta>0$ and all $L, L'\in\mathscr{L}^\star(M)$, there exist Lagrangian submanifolds $F_1,\dots,F_k\in\mathscr{F}$ with the following property. \par

For any $C^0$- and Hofer-small Hamiltonian perturbations $\widetilde{L},\widetilde{L}',\widetilde{F}_1,\dots,\widetilde{F}_k$ of the Lagrangian submanifolds above making them pairwise transverse, and for any $x\in \widetilde{L}\cup\widetilde{L}'$, there exists a nonconstant $J$-holomorphic polygon $u:S_r\to M$ such that
\begin{enumerate}[label=(\roman*)]
\item has boundary along $\widetilde{L}$, $\widetilde{L}'$ and $\widetilde{F}_1,\dots,\widetilde{F}_k$; \par
\item passes through $x$; \par
\item respects the bound
\begin{align*}
\omega(u)\leq d^\mathscr{F}(L,L')+\delta.
\end{align*}
\end{enumerate}

Let $\mathscr{F'}\subseteq \mathscr{L}^\star(M)$ be such that
\begin{align*}
\left(\overline{\bigcup_{F\in\mathscr{F}}F}\right)\bigcap \left(\overline{\bigcup_{F'\in\mathscr{F'}}F'}\right)
\end{align*}
is discrete. We will call $\hat{d}^{\mathscr{F},\mathscr{F}'}:=\max\{d^\mathscr{F},d^\mathscr{F'}\}$ a \textbf{$J$-adapted metric} if $d^\mathscr{F}$ and $d^{\mathscr{F}'}$ are both $J$-adapted pseudometrics.
\end{defn*}

We conclude this subsection by explaining how the metrics mentioned above are indeed $J$-adapted metrics, at least when $J$ lies in some residual set. In the case when $\hat{d}^{\mathscr{F},\mathscr{F'}}$ is either a shadow metric $\hat{d}_\mathcal{S}^{\mathscr{F},\mathscr{F'}}$ or an algebraic one $\hat{s}_\mathrm{alg}^{\mathscr{F},\mathscr{F'}}$, this is proven in the course of Theorem 5.0.2 of \cite{BiranCorneaShelukhin2018}. By the inequality $\hat{d}_\mathcal{S}^{\emptyset,\emptyset}\leq d_H$, this also implies the result for the Lagrangian Hofer metric. \par

In fact, in the case of the Lagrangian Hofer metric, when the Lagrangian submanifolds involved are weakly exact, it is possible to get a sharper result. In that case, $d_H$ is not only $J$-adapted on $\mathscr{L}^{L_0}(M)$, but we may also take the $J$-holomorphic polygon $u$ appearing in the definition of a $J$-adapted metric to be a strip. Indeed, this appears as Corollaries 3.9 in \cite{BarraudCornea2006}. \par

The same is true of the spectral norm $\gamma$ and its variant $\gamma_\mathrm{ext}$ on $\mathscr{L}^{L_0}(M)$. This follows from the proof of Theorem E in \cite{KislevShelukhin2018}. When $M=T^*L_0$, and $\gamma$ is extended to $\mathscr{L}^{e}(T^*L_0)\cap \mathscr{L}^{m(1,0)}(T^*L_0)$, the same stay true. Indeed, the spectral norm between $L$ and $L'$ is then taken to be the usual one, but using different decorations on $L$ and $L'$. Therefore, the underlying manifolds are the same, and there is still a strip between them. \par

We remark that the polygons which are found in the proof of Theorem 5.0.2 of \cite{BiranCorneaShelukhin2018} and Theorem E in \cite{KislevShelukhin2018} are $J$-holomorphic for a very specific $J$. This choice is made to allow for the use Lelong's inequality. However, the same argument works perfectly well for any $J$ in an appropriate residual set. \par

\begin{rem} \label{rem:hbar}
When proving Theorem \ref{thm:main}, it is actually possible to remove the weak-exactness hypothesis on $\mathscr{L}^{L_0}(M)$ in the case of the spectral norm. Then $L_0$ can be monotone with any constant $\rho>0$ and $d\in\Z_2$. Indeed, the weak-exactness is only truly needed in order to prove the stronger Theorem \ref{thm:main-precise}. Denote by $\hbar(M,L,J)$ the minimal area of a $J$-holomorphic disk with boundary in $L$, or of a $J$-holomorphic sphere. Then, the strip found in the proof of Theorem E of \cite{KislevShelukhin2018} ---~and in Corollary 3.13 of \cite{BarraudCornea2006} for that matter~--- exists whenever $\gamma(L,L')<\hbar(M,L,J)$. As we will see below, this is enough to prove Theorem \ref{thm:main}. The same is true of the variant $\gamma_\mathrm{ext}$.
\end{rem}

\subsection{A version of the monotonicity lemma} \label{subsec:monotonicity}
We now prove a slightly improved version of Sikorav's monotonicity lemma for curves with Lagrangian boundary \cite{Sikorav1994}. The constants appearing in it will only depend on bounds coming from the Riemannian metric $g_J$ on $M$ and its restriction on Lagrangian submanifolds. This improvement follows from observations in Groman and Solomon \cite{GromanSolomon2016} on the dependence of the constant appearing in the isoperimetric inequality on these bounds. \par

Firstly, consider a loop $\gamma:\R/(2\pi\Z)\to B_{r_\mathrm{inj}(M)/2}(x)$, where $B_{r_\mathrm{inj}(M)/2}(x)$ is the metric ball in $M$ of radius $\frac{1}{2}r_\mathrm{inj}(M)$ centered at some $x\in M$. Denote by $a(\gamma)$ the area, with respect to $\omega$, of a disk extension of $\gamma$ contained in a metric ball $B_{r_\mathrm{inj}(M)/2}(y)$. Here, $y$ is not necessarily equal to $x$. This is of course independent on the choice of disk: if $u:\D\to B_{r_\mathrm{inj}(M)/2}(y)$ and $u':\D\to B_{r_\mathrm{inj}(M)/2}(y')$ are two such extensions, then gluing them along their common boundary gives a sphere $u\#\overline{u}$ in the ball $B_{r_\mathrm{inj}(M)}(\gamma(0))$. Such a sphere must be nullhomotopic. Therefore, $\omega(u)-\omega(u')=\omega(u\#\overline{u})=0$, and $a(\gamma)$ is well-defined. \par

Let $L\in\mathscr{L}^\star_{\Lambda,\epsilon}(M)$. Consider now an arc $\gamma:([0,\pi],\{0,\pi\})\to (M,L)$ with image in a metric ball $B_\delta(x)$ for some $x\in L$, where
\begin{align} \label{eqn:delta}
\delta:=\epsilon\min\left\{1,\frac{1}{2}r_\mathrm{inj}(M),\frac{1}{2}r_\mathrm{inj}(L)\right\}.
\end{align}
Take a path $\alpha:[0,\pi]\to B^L_{\delta/\epsilon}(y)$ such that $\gamma(\pi i)=\alpha(\pi i)$ for each $i\in\{0,1\}$. Here, $B^L_{\delta/\epsilon}(y)$ denotes the metric ball in $L$, i.e.\ with respect to $d_L$, of radius $\frac{\delta}{\epsilon}$ centered at $y\in L$. We then define $a(\gamma)$ to be the area, again with respect to $\omega$, of a disk extension of $\gamma\#\overline{\alpha}$ in the ball $B_\delta(y)$, i.e.\ $a(\gamma)=a(\gamma\#\overline{\alpha})$ as a loop. Here, $\overline{\alpha}(\theta):=\alpha(\pi-\theta)$ for all $\theta\in[0,\pi]$. \par

Note that such a path $\alpha$ always exists, since $d_M(x,\gamma(\pi i))<\epsilon$. Thus, it must be so that $d_L(x,\gamma(\pi i))<\min\{1,\frac{\delta}{\epsilon}\}$ if the tameness condition is to be fulfilled. We now show that $a(\gamma)$ is well-defined. Take $\alpha:[0,\pi]\to B^L_{\delta/\epsilon}(y)$ and $\alpha':[0,\pi]\to B^L_{\delta/\epsilon}(y')$ to be two paths such that $\alpha(\pi i)=\alpha'(\pi i)=\gamma(\pi i)$. Take $u:\D\to B_\delta(y)$ and $u':\D\to B_\delta(y')$ to be extensions of $\gamma\#\overline{\alpha}$ and $\gamma\#\overline{\alpha'}$, respectively. Then, gluing $u$ and $u'$ along $\gamma$ gives a disk $u\#\overline{u'}$ with boundary $\overline{\alpha}\#\alpha'$. But $\overline{\alpha}\#\alpha'$ is contained in $B^L_{2\delta/\epsilon}(\gamma(0))$. Since $\frac{2\delta}{\epsilon}\leq r_\mathrm{inj}(L)$, it must be a contractible loop. The homotopy from $\overline{\alpha}\#\alpha'$ to a point extends to a homotopy of $u\#\overline{u'}$ to a topological sphere in $B_{2\delta/\epsilon}(\gamma(0))$. Since $\frac{2\delta}{\epsilon}\leq r_\mathrm{inj}(M)$, this sphere must be nullhomotopic. Therefore, $\omega(u)-\omega(u')=\omega(u\#\overline{u'})=0$, and the definition of $a(\gamma)$ is again independent of choices. \par

\begin{lem}[Isoperimetric inequality] \label{lem:iso-inequality}
Let $M$, $L$ and $\delta$ be as above. There exist constants $c=c(K_0,r_0,\epsilon)>0$ and $c'=c'(K_0,r_0)>0$ such that
\begin{enumerate}[label=(\roman*)]
\item for all loops $\gamma$ with image in a metric ball $B_{r_0/2}(x)$ for some $x\in M$, we have that
\begin{align*}
a(\gamma)\leq c'\ell(\gamma)^2;
\end{align*}
\item for all arcs $\gamma$ with ends in $L$ and image in a metric ball $B_{\delta}(x)$ for some $x\in L$, we have that
\begin{align*}
a(\gamma)\leq c\ell(\gamma)^2.
\end{align*}
\end{enumerate}
Furthermore, $c'(K_0,r_0)\leq c(K_0,r_0,\epsilon)$.
\end{lem}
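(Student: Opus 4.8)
The plan is to fill the curves explicitly by geodesic cones and bound their $\omega$-area by their Riemannian area, using that $J$ is $\omega$-compatible --- so $|\omega(v,w)|\le|v|_{g_J}|w|_{g_J}$ --- together with Rauch comparison, which controls the distortion of the exponential map on a metric ball of radius $\rho<r_\mathrm{inj}(M)$ in terms of $\rho$ and $K_0$. The basic estimate used in both parts is this: if a closed curve $\gamma$ of length $\ell<r_\mathrm{inj}(M)$ bounds a disk inside a ball of radius $<r_\mathrm{inj}(M)$, then coning $\gamma$ from a point $p\in\gamma$ produces a disk inside $B_{\ell/2}(p)$ --- of radius $\le\tfrac12 r_\mathrm{inj}(M)$, hence a legitimate extension for computing $a(\gamma)$ --- and, writing $v(\theta)=\exp_p^{-1}(\gamma(\theta))$, one has $a(\gamma)\le\int_0^1\!\int_0^{2\pi}|u_\rho|_{g_J}|u_\theta|_{g_J}\,d\theta\,d\rho$ with $|u_\rho|_{g_J}\le C_1\,d_M(p,\gamma(\theta))\le\tfrac12 C_1\ell$ and $|u_\theta|_{g_J}\le C_1C_2\,\rho\,|\gamma'(\theta)|_{g_J}$, where $C_1,C_2$ depend only on $K_0$ and $\ell$; integrating gives $a(\gamma)\le\tfrac14 C_1^2C_2\,\ell^2$.

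For part (i), fix a loop $\gamma$ with image in $B_{r_0/2}(x)$. If $\ell(\gamma)<r_0$, the basic estimate with $p=\gamma(0)$ applies (its distortion constants being bounded in terms of $K_0,r_0$) and gives $a(\gamma)\le c'\,\ell(\gamma)^2$. If $\ell(\gamma)\ge r_0$, the same computation with $x$ in place of $\gamma(0)$ --- legitimate since $\gamma\subseteq B_{r_0/2}(x)$ --- yields the linear bound $a(\gamma)\lesssim_{K_0,r_0} r_0\,\ell(\gamma)\le\ell(\gamma)^2$. Enlarging $c'$ to cover both cases proves (i).

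For part (ii), the new ingredient is to close the arc $\gamma$ using $\epsilon$-tameness. If $\ell(\gamma)<\delta$, then $d_M(\gamma(0),\gamma(\pi))\le\ell(\gamma)<\delta\le\epsilon$, so $\epsilon$-tameness forces $d_L(\gamma(0),\gamma(\pi))\le\epsilon^{-1}d_M(\gamma(0),\gamma(\pi))\le\epsilon^{-1}\ell(\gamma)$; let $\alpha$ be the minimising $L$-geodesic between the endpoints --- an admissible closing path in the definition of $a(\gamma)$. Then $\ell(\gamma\#\overline{\alpha})\le(1+\epsilon^{-1})\ell(\gamma)<2\epsilon^{-1}\delta\le r_\mathrm{inj}(M)$, so the basic estimate applies to $\gamma\#\overline{\alpha}$ with basepoint $\gamma(0)$ (the working radius now being $\le\delta/\epsilon\le\min\{1,\tfrac12 r_\mathrm{inj}(M)\}$, so the relevant constants depend on $K_0$ alone), yielding $a(\gamma)=a(\gamma\#\overline{\alpha})\le 4c'\epsilon^{-2}\,\ell(\gamma)^2$. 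If instead $\ell(\gamma)\ge\delta$, close $\gamma$ by the path running along $L$ from $\gamma(0)$ to $x$ and then from $x$ to $\gamma(\pi)$ (of length $<2\delta/\epsilon$, contained in $B^L_{\delta/\epsilon}(x)$, as in the discussion before the lemma), so that $\gamma\#\overline{\alpha}$ lies in a ball of radius $\le\tfrac12 r_\mathrm{inj}(M)$; a Poincar\'e primitive $\lambda$ of $\omega$ there satisfies $\|\lambda\|_{C^0}\lesssim_{K_0}\delta/\epsilon$, whence $a(\gamma)=\left|\int_{\gamma\#\overline{\alpha}}\lambda\right|\lesssim_{K_0}\epsilon^{-1}\delta\,(\ell(\gamma)+2\epsilon^{-1}\delta)\lesssim_{K_0}\epsilon^{-2}\delta\,\ell(\gamma)\le\epsilon^{-2}\ell(\gamma)^2$, using $\ell(\gamma)\ge\delta$. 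Taking $c=\epsilon^{-2}$ times the constant of part (i) (enlarged to absorb the $\lesssim_{K_0}$ terms) proves (ii), and $c\ge c'$ follows at once from $\epsilon\le 1$.

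The main difficulty is the comparison-geometry input: arranging that the distortion of $\exp$, hence the isoperimetric constant, depends only on $K_0$ and $r_0$ --- the refinement of the classical inequality highlighted by Groman--Solomon. This requires keeping every working radius below $\min\{1,\tfrac12 r_\mathrm{inj}(M)\}$ and handling the threshold $\pi/\sqrt{K_0}$ in the Rauch estimates with care (e.g.\ separating the cases $\sec\le 0$ and $\sec>0$). The remaining, purely bookkeeping, difficulty --- specific to arcs --- is to verify that the closing path $\alpha$ and the cone over $\gamma\#\overline{\alpha}$ are legitimate disk extensions computing the choice-independent quantity $a(\gamma)$; this amounts to keeping every curve in a ball of radius below $r_\mathrm{inj}(M)$, which is precisely what the choice of $\delta$ in \eqref{eqn:delta} secures.
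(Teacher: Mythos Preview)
Your proof is correct and follows essentially the same route as the paper's: close the arc by the minimising $L$-geodesic $\alpha$ granted by $\epsilon$-tameness, cone $\gamma\#\overline{\alpha}$ from a basepoint via $\exp$, and bound the $\omega$-area through Gauss' lemma and Rauch comparison. The only difference is your case split on $\ell(\gamma)$, which the paper avoids by always coning from $\gamma(0)$ --- this cone automatically sits in a ball of radius $<r_\mathrm{inj}(M)$ regardless of $\ell(\gamma)$, and the radial estimate $|\partial_r u|=d_M(\gamma(0),(\gamma\#\overline{\alpha})(\theta))\le\epsilon^{-1}\ell(\gamma)$ holds unconditionally --- so your Poincar\'e-primitive treatment of the large-$\ell$ regime, while correct, is unnecessary.
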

\begin{pr*}
As noted by Groman and Solomon \cite{GromanSolomon2016}, the proof appearing in Remark 4.4.2 of \cite{McDuffSalamon2012} depends only on the constants above. This gives a proof of \textit{(i)}. We give here the details of the proof of \textit{(ii)}. \par

Take $\alpha$ to be the unique minimizing geodesic in $L$ from $\gamma(0)$ to $\gamma(\pi)$; it exists since $d_L(\gamma(0),\gamma(\pi))<\frac{2\delta}{\epsilon}\leq r_\mathrm{inj}(L)$. We define an extension $u:\D\to M$ of $\gamma\#\overline{\alpha}$ by
\begin{align*}
u(re^{i\theta}):=\exp_{\gamma(0)}(r\xi(\theta)),
\end{align*}
where $\exp_{\gamma(0)}(\xi(\theta)):=(\gamma\#\overline{\alpha})(\theta)$ for all $\theta\in[0,2\pi]$. Although this disk might not be entirely contained in $B_{\delta}(x)$ ---~if $\delta$ is larger than the convexity radius of $M$ that is~--- it is contained in $B_{2\delta}(\gamma(0))$. Therefore, the same argument as above implies that $\omega(u)=a(\gamma)$. \par

By the Gauss lemma, we have that
\begin{align*}
|\del_r u|(re^{i\theta})=|(d\exp_{\gamma(0)})_{r\xi(\theta)}(\xi(\theta))|=|\xi(\theta)|=d_M(\gamma(0),(\gamma\#\overline{\alpha})(\theta)).
\end{align*}
However, for all $\theta\in[0,\pi]$, we have that
\begin{align*}
d_M(\gamma(0),(\gamma\#\overline{\alpha})(\theta))
&=d_M(\gamma(0),\gamma(\theta))\leq \ell(\gamma)
\intertext{and}
d_M(\gamma(0),(\gamma\#\overline{\alpha})(2\pi-\theta))
&=d_M(\alpha(0),\alpha(\theta))\leq \ell(\alpha) \\
&= d_L(\gamma(0),\gamma(\pi)) \leq \frac{1}{\epsilon}d_M(\gamma(0),\gamma(\pi)) \\
&\leq \frac{1}{\epsilon}\ell(\gamma).
\end{align*}
Therefore, we get that $|\del_r u|\leq \frac{1}{\epsilon}\ell(\gamma)$. \par

Likewise, by the Rauch comparison theorem, we have that
\begin{align*}
|\del_\theta u|(re^{i\theta})
&= |(d\exp_{\gamma(0)})_{r\xi(\theta)}(r\dot{\xi}(\theta))| \\
&\leq \frac{\sinh(r|\xi(\theta)|\sqrt{K_0})}{|\xi(\theta)|\sqrt{K_0}}|\dot{\xi}(\theta)| \\
&= \frac{\sinh(r|\xi(\theta)|\sqrt{K_0})}{|\xi(\theta)|\sqrt{K_0}}\left|(d\exp_{\gamma(0)})^{-1}_{\xi(\theta)}\left(\frac{d}{d\theta}(\gamma\#\overline{\alpha})(\theta)\right)\right| \\
&\leq \frac{\sinh(r|\xi(\theta)|\sqrt{K_0})}{\sin(|\xi(\theta)|\sqrt{K_0})}\left|\frac{d}{d\theta}(\gamma\#\overline{\alpha})(\theta)\right| \\
&\leq \frac{\sinh(r_0\sqrt{K_0})}{\sin(r_0\sqrt{K_0})}\left|\frac{d}{d\theta}(\gamma\#\overline{\alpha})(\theta)\right|.
\end{align*}

Therefore, we get that
\begin{align*}
a(\gamma)
&= \left|\int_0^{2\pi}\int_0^1\omega(\del_r u,\del_\theta u)rdrd\theta\right| \\
&\leq \int_0^{2\pi}\int_0^1|\del_r u||\del_\theta u|rdrd\theta \\
&\leq \frac{\sinh(r_0\sqrt{K_0})}{\epsilon\sin(r_0\sqrt{K_0})}\ell(\gamma)\int_0^{2\pi}\int_0^1 \left|\frac{d}{d\theta}(\gamma\#\overline{\alpha})(\theta)\right|rdrd\theta \\
&= \frac{\sinh(r_0\sqrt{K_0})}{2\epsilon\sin(r_0\sqrt{K_0})}\ell(\gamma)\left(\ell(\gamma)+\ell(\alpha)\right) \\
&\leq \left(1+\frac{1}{\epsilon}\right)\frac{\sinh(r_0\sqrt{K_0})}{2\epsilon\sin(r_0\sqrt{K_0})}\ell(\gamma)^2.
\end{align*}

This concludes to proof of \textit{(ii)}. To see that $c'\leq c$, one can notice that replacing $\ell(\alpha)$ by 0 in the above argument gives a proof of \textit{(i)} with $c'=\frac{\sinh(r_0\sqrt{K_0})}{2\sin(r_0\sqrt{K_0})}\leq c$.
\end{pr*}

Note that $\delta$ (as defined in (\ref{eqn:delta})) can always be bounded away from zero by a constant depending only on $K_0$, $r_0$, and $\Lambda$, as shows the following lemma. \par

\begin{lem}[\cite{GromanSolomon2016}] \label{lem:inj-rad}
Let $(M,g)$ be a complete Riemannian manifold with sectional curvature $K$ such that $|K|\leq K_0$, and with injectivity radius such that $r_\mathrm{inj}(M)\geq r_0>0$. Let $L$ be a submanifold with second fundamental form $B_L$ such that $||B_L||\leq \Lambda$ for some $\Lambda\geq 0$. Then, there exists a constant $i_0=i_0(K_0,r_0,\Lambda)>0$ such that
\begin{align*}
r_\mathrm{inj}(L,g|_L)\geq i_0.
\end{align*}
\end{lem}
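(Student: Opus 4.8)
The plan is to derive the bound from three standard ingredients: the Gauss equation, the Klingenberg injectivity radius estimate, and a Fenchel-type total curvature inequality. This is essentially the argument of Groman--Solomon \cite{GromanSolomon2016}, so I will only indicate the steps. Since the Lagrangian submanifolds we consider are closed manifolds, $L$ is compact, hence complete in the induced metric $g|_L$, and Klingenberg's lemma applies:
\[
r_\mathrm{inj}(L,g|_L)\geq \min\left\{r_\mathrm{conj}(L),\tfrac{1}{2}\ell_0(L)\right\},
\]
where $r_\mathrm{conj}(L)$ denotes the conjugate radius of $L$ and $\ell_0(L)$ the length of the shortest closed geodesic of $L$ (set to $+\infty$ if $L$ carries none). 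It thus suffices to bound each of the two quantities on the right from below by a positive constant depending only on $K_0$, $r_0$, and $\Lambda$.

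For the conjugate radius, I would first invoke the Gauss equation: for an orthonormal pair $X,Y\in T_xL$ spanning a $2$-plane $\sigma$,
\[
K_L(\sigma)=K_M(\sigma)+\langle B_L(X,X),B_L(Y,Y)\rangle-|B_L(X,Y)|^2,
\]
so that $|K_L|\leq K_0+c\,\Lambda^2=:K_1$ for a universal constant $c$ (using that $|B_L(X,Y)|\leq\Lambda$ for unit $X,Y$, which follows from the definition of $\|B_L\|$). The upper bound $K_L\leq K_1$ and the Rauch comparison theorem then give $r_\mathrm{conj}(L)\geq \pi/\sqrt{K_1}$, interpreted as $+\infty$ when $K_1=0$; this is a bound of the required form.

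For the shortest closed geodesic, let $\gamma$ be a closed geodesic of $L$, parametrized by arc length, of length $\ell$. If $\ell\geq\min\{1,r_0\}$ there is nothing to prove, so assume $\ell<\min\{1,r_0\}$. Then $\gamma$ is contained in the ball $B_{\ell/2}(\gamma(0))\subseteq B_{r_0/2}(\gamma(0))$, which lies inside a normal coordinate chart of $M$ centred at $\gamma(0)$. As $\gamma$ is a geodesic of $L$, its acceleration in $M$ equals $\nabla^M_{\dot\gamma}\dot\gamma=B_L(\dot\gamma,\dot\gamma)$, of norm at most $\Lambda$; writing this in the normal chart and using the standard estimates $|\Gamma|\leq C(K_0)\,d_M(\cdot,\gamma(0))\leq C(K_0)\,\ell$ on the Christoffel symbols together with the two-sided comparison of $g_J$ with the Euclidean metric on the chart, one obtains that the Euclidean total curvature of the closed coordinate curve $\gamma$ is at most $C(K_0)(\Lambda+\ell)\,\ell$. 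By Fenchel's theorem this total curvature is at least $2\pi$, so $2\pi\leq C(K_0)(\Lambda+1)\,\ell$ and $\ell\geq 2\pi/(C(K_0)(\Lambda+1))$. Combining the two cases yields $\ell_0(L)\geq\min\{1,r_0,2\pi/(C(K_0)(\Lambda+1))\}>0$, and with the conjugate radius bound this produces the constant $i_0(K_0,r_0,\Lambda)>0$.

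The main technical point is the total curvature estimate for $\gamma$: one must check that passing to normal coordinates costs only constants controlled by $K_0$ and $r_0$ — this is where the completeness of $(M,g_J)$ and the lower bound $r_0$ on the injectivity radius are used, to guarantee a normal chart of definite size with controlled metric and Christoffel symbols — and that Fenchel's inequality is applied to a genuine closed $C^2$ curve, which is automatic here since a closed geodesic of $L$ is smooth and closes up smoothly. As noted above, the full details are carried out in \cite{GromanSolomon2016}.
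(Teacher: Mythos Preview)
The paper does not give its own proof of this lemma; it simply records the statement with a citation to Groman--Solomon \cite{GromanSolomon2016}, and later (Remark~\ref{rem:continuous}) notes that inspection of their argument shows $i_0$ can be taken continuous in $K_0$, $r_0$, $\Lambda$. So there is nothing to compare against beyond the reference itself.

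Your sketch is a correct outline of the standard argument and, as far as I can tell, matches the Groman--Solomon approach: Klingenberg's formula reduces the problem to bounding the conjugate radius and the length of the shortest closed geodesic of $L$; the former follows from the Gauss equation and Rauch comparison, and the latter from a Fenchel-type estimate in a normal chart of controlled size. The one place to be a little careful is the total-curvature step: make sure the comparison between the Riemannian and Euclidean speeds and accelerations in the chart is tracked so that the Euclidean curvature (not just $|\gamma''|$) is bounded by a constant times $\Lambda+1$, and note that Fenchel's inequality holds for arbitrary closed $C^2$ curves in $\R^n$, so possible self-intersections of $\gamma$ are harmless. With those details filled in, the constant you produce is visibly continuous in $(K_0,r_0,\Lambda)$, which is exactly what the paper needs.
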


\begin{rem} \label{rem:continuous}
The constant $c$ ---~and $c'$~--- we get in the proof depends continuously on $K_0$, $r_0$ and $\epsilon$. Likewise, the constant $i_0$ appearing in Lemma \ref{lem:inj-rad} may also be chosen so that it depends continuously on $K_0$, $r_0$ and $\Lambda$. This follows directly from the proof in Groman and Solomon's paper \cite{GromanSolomon2016}.
\end{rem}

\begin{prop}[Monotonicity lemma] \label{prop:monotonicity}
Let $M$, $L$, $\delta$ be as above. Let $\Sigma$ be a compact Riemann surface with boundary $\del\Sigma$ with corners. Consider a nonconstant $J$-holomorphic curve $u:(\Sigma,\del\Sigma)\to (B(x,r),\del B(x,r)\cup L)$ for some $x\in L$ and $r\leq \delta$ such that $x\in u(\Sigma)$. Suppose that $u$ sends the corners of $\Sigma$ to $\del B(x,r)\cap L$. Then, 
\begin{align*}
\omega(u)\geq Cr^2,
\end{align*}
where $C=\frac{1}{4c}$, and $c$ is the constant of Lemma \ref{lem:iso-inequality}.
\end{prop}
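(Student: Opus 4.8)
The plan is to run the classical monotonicity argument for minimal-type surfaces, adapted to the case with Lagrangian boundary, using the isoperimetric inequality of Lemma~\ref{lem:iso-inequality}(ii) as the input in place of the usual boundaryless one. Set $f(\rho):=\omega(u|_{u^{-1}(B(x,\rho))})$ for $\rho\in(0,r]$; this is a monotone, absolutely continuous function of $\rho$, and $f(r)=\omega(u)$ since $u(\Sigma)\subseteq B(x,r)$. First I would record that for a $J$-holomorphic curve $\omega(u)=E(u)=\tfrac12\int|du|^2$, so $f(\rho)$ is literally the area of the part of the image (counted with multiplicity) inside $B(x,\rho)$, and that this area is at least the area enclosed by the boundary curve $\partial(u^{-1}(B(x,\rho)))$. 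The boundary of $\Sigma_\rho:=u^{-1}(B(x,\rho))$ decomposes into arcs mapped into $\partial B(x,\rho)$ and arcs mapped into $L$; the corners of $\Sigma_\rho$ land on $\partial B(x,\rho)\cap L$. So the image boundary curve is a concatenation $\gamma_\rho$ of an arc on $\partial B(x,\rho)$ and an arc on $L$, which is exactly the type of arc to which Lemma~\ref{lem:iso-inequality}(ii) applies (with center $x\in L$ and radius $\rho\le\delta$), giving $f(\rho)\ge a(\gamma_\rho)$ — here one has to check $a$ is computed consistently, i.e. that filling $\gamma_\rho$ by $u|_{\Sigma_\rho}$ agrees with the definition of $a$ via the auxiliary path $\alpha$ in $L$; this is the same homotopy/nullhomotopy argument already used twice in the text, since everything sits in a ball of radius $\le 2\delta/\epsilon\le r_{\mathrm{inj}}$.

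Next I would differentiate. By the coarea formula, $f'(\rho)=\int_{u^{-1}(\partial B(x,\rho))}(\text{Jacobian factor})$, and the standard computation for $J$-holomorphic (hence conformal) maps gives $f'(\rho)\ge \tfrac12\,\ell(\gamma_\rho^{\mathrm{sph}})^2/(\text{something})$ — more precisely, the length $\ell_\rho$ of the spherical part of $\gamma_\rho$ (the arc lying on $\partial B(x,\rho)$) satisfies, via Cauchy–Schwarz on the level set, a differential inequality relating $\ell_\rho$ to $f'(\rho)$. Combined with the isoperimetric bound $f(\rho)\le c\,\ell(\gamma_\rho)^2$ and the fact (to be argued) that the $L$-part of $\partial\Sigma_\rho$ contributes length comparable to the spherical part — or can be absorbed — one gets a differential inequality of the form $f(\rho)\le 4c\,\rho\,f'(\rho)$, i.e. $\frac{d}{d\rho}\log\!\big(f(\rho)/\rho^{1/(4c)}\big)\ge 0$. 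Wait: to land the exponent as a clean $r^2$ rather than $r^{1/(4c)}$, the right bookkeeping is instead the monotonicity-of-density form: one shows $\frac{d}{d\rho}\big(f(\rho)/\rho^2\big)\ge 0$ using that the isoperimetric constant enters only as a fixed multiplicative factor and that $x\in u(\Sigma)$ forces a lower bound on $f(\rho)/\rho^2$ as $\rho\to 0$ (a point of the curve has density $\ge 1$, or more conservatively $\ge \tfrac12$, against the Euclidean model). Letting $\rho\to 0^+$ and using the density lower bound at the interior/boundary point $x$, then integrating up to $\rho=r$, yields $\omega(u)=f(r)\ge C r^2$ with $C$ determined by $c$; tracking constants gives exactly $C=\frac{1}{4c}$.

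The main obstacle I expect is twofold. First, controlling the $L$-part of $\partial\Sigma_\rho$: the boundary arcs lying on $L$ have length one does not directly control, so one must either show they are negligible in the isoperimetric estimate (using that $L$ is totally geodesic up to the curvature $\Lambda$, so short $L$-arcs enclose little area — this is precisely why the tameness and $||B_L||\le\Lambda$ bounds were built into $\delta$) or, better, set up $a(\gamma_\rho)$ so that the $L$-contribution is handled by the definition of $a$ itself, as in the paragraph preceding Lemma~\ref{lem:iso-inequality}. Second, the differential-inequality step requires $f$ to be differentiable for a.e.\ $\rho$ and $\rho\mapsto B(x,\rho)$ to be a genuine exhaustion with $\partial B(x,\rho)$ transverse to $u$ for a.e.\ $\rho$ (Sard), plus the subtle point that $r\le\delta$ but $B(x,r)$ need not be geodesically convex, so $\partial B(x,\rho)$ may be singular; this is why the text works inside $B(x,2\delta)\subseteq B_{r_{\mathrm{inj}}}$ and invokes the nullhomotopy argument — I would do the same, replacing metric balls by their intersection with the image of $\exp_x$ where needed. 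Everything else (conformality of $J$-holomorphic maps, the coarea formula, $E(u)=\omega(u)$) is standard and can be cited from \cite{McDuffSalamon2012}.
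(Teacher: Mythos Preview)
Your framework---define $a(t)=\omega(u|_{u^{-1}(B(x,t))})$, combine coarea with the isoperimetric inequality, and integrate a differential inequality---matches the paper's. But two steps in your execution are off, and they are precisely the two you flag as obstacles.

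First, the $L$-part of $\partial\Sigma_t$. The point you are missing is that $\ell(t)$ should denote the length of the \emph{spherical} portion only, i.e.\ of $u|_{\partial\Sigma_t\setminus u^{-1}(L)}$. Decompose $\partial\Sigma_t$ into loops $\beta_j$ in the interior of $\Sigma$ and arcs $\gamma_i$ in the interior of $\Sigma$ whose endpoints lie on $u^{-1}(L)$, closed up by segments $\alpha_i$ in $u^{-1}(L)$. Take a primitive $\lambda$ of $\omega$ on the contractible ball $B_t(x)$ and apply Stokes: $a(t)=\sum_i a(u\circ\gamma_i)+\sum_j a(u\circ\beta_j)$. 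Now Lemma~\ref{lem:iso-inequality}(ii) bounds each $a(u\circ\gamma_i)$ by $c\,\ell(u\circ\gamma_i)^2$, where the length on the right is that of $\gamma_i$ alone; the closing path $\alpha_i$ on $L$ has already been absorbed into the constant $c$ (this is exactly why $c$ depends on $\epsilon$). So $a(t)\le c\,\ell(t)^2$ with $\ell(t)$ purely spherical, and your anticipated obstacle evaporates. The coarea formula is consistent with this: the level sets of $d_M(x,u(\cdot))$ contain none of the $L$-boundary for generic $t$, so coarea gives exactly $a'(t)=\ell(t)$.

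Second, the differential inequality. Neither of your two attempts is the right one. From $a(t)\le c\,\ell(t)^2$ and $a'(t)=\ell(t)$ you get $a'(t)\ge\sqrt{a(t)/c}$, hence
\[
\big(\sqrt{a(t)}\big)'=\frac{a'(t)}{2\sqrt{a(t)}}\ge\frac{1}{2\sqrt{c}}.
\]
Integrating from $0$ to $r$ and using $a(0)=0$ gives $\sqrt{a(r)}\ge r/(2\sqrt{c})$, i.e.\ $\omega(u)\ge r^2/(4c)$. No density-at-the-origin argument is needed, and no monotonicity of $f(\rho)/\rho^2$ either; the exponent $2$ falls out directly. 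Your version $f(\rho)\le 4c\,\rho\,f'(\rho)$ is not what the isoperimetric bound yields and would give the wrong power.
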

\begin{pr*}
The proof is that of Proposition 4.7.2 in \cite{Sikorav1994}, but using the version of the isoperimetric inequality above. We still give the details here for the sake of completeness. \par

Set $\Sigma_t:=u^{-1}(B(x,t))$ and $a(t):=\omega(u|_{\Sigma_t})$. By Sard's theorem, there is a subset of full measure $\Omega$ of $(0,r)$ such that for all $t\in\Omega$, $\Sigma_t$ is a subsurface of $\Sigma$ with piecewise smooth boundary $\del\Sigma_t=u^{-1}(\del B(x,t)\cup L)$. The discontinuities of the boundary are then contained in $u^{-1}(\del B(x,t)\cap L)$. \par

\begin{figure}[ht]
\centering
\begin{tikzpicture}
    \node[anchor=south west,inner sep=0] (image) at (0,0) {\includegraphics[width=0.4\textwidth]{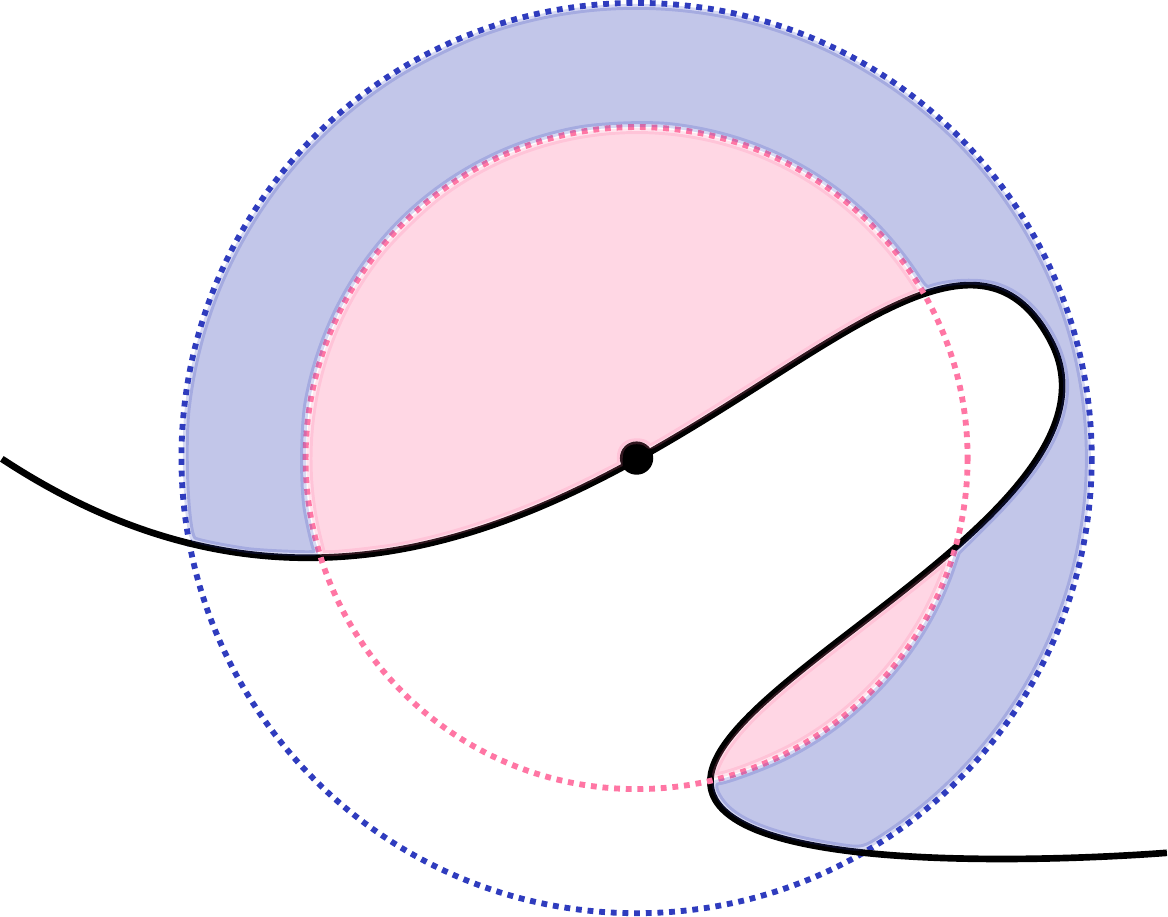}};
    \begin{scope}[x={(image.south east)},y={(image.north west)}]
        \node[] at (0.575,0.455) {$x$};
        \node[] at (0.96,0.12) {$L$};
        \node[FrenchPink] at (0.48,0.27) {$B(x,t)$};
        \node[DenimBlue] at (0.15,0.9) {$B(x,r)$};
    \end{scope}
\end{tikzpicture}
\qquad\qquad
\begin{tikzpicture}
    \node[anchor=south west,inner sep=0] (image) at (0,0) {\includegraphics[width=0.43\textwidth]{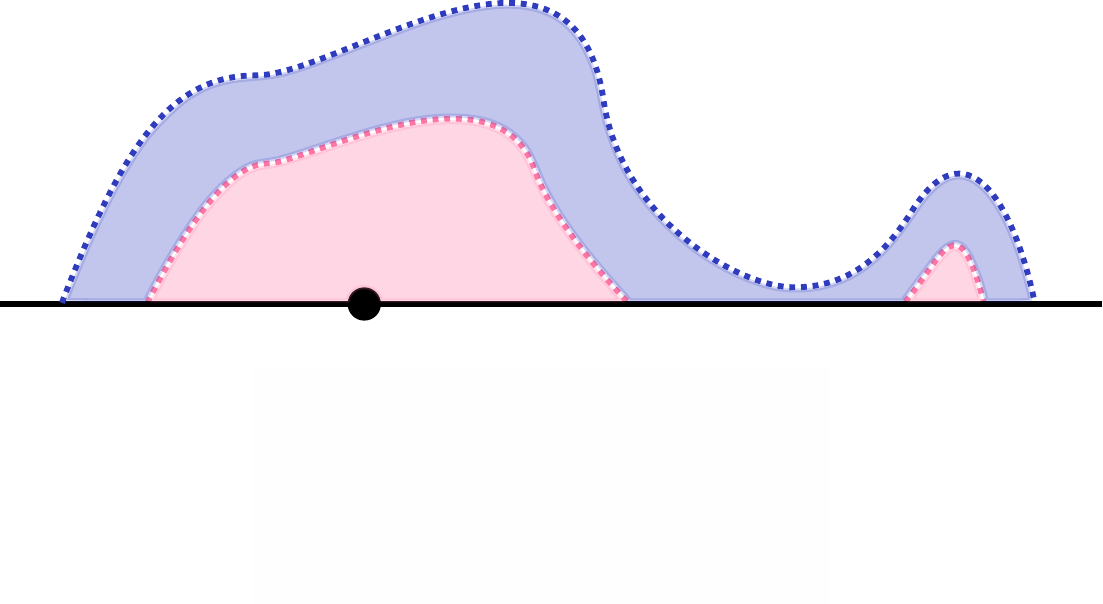}};
    \begin{scope}[x={(image.south east)},y={(image.north west)}]
        \node[] at (0.34,0.37) {$u^{-1}(x)$};
        \node[FrenchPink] at (0.42,0.67) {$\Sigma_t$};
        \node[DenimBlue] at (0.585,0.92) {$\Sigma$};
    \end{scope}
\end{tikzpicture}
\vspace*{8pt}
\caption{Visualization in two dimensions, both in the codomain (left) and domain (right).\label{fig:visualization-2d}}
\end{figure}

We begin by noting that, for $t\in\Omega$, we have the inequality
\begin{align*}
a(t)\leq c\ell(t)^2,
\end{align*}
where $\ell(t)$ is the length of $u|_{\del\Sigma_t-u^{-1}(L)}$. Indeed, write the boundary of $\Sigma_t$ as
\begin{align*}
\del\Sigma_t=\left(\bigsqcup_i \gamma_i\#\overline{\alpha_i}\right)\sqcup\left(\bigsqcup_j\beta_j\right),
\end{align*}
where the $\gamma_i$'s are arcs in the interior of $\Sigma$ with extremities in $u^{-1}(L)$, the $\alpha_i$'s are the segment of $u^{-1}(L)$ between the extremities of $\gamma_i$, and the $\beta_j$'s are loops. Finally, choose disk extensions $v_i:\D\to B_t(x)$ and $w_j:\D\to B_t(x)$ of $u|_{\gamma_i\#\overline{\alpha_i}}$ and $u|_{\beta_j}$, respectively. \par

Since $B_t(x)$ is contractible, we may take a primitive $\lambda$ of $\omega$ on it. Then, by Stokes' theorem,
\begin{align*}
a(t)
&= \int_{\Sigma_t} u^*\omega=\int_{\del \Sigma_t} u^*\lambda \\
&= \sum_i \int_{\gamma_i\#\overline{\alpha_i}} u^*\lambda+\sum_j \int_{\beta_j} u^*\lambda \\
&= \sum_i \int_{\del\D} v_i^*\lambda+\sum_j \int_{\del\D} w_j^*\lambda \\
&= \sum_i \int_{\D} v_i^*\omega+\sum_j \int_{\D} w_j^*\omega. \\
\intertext{Hence, by Lemma \ref{lem:iso-inequality},}
a(t)
&= \sum_i a(\gamma_i)+\sum_j a(\beta_j) \\
&\leq \sum_i c\ell(\gamma_i)^2+\sum_j c'\ell(\beta_j)^2 \\
&\leq c\ell(t)^2.
\end{align*}

Take $f:=\rho\circ u$, where $\rho$ is the distance from $x$ in $M$. Fix $t\in\Omega$. Then, in a neighborhood of $\del\Sigma_t$, $u^*g_J$ is a metric, and $f$ is its distance function from $u^{-1}(x)$. In particular, $|\grad f|_{u^*g_J}\equiv 1$. Therefore, by the coarea formula (see for example \cite{Chavel2006}), for $s$ near enough $t$, we have that
\begin{align} \label{eqn:coarea}
a(t)-a(s)
&= \int_{\{s\leq f\leq t\}}|\grad f|_{u^*g_J}da_{u^*g_J} \nonumber \\
&= \int_s^t\int_{\{f=\tau\}}d\ell_{u^*g_J}d\tau \\
&= \int_s^t \ell(\tau)d\tau. \nonumber
\end{align}
In particular, $a$ is differentiable on $\Omega$, and $a'(t)=\ell(t)$ for all $t\in\Omega$. \par

Therefore, for all such $t$, we have that
\begin{align*}
\left(\sqrt{a(t)}\right)'=\frac{a'(t)}{2\sqrt{a(t)}}=\frac{\ell(t)}{2\sqrt{a(t)}}\geq \frac{1}{2\sqrt{c}}
\end{align*}
by Lemma \ref{lem:iso-inequality}. Since $\Omega$ has full measure, we can integrate to get $\omega(u)=a(r)\geq \frac{1}{4c}r^2$.
\end{pr*}

\begin{rem} \label{rem:tame}
Both Lemma \ref{lem:iso-inequality} and Proposition \ref{prop:monotonicity} work for a larger class of almost complex structures $J$ and of metrics $g$ ---~not necessarily equal to $g_J$~--- respecting the hypotheses of Lemma \ref{lem:inj-rad}. Indeed, suppose that there exist constants $C_1,C_2>0$ such that $\omega(X,Y)\leq C_1 |X||Y|$ and $|X|^2\leq C_2\omega(X,JX)$ for all $X$ and $Y$. In that case, let $a$ denote the area with respect to $\omega$ as before, and let $A$ denote the area with respect to $g$. Then, Lemma \ref{lem:iso-inequality} gives the inequality $a(\gamma)\leq C_1c'\ell(\gamma)$ at \textit{(i)} and the inequality $a(\gamma)\leq C_1c\ell(\gamma)$ at \textit{(ii)}. Furthermore, Equation (\ref{eqn:coarea}) now applies to $A$, and thus $A'(t)=\ell(t)$. Therefore, Proposition \ref{prop:monotonicity} becomes
\begin{align*}
\omega(u)\geq \frac{1}{C_2}A(r)\geq \frac{1}{4C_1C_2c}r^2.
\end{align*}
\end{rem}

\subsection{Proof of Theorem \ref{thm:main-precise}} \label{subsec:proof}
We are now ready to give the proof of the main result. Remember that we have fixed a symplectic manifold $(M,\omega)$ and an $\omega$-compatible almost complex structure $J$ such that $(M,J)$ is either closed or convex at infinity. Likewise, we have fixed families of closed connected Lagrangian submanifolds $\mathscr{F}$ and $\mathscr{F}'$ such that $(\overline{\cup_{F\in\mathscr{F}}F})\cap (\overline{\cup_{F'\in\mathscr{F'}}F'})$ is discrete. We also assume from now on that all Lagrangian submanifolds are in some fixed collection $\mathscr{L}^\star_{\Lambda,\epsilon}(M)$ (as defined in Subsection \ref{subsec:notation}). In what follows, $\hat{d}^{\mathscr{F},\mathscr{F}'}$ denotes a $J$-adapted metric (as defined in Subsection \ref{subsec:j-adapted}) on that collection. \par

For subsets $A,B\subseteq M$, we take
\begin{align*}
s(A,B):=\sup_{x\in A} d_M(x,B):=\sup_{x\in A}\inf_{y\in B} d_M(x,y).
\end{align*}
Therefore, the Hausdorff metric is given by $\delta_H(A,B)=\max\{s(A,B),s(B,A)\}$. \par

Take $L,L'\in \mathscr{L}^\star_{\Lambda,\epsilon}(M)$ and $\delta>0$. Let $F_1,\dots,F_k$ be the Lagrangian submanifolds in $\mathscr{F}$ given by the definition of a $J$-adapted pseudometric. Take also $C^0$- and Hofer-small Hamiltonian perturbations $\widetilde{L}$, $\widetilde{L}'$ and $\widetilde{F}_1,\dots,\widetilde{F}_k$ of these manifolds making them pairwise transverse. Then, for any $x\in \widetilde{L}\cup\widetilde{L}'$, there exists a nonconstant $J$-holomorphic polygon $u:S_r\to M$ with the following properties:
\begin{enumerate}[label=(\roman*)]
\item it has boundary along $\widetilde{L}$, $\widetilde{L}'$, and $\widetilde{F}_1,\dots,\widetilde{F}_k$;
\item it passes through $x$;
\item $\omega(u)\leq d^\mathscr{F}(L,L')+\delta$.
\end{enumerate}
Assuming that the perturbations of $L$ and $L'$ are also $C^2$-small, we get that $\widetilde{L},\widetilde{L}'\in \mathscr{L}^\star_{\tilde{\Lambda},\tilde{\epsilon}}(M)$ for some $\tilde{\Lambda}\geq\Lambda$ and $\tilde{\epsilon}\leq\epsilon$. \par

Take $x\in\widetilde{L}-(\widetilde{L}'\cup \widetilde{F}_1\cup\dots \cup \widetilde{F}_k)$ and
\begin{align*}
\tilde{\delta}:=\epsilon\min\left\{1,\frac{1}{2}r_0,\frac{1}{2}i_0(K_0,r_0,\tilde{\Lambda})\right\},
\end{align*} 
where $i_0$ is the constant appearing in Lemma \ref{lem:inj-rad}. By Sard's theorem, there is an open dense subset of $(0,\min\{\tilde{\delta},d_M(x,\widetilde{L}'\cup \widetilde{F}_1\dots \cup \widetilde{F}_{|r|-2})\})$ such that, for all $\rho$ in this subset, $\Sigma:=u^{-1}(B_\rho(x))$ is a smooth submanifold of $S_r$ with boundary with corners. In particular, $u|_\Sigma$ respects the hypotheses of Proposition \ref{prop:monotonicity}. We thus get
\begin{align*}
d^\mathscr{F}(L,L')+\delta\geq \omega(u)\geq \omega(u|_\Sigma)\geq \tilde{C}\rho^2,
\end{align*}
where $\tilde{C}=C(K_0,r_0,\tilde{\Lambda})$ is the constant appearing in Proposition \ref{prop:monotonicity}. Since this holds for all $\delta>0$ and for all $\rho$ in a dense subset, we get
\begin{align*}
d^\mathscr{F}(L,L')\geq \tilde{C}\min\left\{\tilde{\delta},d_M(x,\widetilde{L}'\cup \widetilde{F}_1\dots \cup \widetilde{F}_{|r|-2})\right\}^2.
\end{align*}
In particular, whenever $d^\mathscr{F}(L,L')<\tilde{C}\tilde{\delta}^2$, we get
\begin{align} \label{eqn:dF-to-s_perturbations}
d^\mathscr{F}(L,L')\geq \tilde{C}s(\widetilde{L},\widetilde{L}'\cup \widetilde{F}_1\dots \cup \widetilde{F}_{|r|-2})^2
\end{align}
by taking the supremum over all $x$'s. \par

We now must get rid of the Hamiltonian perturbations on the right-hand side of (\ref{eqn:dF-to-s_perturbations}). In order to do so, choose sequences of generic $C^2$- and Hofer-small Hamiltonian diffeomorphisms $\{\phi_n\}_{n\geq 1}$ and $\{\phi^0_n\}_{n\geq 1}$, which converges to the identity in the $C^2$ sense. Likewise, for each $i\in\{1,\dots,k\}$, choose a sequence of generic $C^0$- and Hofer-small Hamiltonian diffeomorphisms $\{\phi^i_n\}_{n\geq 1}$ which converges to the identity in the $C^0$ sense. For each $n$, there is a $J$-holomorphic polygon $u_n:S_{r_n}\to M$ as above. \par

By Remark \ref{rem:continuous}, as $n$ tends to infinity, the corresponding constants $\tilde{\delta}$ and $\tilde{C}$ converge to
\begin{align*}
\delta_0:=\epsilon\min\left\{1,\frac{1}{2}r_0,\frac{1}{2}i_0(K_0,r_0,\Lambda)\right\}
\end{align*}
and $C=C(K_0,r_0,\Lambda)$, respectively. Furthermore, for any $n\geq 1$, $x\in L$, and $y\in L'\cup F_1\dots \cup F_k$, we have that
\begin{align*}
d_M(x,y)
&\leq d_M(x,\phi_n(x))+d_M(\phi_n(x),\phi^i_n(y))+d_M(\phi^i_n(y),y) \\
&\leq d_M(\phi_n(x),\phi^i_n(y)) + d_{C^0}(\Id,\phi_n) + \max_i d_{C^0}(\Id,\phi^i_n),
\end{align*}
for all $i\in\{0,1,\dots,k\}$. Therefore, by taking the infimum over all $y$ and then, the supremum over all $x$, we have that
\begin{align*}
s(\phi_n(L),\phi^0_n(L')\cup\phi^1_n(F_1)\cup\dots\cup\phi^k_n(F_k))
&\geq s(L,L'\cup F_1\cup\dots\cup F_k) \\
&\qquad - d_{C^0}(\Id,\phi_n) \\
&\qquad - \max_i d_{C^0}(\Id,\phi^i_n).
\end{align*}
We can thus finally put this back into (\ref{eqn:dF-to-s_perturbations}), and take the limit $n\to\infty$ to get
\begin{align} \label{eqn:dF-to-s}
d^\mathscr{F}(L,L')\geq C s(L,L\cup F_1\dots \cup F_k)^2 \geq C s(L,L'\cup (\cup_{F\in\mathscr{F}}F))^2,
\end{align}
whenever $d^\mathscr{F}(L,L')<C\delta_0^2$. In other words, Inequality (\ref{eqn:dF-to-s_perturbations}) holds without any perturbation. One gets similarly the inequalities
\begin{align} \label{eqn:dF'-to-s'}
d^\mathscr{F}(L,L') &\geq C s(L',L\cup (\cup_{F\in\mathscr{F}}F))^2 \nonumber \\
d^\mathscr{F'}(L,L') &\geq C s(L,L'\cup (\cup_{F'\in\mathscr{F'}}F'))^2 \\
d^\mathscr{F'}(L,L') &\geq C s(L',L\cup (\cup_{F'\in\mathscr{F'}}F'))^2. \nonumber
\end{align}
We must now turn the inequalities in (\ref{eqn:dF-to-s}) and (\ref{eqn:dF'-to-s'}) into an inequality in terms of $\delta_H(L,L')$. \par

Since $(\overline{\cup F})\cap (\overline{\cup F'})$ is discrete by the definition of a $J$-adapted metric, there exists a constant $\eta'>0$ such that $B_{\eta'}(x)\cap B_{\eta'}(y)=\emptyset$ for all $x\neq y\in (\overline{\cup F})\cap (\overline{\cup F'})$. Furthermore, there exists a constant $\eta''=\eta''(K_0,r_0,\Lambda)>0$ such that any closed manifold $L$ which is contained in a metric ball $B_{\eta''}(x)$, for some $x\in M$, must have $||B_L||>\Lambda$ ---~we refer the reader to Corollary \ref{cor:bound_2nd} in the next section for a precise estimate of $\eta''$. We set $\eta:=\min\{\eta',\eta''\}$. \par

\begin{figure}[ht]
\centering
\begin{tikzpicture}
    \node[anchor=south west,inner sep=0] (image) at (0,0) {\includegraphics[width=0.7\textwidth]{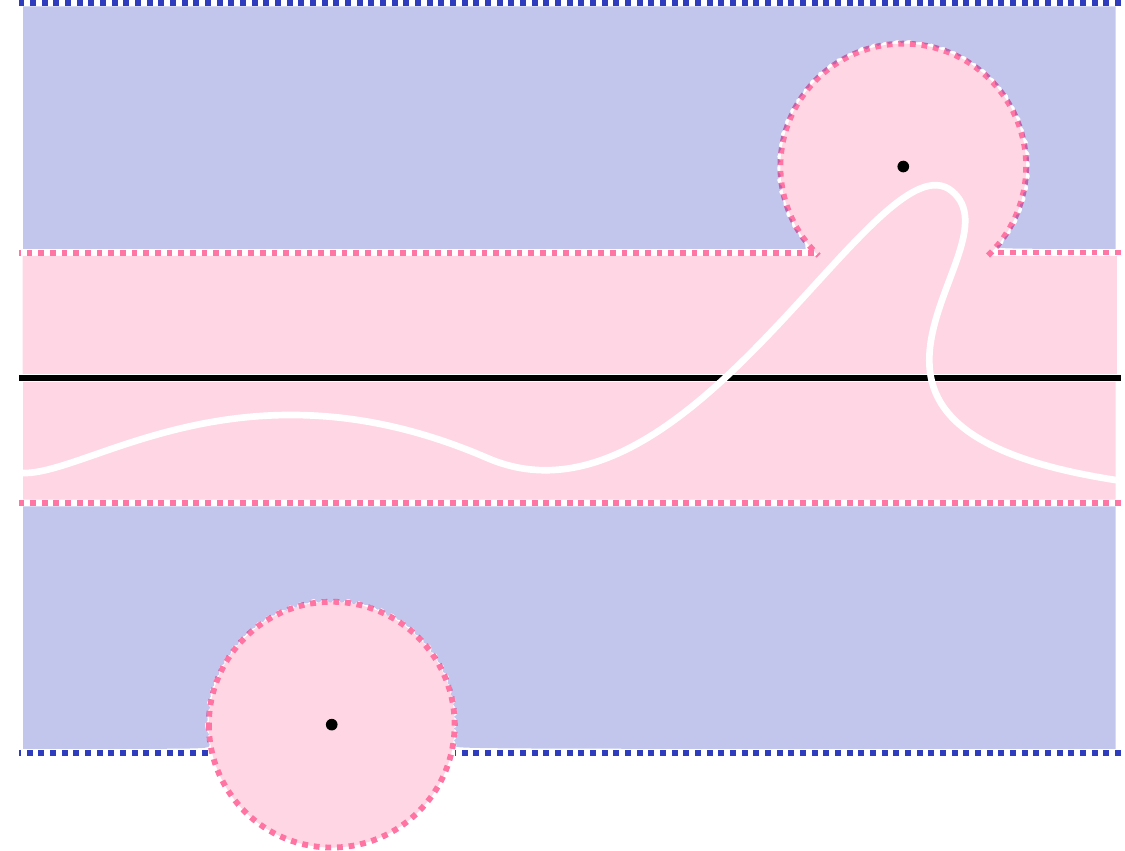}};
    \begin{scope}[x={(image.south east)},y={(image.north west)}]
        \node[white] at (0.48,0.49) {$L$};
        \node[] at (0.21,0.595) {$L'\cup((\cup F)\cap (\cup F'))$};
    \end{scope}
\end{tikzpicture}
\vspace*{8pt}
\caption{The $3\sigma$-neighborhood of $L'$ (in pink) contains the connected component containing $L'$ of the $\sigma$-neighborhood of $L'\cup ((\overline{\cup F})\cap (\overline{\cup F'}))$ (in blue).\label{fig:neighborhoods}}
\end{figure}

Suppose now that $\hat{d}^{\mathscr{F},\mathscr{F'}}(L,L')<\min\{C\delta_0^2,C\eta^2\}=:R$. By the inequalities in (\ref{eqn:dF-to-s}) and (\ref{eqn:dF'-to-s'}), we have that 
\begin{align*}
\sigma:=\max\{s(L,L'\cup (\cup F)),s(L,L'\cup (\cup F'))\}<\eta.
\end{align*}
But, by definition of $s$, $L$ must be in the $\sigma$-neighborhood of $L'\cup (\overline{\cup F})$ and $L'\cup (\overline{\cup F'})$, and thus of $L'\cup ((\overline{\cup F})\cap(\overline{\cup F'}))$. However, this neighborhood is composed of a disjoint union of an open neighborhood of $L'$ and metric balls of radius $<\eta''$. By the choice of $\eta''$, $L$ must then be in the component containing $L'$. However, this component is itself contained in the $3\sigma$-neighborhood of $L'$. Therefore, $3\sigma\geq s(L,L')$. By taking the maximum of the inequality in (\ref{eqn:dF-to-s_perturbations}) and the second inequality of (\ref{eqn:dF'-to-s'}), we get
\begin{align*}
\hat{d}^{\mathscr{F},\mathscr{F'}}(L,L')\geq C\sigma^2\geq \frac{C}{9}s(L,L')^2.
\end{align*}
Doing the same thing for the first and third inequalities of (\ref{eqn:dF'-to-s'}) finishes the proof. The $C$ appearing in the statement of Theorem \ref{thm:main-precise} is thus equal to $\frac{1}{36c}$, where $c$ is the constant of Lemma \ref{lem:iso-inequality}. \par

\begin{rem} \label{rem:convergence}
\begin{enumerate}[label=(\arabic*)]
\item If one is only interested by the statement of Theorem \ref{thm:main} and is willing to restrict oneself to the case when $M$ is compact, then the standing hypothesis that $(\overline{\cup F})\cap (\overline{\cup F'})$ is discrete can be loosened up. More precisely, we can suppose that $(\overline{\cup F})\cap (\overline{\cup F'})$ is only totally disconnected. \par

In that case, a sequence $\{L_n\}_{n\geq 1}$ must have a converging subsequence; this is because the collection of closed subsets of $M$ with the Hausdorff metric is compact. Denote by $E$ its limit set. Since the inequalities in (\ref{eqn:dF-to-s}) and (\ref{eqn:dF'-to-s'}) still stands ---~with $L$ replaced by $L_n$ and $L'$ by $L_0$~--- the limit $E$ must lie in $L_0\cup((\overline{\cup F})\cap (\overline{\cup F'}))$. However, a sequence of connected subsets must converge to a connected one (see \cite{Stacy1967} for instance). But, by the argument above, it cannot converge to a point. Therefore, we must have $E\subseteq L_0$. Using an analogous argument for the first and last inequality in (\ref{eqn:dF'-to-s'}), we get $L_0\subseteq E\cup((\overline{\cup F})\cap (\overline{\cup F'}))$. Thus, $L_0\subseteq E$ by the same connectivity argument. Therefore, any converging subsequence of $\{L_n\}_{n\geq 1}$ converges to $E=L_0$, which means that $\{L_n\}_{n\geq 1}$ itself converges to $L_0$ in the Hausdorff metric. \par

\item Likewise, to prove Theorem \ref{thm:main}, there is no need for the weak-exactness hypothesis on $\mathscr{L}^{L_0}(M)$ in the spectral case. Indeed, for $n$ large, we will have $\gamma(L_0,L_n)<\hbar(M,L_0,J)$. Therefore, we will still have a $J$-holomorphic strip between Hamiltonian deformations of $L_0$ and $L_n$. The rest of the proof then follows as above. The same is true of $\gamma_\mathrm{ext}$. \par
\end{enumerate}
\end{rem}

\section{The two-dimensional case} \label{sec:proof-dim2}
We now explain how to prove the second part of Theorem \ref{thm:main}. More precisely, we will prove the following.

\begin{thm} \label{thm:main-precise_dim2}
Suppose that $\dim M=2$. For any $J$-adapted metric $\hat{d}^{\mathscr{F},\mathscr{F'}}$ on $\mathscr{L}^\star_{\Lambda}(M)$, there exist constants $R'=R'(K_0,r_0,\Lambda,(\overline{\cup F})\cap (\overline{\cup F'}))>0$ and $C'=C'(K_0,r_0)>0$ such that whenever $\hat{d}^{\mathscr{F},\mathscr{F'}}(L,L')<R$, then
\begin{align*}
\hat{d}^{\mathscr{F},\mathscr{F'}}(L,L')\geq C'\delta_H(L,L')^2.
\end{align*}
\end{thm}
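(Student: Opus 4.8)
The plan is to mimic the proof of Theorem \ref{thm:main-precise} but to replace the use of the $\epsilon$-tame isoperimetric inequality (Lemma \ref{lem:iso-inequality}\textit{(ii)}) with a surface-specific argument that bounds the area of a $J$-holomorphic polygon from below in terms of a well-chosen metric ball radius \emph{without} reference to $\epsilon$. Concretely, given $L, L' \in \mathscr{L}^\star_\Lambda(M)$ with $\dim M = 2$, and $\hat{d}^{\mathscr{F},\mathscr{F'}}(L,L')$ small, I would still extract (after Hamiltonian perturbation) a nonconstant $J$-holomorphic polygon $u$ passing through a point $x \in \widetilde L - (\widetilde L' \cup \widetilde F_1 \cup \dots \cup \widetilde F_k)$ with $\omega(u) \le d^\mathscr{F}(L,L') + \delta$. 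The point is that, since $\dim M = 2$, the Lagrangian submanifolds are curves, and we can use combinatorial/topological control of how these curves sit inside a small metric ball $B_\rho(x)$ — in particular, the uniform bound $\|B_L\| \le \Lambda$ forces, via Lemma \ref{lem:inj-rad} and curvature comparison, that each arc of $\widetilde L'$ (and of the $\widetilde F_i$) entering a sufficiently small ball behaves almost like a geodesic, so it cuts the ball into a controlled number of regions. This lets one run the monotonicity argument on $B_\rho(x)$ where $\rho$ depends only on $K_0$, $r_0$, $\Lambda$ and $d_M(x, \widetilde L' \cup \widetilde F_1 \cup \dots)$, but crucially not on any tameness constant.

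The heart of the modification is the replacement ball: instead of taking $\tilde\delta = \epsilon \min\{1, \tfrac12 r_0, \tfrac12 i_0\}$ I would take a radius of the form $\tilde\delta' = \tilde\delta'(K_0, r_0, \Lambda)$ on which one can still derive an isoperimetric estimate $a(\gamma) \le c' \ell(\gamma)^2$ for arcs $\gamma$ with endpoints on $L$, with $c' = c'(K_0, r_0)$. In dimension $2$, one can close up such an arc using the boundary curve $L$ itself rather than an abstract path inside $L$: a short arc of $L$ between two nearby points of $B_{\tilde\delta'}(x) \cap L$ has length controlled purely by $\Lambda$, $K_0$, $r_0$ (because $L$ has bounded geodesic curvature, so in a small ball it cannot ``double back'' — it is quasi-geodesic). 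This is exactly the content that, in the general case, required $\epsilon$-tameness. I would formalize this as a lemma in the spirit of Lemma \ref{lem:iso-inequality} but with $\epsilon$ removed, using the combinatorics of curves on surfaces (how many times a bounded-curvature curve can cross a small geodesic ball, and how the complement decomposes), which the excerpt promises to develop in this section.

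Once that surface-specific isoperimetric inequality is in hand, the monotonicity lemma (Proposition \ref{prop:monotonicity}) goes through verbatim with the new ball and constant, giving $\omega(u|_\Sigma) \ge C' \rho^2$ with $C' = C'(K_0, r_0)$ and $\rho$ up to $\min\{\tilde\delta', d_M(x, \widetilde L' \cup \widetilde F_1 \cup \dots)\}$. From here the argument is identical to Subsection \ref{subsec:proof}: take suprema over $x$, pass to the limit along sequences of shrinking Hamiltonian perturbations $\phi_n, \phi_n^i$ (using the continuity of the constants in $\Lambda$, Remark \ref{rem:continuous}) to remove the perturbations, obtaining $d^\mathscr{F}(L,L') \ge C' s(L, L' \cup (\cup F))^2$ and the three companion inequalities, then use discreteness of $(\overline{\cup F}) \cap (\overline{\cup F'})$ together with the uniform lower bound $\eta''(K_0, r_0, \Lambda)$ on the size of a ball that can contain a closed $\Lambda$-bounded submanifold (Corollary \ref{cor:bound_2nd}) to convert the one-sided distances into $\delta_H(L,L')$, picking up the factor $\tfrac19$. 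Setting $R' = \min\{C'(\tilde\delta')^2, C'\eta^2\}$ completes the proof.

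The main obstacle I expect is proving the $\epsilon$-free isoperimetric inequality for arcs — i.e.\ showing that in a metric ball of radius $\tilde\delta'(K_0, r_0, \Lambda)$, any arc of a closed curve $L$ with $\|B_L\| \le \Lambda$ joining two boundary points of the ball has length comparable to the chord distance, with constant depending only on $K_0, r_0, \Lambda$. This is a genuinely two-dimensional statement: a bounded-geodesic-curvature curve in a small ball of a surface of bounded curvature is quasi-geodesic, so it cannot accumulate length, but making this quantitative (and handling the case where $L$ re-enters the ball several times, so that ``the arc of $L$'' is really a union of sub-arcs) requires the combinatorial curve-on-surface arguments the author defers to this section. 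Everything downstream of that lemma is a transcription of the general proof with $\epsilon$ deleted from the list of parameters.
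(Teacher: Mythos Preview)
Your approach is viable and would yield the same statement, but it is \emph{genuinely different} from the paper's. The paper does not attempt to prove an $\epsilon$-free version of Lemma \ref{lem:iso-inequality}\textit{(ii)} at all. Instead it proves a geometric result (Theorem \ref{thm:disk_dim2}): for any polygon $u:S_r\to M$ with boundary along curves $L,K_1,\dots,K_k$ of geodesic curvature $\le\Lambda$, passing through $x\in L$, the interior of the image $u(\operatorname{int}S_r)$ contains a full metric ball of radius $\rho_0=\rho_0(K_0,r_0,\Lambda,d_M(x,\cup K_i))$. Once such an interior ball is found, the \emph{absolute} monotonicity lemma (Proposition \ref{prop:monotonicity_absolute}) --- which uses only the loop isoperimetric inequality \textit{(i)} --- gives $\omega(u)\ge C'\rho_0^2$ with $C'=\tfrac{1}{4c'}$ depending only on $K_0,r_0$. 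The rest of the proof then runs exactly as in Subsection \ref{subsec:proof}. The existence of this interior ball is proved by an ``osculating disk'' argument in the spirit of the Moon in a puddle theorem (via Lemmas \ref{lem:disk_curvature} and \ref{lem:disk_osculating}), not by any crossing/region-count combinatorics.

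Your route --- keep the relative monotonicity lemma but replace the tameness input by a quasi-geodesic estimate for bounded-curvature arcs --- also works. The key lemma you isolate (a connected arc of $L\cap \overline{B_t(x)}$ with endpoints $p,q\in\partial B_t(x)$ has length $\le C\,d_M(p,q)$ once $t$ is small in terms of $K_0,r_0,\Lambda$) is true: for $t$ small the total turning of the tangent along such an arc is $\le\Lambda\ell$, and if the arc stayed in $B_t$ with $\ell$ large it would be forced to turn by an amount incompatible with $\Lambda t\ll 1$; comparison with the flat model makes this precise, and it is close in spirit to Lemma \ref{lem:disk_curvature}. Feeding $\ell(\alpha_i)\le C\,\ell(\gamma_i)$ into the proof of Lemma \ref{lem:iso-inequality}\textit{(ii)} yields $c=c(K_0,r_0)$, and hence $C'=C'(K_0,r_0)$, with $\Lambda$ appearing only in the threshold radius and thus only in $R'$, exactly as required. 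So both strategies remove $\epsilon$: the paper by moving the monotonicity ball \emph{off} $L$ into the interior of the image, you by showing that on a surface the Lagrangian boundary contribution is already tame-like once curvature is bounded.
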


As noted in the introduction, the key to getting rid of the dependency of both $R$ and $C$ on $\epsilon$ is to use the two-dimensionality of $M$ in order to make a better choice of a metric ball in $M$ before applying the following version of the monotonicity lemma. \par

\begin{prop}[Monotonocity lemma, absolute version] \label{prop:monotonicity_absolute}
Let $(M,\omega,J,g_J)$ be a symplectic manifold equipped with a compatible almost complex structure and the corresponding metric. Let $\Sigma$ be a compact Riemann surface with boundary $\del\Sigma$. Consider a nonconstant $J$-holomorphic curve $u:(\Sigma,\del\Sigma)\to (B(x,r),\del B(x,r))$, where $x\in M$ and $r\leq \frac{1}{2}r_0$. Suppose that $x\in u(\Sigma)$. Then, 
\begin{align*}
\omega(u)\geq C'r^2,
\end{align*}
where $C'=\frac{1}{4c'}$, with $c'$ the constant of Lemma \ref{lem:iso-inequality}.
\end{prop}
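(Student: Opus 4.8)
The plan is to mimic the proof of Proposition \ref{prop:monotonicity} almost verbatim, with the absolute version of the isoperimetric inequality (part \textit{(i)} of Lemma \ref{lem:iso-inequality}) playing the role that part \textit{(ii)} played there, and with no auxiliary arcs $\alpha$ on a Lagrangian to keep track of. Since the boundary of $u$ now maps entirely into $\partial B(x,r)$, there is no need for the tameness hypothesis, for the injectivity radius of a Lagrangian, or for the constant $\epsilon$; this is precisely what makes the final constant $C' = \frac{1}{4c'}$ depend only on $K_0$ and $r_0$. Note that the hypothesis $r \le \frac12 r_0$ guarantees $B(x,r) \subseteq B_{r_0/2}(x)$, so that loops appearing below lie in a ball to which part \textit{(i)} of Lemma \ref{lem:iso-inequality} applies.

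First I would set $\Sigma_t := u^{-1}(B(x,t))$ and $a(t) := \omega(u|_{\Sigma_t})$ for $t \in (0,r)$. By Sard's theorem there is a full-measure subset $\Omega \subseteq (0,r)$ such that, for $t \in \Omega$, the value $t$ is regular for $\rho \circ u$ (with $\rho = d_M(x,\cdot)$), so $\Sigma_t$ is a subsurface with smooth boundary $\partial\Sigma_t = u^{-1}(\partial B(x,t))$, a disjoint union of loops $\beta_1,\dots,\beta_m$. Choosing disk extensions $w_j : \mathds{D} \to B_t(x)$ of the loops $u|_{\beta_j}$ inside the contractible ball $B_t(x)$, and a primitive $\lambda$ of $\omega$ there, Stokes' theorem gives $a(t) = \sum_j \int_{\mathds{D}} w_j^*\omega = \sum_j a(\beta_j)$. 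Applying Lemma \ref{lem:iso-inequality}\textit{(i)} to each $\beta_j$ and summing yields $a(t) \le c'\,\ell(t)^2$, where $\ell(t)$ is the length of $u|_{\partial\Sigma_t}$. (Here one uses that $\ell(t)^2 \ge \sum_j \ell(\beta_j)^2$ since the $\beta_j$ are disjoint components.)

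Next, exactly as in the proof of Proposition \ref{prop:monotonicity}, set $f := \rho \circ u$; near $\partial\Sigma_t$ the pullback $u^*g_J$ is a genuine metric whose distance from $u^{-1}(x)$ is $f$, so $|\grad f|_{u^*g_J} \equiv 1$, and the coarea formula gives $a(t) - a(s) = \int_s^t \ell(\tau)\,d\tau$ for $s,t$ near each other in $\Omega$. Hence $a$ is differentiable on $\Omega$ with $a'(t) = \ell(t) \ge \sqrt{a(t)/c'}$, so $\bigl(\sqrt{a(t)}\bigr)' \ge \frac{1}{2\sqrt{c'}}$ there. Since $x \in u(\Sigma)$, the curve $u$ is nonconstant and $u^{-1}(x) \neq \emptyset$, so $a(t) > 0$ for $t > 0$ and $a(t) \to 0$ as $t \to 0$ (the image shrinks to the point set $u^{-1}(x)$); integrating over $\Omega \cap (0,r)$, which has full measure, yields $\sqrt{a(r)} \ge \frac{r}{2\sqrt{c'}}$, i.e. $\omega(u) = a(r) \ge \frac{1}{4c'}r^2$.

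The argument is essentially routine given the machinery already in place; the only point requiring a little care — and the one I would expect to be the main obstacle to a fully rigorous write-up — is the handling of the corners and of the behavior near $u^{-1}(x)$: one must check that $u^{-1}(x)$ is a compact set of measure zero on which $a$ vanishes in the limit, that the regular values $t \in \Omega$ really do produce smooth (cornerless) boundary curves so that Lemma \ref{lem:iso-inequality}\textit{(i)} applies directly, and that $a$ is (locally Lipschitz, hence) absolutely continuous so that integrating the differential inequality over the full-measure set $\Omega$ is legitimate. All of these are standard in the monotonicity-lemma literature (cf. the proof of Proposition 4.7.2 in \cite{Sikorav1994} and Remark 4.4.2 in \cite{McDuffSalamon2012}), and indeed the absolute case is strictly simpler than Proposition \ref{prop:monotonicity} since no Lagrangian boundary arcs enter.
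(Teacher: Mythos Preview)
Your proposal is correct and follows essentially the same approach as the paper: the paper's proof simply observes that, in the absence of Lagrangian boundary, $\partial\Sigma_t$ consists only of loops $\beta_j$, applies part \textit{(i)} of Lemma~\ref{lem:iso-inequality} to get $a(t)=\sum_j a(\beta_j)\le \sum_j c'\ell(\beta_j)^2\le c'\ell(t)^2$, and then defers to the proof of Proposition~\ref{prop:monotonicity} for the coarea/differential-inequality argument. Your write-up spells out these same steps in more detail; note that in this absolute version $\partial\Sigma$ has no corners to begin with, so that particular concern in your final paragraph does not arise.
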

\begin{pr*}
We make use of the same notation as in the proof of Proposition \ref{prop:monotonicity}. Since $u$ does not have boundary components along some Lagrangian submanifold $L$, the boundary $\del\Sigma_t$ is only composed of disjoint loops $\beta_j$. Therefore, we now get
\begin{align*}
a(t)= \sum_j a(\beta_j)\leq \sum_j c'\ell(\beta_j)^2=c'\ell(t)^2.
\end{align*}
The rest of the proof than follows as in Proposition \ref{prop:monotonicity}.
\end{pr*}

Let $L$ and $L'$ be Lagrangian submanifolds of $M$, i.e.\ closed curves on $M$, and let $\widetilde{L}$ and $\widetilde{L}'$ be sufficiently small generic Hamiltonian deformations of them. Take $x\in \widetilde{L}\cup \widetilde{L}'$. Let $u:S_r\to M$ be the $J$-holomorphic polygon with boundary along $L$ and $L'$ ---~and potentially some other Lagrangian submanifolds~--- passing through $x$ given by the definition of a $J$-adapted metric. By Proposition \ref{prop:monotonicity_absolute}, if we can find a disk entirely contained in $u(\operatorname{int}S_r)$, then we will get a proof of Theorem \ref{thm:main-precise_dim2}. Indeed, it will then suffice to change the step in the proof of Theorem \ref{thm:main-precise} where we apply the monotonicity lemma; the rest of the proof remains the same. It thus suffices to prove the following purely geometric result. \par

\begin{thm} \label{thm:disk_dim2}
Let $M$ be a complete surface with Gaussian curvature $|K|\leq K_0$ and injectivity radius $r_\mathrm{inj}(M)\geq r_0>0$. Let $L,K_1,\dots,K_k$ be pairwise transversal closed curves on $M$ with geodesic curvature uniformly bounded by $\Lambda$. Let $x\in L-\cup_i K_i$. Suppose that there exists a smooth map $u:S_r\to M$ with boundaries along $L,K_1,\dots,K_k$ passing through $x$. Then, there exists a constant
\begin{align*}
\rho_0=\rho_0(K_0,r_0,\Lambda,d_M(x,\cup_i K_i))>0
\end{align*}
such that $u(\operatorname{int}S_r)$ contains a metric ball of radius $\rho_0$.
\end{thm}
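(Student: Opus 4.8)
The plan is to produce, from the polygon $u : S_r \to M$, a genuine open disk in the image $u(\operatorname{int} S_r)$ whose radius is controlled only by the stated data. The point $x$ lies on $L$ and is at distance at least $d_M(x, \cup_i K_i)$ from all the other boundary curves; fix $\rho_1 := \min\{\tfrac{1}{4}r_0, \tfrac14 d_M(x,\cup_i K_i), \tfrac12 i_0(K_0,r_0,\Lambda)\}$ (with $i_0$ from Lemma \ref{lem:inj-rad}, so that $L$ has injectivity radius comparable to this scale), and work inside the metric ball $B(x,\rho_1)$. The only boundary curve of $u$ meeting this ball is $L$, so $u^{-1}(B(x,\rho_1))$ is a surface whose boundary arcs map either into $\partial B(x,\rho_1)$ or into $L$. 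The first step is thus to reduce to a two-boundary-type picture: a holomorphic (hence, since $\dim M = 2$, orientation-preserving and essentially area-nonnegative) map of a surface into a small ball, with part of the boundary on the single curve $L$ and part on the sphere $\partial B$.

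Second, I would use two-dimensional topology to extract a disk. Because $M$ is a surface, $L$ locally separates $B(x,\rho_1)$ (shrinking $\rho_1$ further using the curvature bound $\Lambda$ on the geodesic curvature of $L$, one can assume $L \cap B(x,\rho_1)$ is a single embedded arc that cuts the ball into two topological half-disks $D^+$ and $D^-$). Since $u$ passes through $x \in L$, and $u$ is a nonconstant holomorphic map, its image is open near generic points; the key combinatorial claim is that $u(\operatorname{int} S_r)$ must contain one of the two half-balls $D^\pm$ — or at least a definite fraction of one — because otherwise the boundary of the image inside $B(x,\rho_1)$ would have to consist entirely of pieces of $L$ and pieces of $\partial B(x,\rho_1)$ "sealing off" the region, which is impossible for the image of a map whose only boundary data in the ball is $L$ (the part of $\partial u(S_r)$ on $\partial B(x,\rho_1)$ is forced, but the part "facing" $x$ cannot be there). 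Concretely: let $V$ be the connected component of $u(\operatorname{int} S_r) \cap B(x,\rho_1)$ accumulating at $x$; its topological frontier in $B(x,\rho_1)$ lies in $L \cup \partial B(x,\rho_1) \cup u(\partial S_r \cap \ldots)$, but the last set is empty in the ball, so $\partial V \subseteq L \cup \partial B(x,\rho_1)$, and an open subset of a disk with frontier in (an embedded arc through its center) $\cup$ (its boundary sphere) must contain one of the two sides of the arc near the center. This gives a half-ball $B(x,\rho_2) \cap D^+$ in the image for some $\rho_2 = \rho_2(K_0,r_0,\Lambda,d_M(x,\cup_i K_i))$.

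Third, having a half-ball is enough: a half-ball of radius $\rho_2$ around $x \in L$ contains a full metric ball of radius $c\,\rho_2$ centered at a point at distance $\tfrac12\rho_2$ from $x$ into $D^+$, where $c$ depends only on the curvature bound $K_0$ and on the geodesic-curvature bound $\Lambda$ of $L$ (the curve $L$ bends by at most $\Lambda$, so at scale $\ll 1/\Lambda$ the "inside" region $D^+$ contains a round ball of comparable radius). Setting $\rho_0 := c\,\rho_2$ yields the theorem, and this $\rho_0$ depends only on $K_0$, $r_0$, $\Lambda$ and $d_M(x,\cup_i K_i)$, as required; combined with Proposition \ref{prop:monotonicity_absolute} applied to $u$ restricted to $u^{-1}$ of that round ball, one gets $\omega(u) \geq C' \rho_0^2$, which is exactly what is needed to run the argument of Theorem \ref{thm:main-precise} with $\epsilon$ removed.

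The main obstacle I anticipate is making the second step — "the image contains a whole half-ball" — fully rigorous rather than merely plausible. The map $u$ is only smooth (holomorphic), not an immersion, so its image can be genuinely complicated: it can fold, and the component $V$ accumulating at $x$ need not be simply connected or have nice boundary. One must argue carefully with the structure of the critical values of $u$ (a holomorphic map between surfaces has isolated critical points, so critical values are a locally finite set, and away from them $u$ is a local diffeomorphism), possibly passing to the boundary behavior of $u$ along $L$ via the reflection/Schwarz principle to understand how the image sits relative to $L$, and then invoking a degree or winding-number argument to conclude that the image covers one side. I expect the paper handles this with an explicit combinatorial analysis of the arcs $u^{-1}(L) \cap \partial S_r$ and $u^{-1}(\partial B)$, which is why the authors announce "some combinatoric arguments for curves on surfaces"; the quantitative control of $\rho_0$ in terms of $\Lambda$ comes precisely from ensuring $L \cap B(x,\rho_1)$ is a single well-behaved arc, which is where the Riemannian lemmas proved at the end of that section (the estimate for $\eta''$, Corollary \ref{cor:bound_2nd}) enter.
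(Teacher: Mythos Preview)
Your proposal has a genuine gap in the second step, and it is not the one you flag. The claim that, by shrinking $\rho_1$ in terms of $K_0,r_0,\Lambda$ alone, one can arrange $L\cap B(x,\rho_1)$ to be a \emph{single} embedded arc is false. A bound on the geodesic curvature of $L$ controls only local bending, not how often $L$ returns near $x$: on $\R^2$, take $L$ to be a long thin ``stadium'' (two parallel segments of length $\ell$ joined by semicircles of radius $1/\Lambda$); the curvature is everywhere $\le\Lambda$, yet the two straight sides can be at distance $\delta$ with $\delta$ arbitrarily small compared to $1/\Lambda$. For $x$ on one side, $L\cap B(x,2\delta)$ has two arcs. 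Forcing a single arc is exactly an $\epsilon$-tameness bound, which is precisely what Theorem \ref{thm:disk_dim2} is meant to remove. Once $L\cap B(x,\rho_1)$ has several arcs, your openness argument only shows that $u(\operatorname{int}S_r)$ contains some connected component of $B(x,\rho_1)\setminus L$ adjacent to $x$, and that component may be an arbitrarily thin sliver; no definite-radius ball follows. (A secondary issue: the theorem assumes $u$ smooth, not $J$-holomorphic, so the open-image step also needs justification, though this is harmless in the intended application.)

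The paper's argument avoids this entirely by never trying to control $L\cap B(x,\rho)$. Instead it slides an \emph{osculating disk} $D_\rho(s)=\overline{B_\rho(\exp_{\Gamma(s)}(\rho N(s)))}$ along the inward normal of the boundary arc $\Gamma\subseteq L$ containing $x$, and runs an iterative ``Moon in a puddle'' argument: Lemma \ref{lem:disk_curvature} says that a curve trapped in a ball of radius $\rho$ must have geodesic curvature $\ge\alpha/\rho$ at its farthest point, so with $\rho<\alpha/\Lambda$ the arc $\Gamma$ cannot re-enter $D_\rho(s)$ from nearby. If every $D_\rho(s)$ were hit by a far-away piece of $\Gamma$, a nested sequence of subarcs $[t_n,s_n]$ of decreasing length is produced, each still satisfying the same trapping property, yielding a contradiction. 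The ball one finds is therefore centred not at $x$ but at $\gamma_s(\rho)$ for some $s$ along $\Gamma$; closeness to $x$ is controlled only through $d_M(x,\cup_iK_i)$, which enters to keep the other boundary curves out of the picture.
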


We allow $n$ to be zero, in which case it is understood that $S_r=\D$, $\cup_i K_i=\emptyset$, and $d_M(x,\cup_i K_i)=\infty$ for any $x\in L$. \par

Although it was developed independently, our proof of Theorem \ref{thm:disk_dim2} uses a similar approach to a recent proof by Petrunin and Zamora Barrera of the so-called Moon in a puddle theorem \cite{PetruninZamora2021}. We recover their result by taking $M$ to be the Euclidian plane and $n$ to be zero. Indeed, one can check that, in this case, $\rho_0=\frac{1}{\Lambda}$. \par

\begin{rem} \label{rem:rho_0-continuous}
Much like the constants $C$, $C'$ and $i_0$, the constant $\rho_0$ depends continuously on $K_0$, $r_0$, $\Lambda$, and $d_M(x,\cup_i K_i)$.
\end{rem}

\subsection{Finding a good disk}  \label{subsec:finding-disk}
The proof of Theorem \ref{thm:disk_dim2} relies mostly on the following technical lemma, whose proof we will delay until the next subsection. \par

\begin{lem} \label{lem:disk_curvature}
Let $M$ be a complete Riemannian manifold of dimension $n\geq 2$ with $|K|\leq K_0$ and $r_\mathrm{inj}(M)\geq r_0>0$. There exist constants $\rho_1=\rho_1(K_0,r_0)>0$ and $\alpha=\alpha(K_0,r_0)>0$ with the following property. Let $\Gamma:[0,\ell]\to M$ be a unit-speed curve with image in $B_\rho(x)$ for some $0<\rho\leq \rho_1$ and some $x\in M$. Consider the map
\begin{center}
\begin{tikzcd}[row sep=0pt,column sep=1pc]
 d\colon [0,\ell] \arrow{r} & \R_{\geq 0} \\
  {\hphantom{d\colon{}}} s \arrow[mapsto]{r} & d_M(x,\Gamma(s)).
\end{tikzcd}
\end{center}
Suppose that $d$ achieve its maximum at $s_0\in (0,\ell)$. Then,
\begin{align*}
\left|\frac{D}{ds}\dot{\Gamma}(s_0)\right|\geq \frac{\alpha}{\rho}.
\end{align*}
\end{lem}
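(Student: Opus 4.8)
\textbf{Proof proposal for Lemma \ref{lem:disk_curvature}.}

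The plan is to compare $M$ with a model space of constant curvature and express the second derivative of the distance function along $\Gamma$ in terms of the Hessian of $d_M(x,\cdot)$ plus a geodesic-curvature correction. Concretely, set $f := d_M(x,\cdot)$ on the punctured ball $B_\rho(x)\setminus\{x\}$, where (after shrinking $\rho_1$ below $\tfrac12 r_0$) $f$ is smooth, $|\grad f| \equiv 1$, and $\grad f$ is the unit radial field. Along $\Gamma$ consider $h(s) := f(\Gamma(s)) = d(s)$. Differentiating twice,
\begin{align*}
h''(s) = \Hess f\bigl(\dot\Gamma(s),\dot\Gamma(s)\bigr) + \bigl\langle \grad f,\; \tfrac{D}{ds}\dot\Gamma(s)\bigr\rangle.
\end{align*}
At an interior maximum $s_0$ we have $h'(s_0) = \langle \grad f, \dot\Gamma(s_0)\rangle = 0$ and $h''(s_0)\leq 0$, so
\begin{align*}
\Bigl|\tfrac{D}{ds}\dot\Gamma(s_0)\Bigr| \;\geq\; \bigl|\langle \grad f, \tfrac{D}{ds}\dot\Gamma(s_0)\rangle\bigr| \;=\; \bigl|-h''(s_0) + \Hess f(\dot\Gamma,\dot\Gamma)\bigr| \;\geq\; \Hess f\bigl(\dot\Gamma(s_0),\dot\Gamma(s_0)\bigr),
\end{align*}
provided the last quantity is positive. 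So everything reduces to a uniform \emph{lower} bound of the form $\Hess f(v,v) \geq \alpha/\rho$ for unit vectors $v$ tangent to $\Gamma$ at a point at distance exactly $\rho' := d(s_0) \leq \rho$ from $x$; note $v \perp \grad f$ there, so only the ``spherical'' part of the Hessian matters.

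The second fundamental form of the geodesic sphere $\partial B_{\rho'}(x)$ is exactly $\Hess f$ restricted to the tangent space of the sphere, and the Rauch/Hessian comparison theorem (using $|K|\leq K_0$) gives, for $v\perp\grad f$ with $|v|=1$,
\begin{align*}
\Hess f(v,v) \;\geq\; \sqrt{K_0}\,\cot\bigl(\sqrt{K_0}\,\rho'\bigr) \;\geq\; \frac{1}{\rho'} \;-\; \frac{K_0\,\rho'}{3} \;-\; O(\rho'^3) \;\geq\; \frac{1}{2\rho'} \;\geq\; \frac{1}{2\rho},
\end{align*}
once $\rho_1$ is chosen small enough (depending only on $K_0$) that the curvature correction eats at most half of the $1/\rho'$ term; when $K_0 = 0$ one simply has $\Hess f(v,v) = 1/\rho'$. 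Here I would also need $\rho'$ not too small relative to... actually no: the bound $\Hess f(v,v)\geq 1/(2\rho')\geq 1/(2\rho)$ only improves as $\rho'$ shrinks, so there is no lower constraint on $\rho'$. This yields the claim with $\alpha = \tfrac12$ (and $\rho_1 = \rho_1(K_0, r_0)$), and the continuity-in-parameters assertion of Remark \ref{rem:rho_0-continuous} is immediate since $\sqrt{K_0}\cot(\sqrt{K_0}\,t)$ is jointly continuous.

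The one subtlety — and the main thing to get right — is the degenerate case where $v := \dot\Gamma(s_0)$ is \emph{not} purely tangential to the sphere, i.e.\ the case I dismissed too quickly: at the maximum $h'(s_0)=0$ forces $\langle\grad f,\dot\Gamma(s_0)\rangle = 0$ exactly, so $\dot\Gamma(s_0)$ \emph{is} tangent to $\partial B_{\rho'}(x)$ and the computation above is legitimate. One should double-check the borderline possibility $\rho' = \rho$ (the curve touches the boundary sphere of the ambient ball $B_\rho(x)$): this is fine, the estimate is stated for $d(s_0)\leq\rho$ and uses only $\rho'\leq\rho\leq\rho_1$. Finally, I should make sure $f$ is twice differentiable at $\Gamma(s_0)$: since $\rho_1 < \tfrac12 r_0 \leq \tfrac12 r_\mathrm{inj}(M)$ and $\Gamma(s_0)\neq x$ (else $d\equiv 0$ and there's nothing to prove, or $\Gamma$ is constant), $\Gamma(s_0)$ lies in the smooth locus of $f$, so the Hessian comparison applies verbatim. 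Thus the genuinely delicate point is purely bookkeeping about which regime one is in; the analytic heart is the one-line Hessian comparison for the distance function.
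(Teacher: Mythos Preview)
Your argument is correct. Both proofs rest on the same analytic fact --- a lower bound on the Hessian of the distance function $f=d_M(x,\cdot)$ in directions tangent to geodesic spheres --- but they package it differently. The paper sets up the family of radial geodesics $\gamma_s(t)=\exp_x(t\tilde\Gamma(s_0+s))$, invokes the first and second variation formulae for the energy, and then bounds the index form $I(J,J)$ from below by estimating $|J|$ and $|\dot J|$ separately via Rauch comparison and the Hessian comparison theorem; this yields an explicit $\alpha$ (depending on $K_0$ and $\rho_1$) that tends to $1$ as $K_0\to 0$. You instead differentiate $h=f\circ\Gamma$ directly, use $h''=\Hess f(\dot\Gamma,\dot\Gamma)+\langle\grad f,\tfrac{D}{ds}\dot\Gamma\rangle$ together with $h'(s_0)=0$, $h''(s_0)\le 0$, and apply the Hessian comparison bound $\Hess f(v,v)\ge\sqrt{K_0}\cot(\sqrt{K_0}\rho')$ as a black box. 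Your route is shorter and avoids the Jacobi-field bookkeeping; the paper's route is more self-contained (it essentially reproves the needed Hessian estimate) and gives a slightly sharper constant. Either way the heart of the matter is the same one-line comparison, and your handling of the edge cases ($\Gamma(s_0)\neq x$, $\dot\Gamma(s_0)\perp\grad f$, smoothness of $f$ for $\rho_1<\tfrac12 r_0$) is fine.
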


By taking $M=\R^2$ and the limit $K_0\to 0$, we recover the classical fact that, on a loop contained in a disk of radius $\rho>0$, there is a point were its curvature is at least $\frac{1}{\rho}$. \par

Before going in the proof of Theorem \ref{thm:disk_dim2}, we note that Lemma \ref{lem:disk_curvature} gives a precise bound on the smallest metric ball that a submanifold $L$ with second fundamental form $||B_L||\leq\Lambda$ can be contained in. \par

\begin{cor} \label{cor:bound_2nd}
Let $M$, $K_0$, $\rho_0$ and $\alpha$ be as above. Let $L$ be a closed submanifold of $M$ contained in a metric ball $B_\rho(x)$ for some $0<\rho\leq \rho_0$ and some $x\in M$. Then, its second fundamental form respects
\begin{align*}
||B_L||\geq \frac{\alpha}{\rho}.
\end{align*}
\end{cor}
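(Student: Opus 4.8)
The plan is to derive Corollary \ref{cor:bound_2nd} directly from Lemma \ref{lem:disk_curvature} applied to geodesics living inside $L$. Suppose $L\subseteq B_\rho(x)$ with $0<\rho\leq\rho_0$. Since $L$ is a closed submanifold, the function $p\mapsto d_M(x,p)$ restricted to $L$ attains its maximum at some point $p_0\in L$. Pick any unit tangent vector $v\in T_{p_0}L$ and let $\Gamma:(-\eta,\eta)\to L$ be the geodesic of $(L,g|_L)$ with $\Gamma(0)=p_0$, $\dot\Gamma(0)=v$; reparametrizing by arclength and translating, we may view $\Gamma$ as a unit-speed curve $[0,\ell]\to M$ whose image lies in $L\subseteq B_\rho(x)$ and for which $s\mapsto d_M(x,\Gamma(s))$ attains its maximum at an interior parameter $s_0$ corresponding to $p_0$.

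Next I would apply Lemma \ref{lem:disk_curvature} to this $\Gamma$, which gives
\begin{align*}
\left|\frac{D^M}{ds}\dot\Gamma(s_0)\right|\geq\frac{\alpha}{\rho},
\end{align*}
where $\frac{D^M}{ds}$ is covariant differentiation along $\Gamma$ with respect to the ambient Levi-Civita connection of $(M,g)$. The key point now is the Gauss formula: for a curve inside the submanifold $L$,
\begin{align*}
\frac{D^M}{ds}\dot\Gamma(s)=\frac{D^L}{ds}\dot\Gamma(s)+B_L(\dot\Gamma(s),\dot\Gamma(s)),
\end{align*}
where $\frac{D^L}{ds}$ is the intrinsic covariant derivative in $L$ and the two terms on the right are respectively tangent and normal to $L$, hence orthogonal. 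Because $\Gamma$ is an $L$-geodesic, $\frac{D^L}{ds}\dot\Gamma\equiv 0$, so $\frac{D^M}{ds}\dot\Gamma(s_0)=B_L(v,v)$ and therefore $|B_L(v,v)|\geq\frac{\alpha}{\rho}$. Finally, since $v$ is a unit vector, $|B_L(v,v)|\leq |B_L(p_0)|_{\tilde g_{p_0}}\leq\|B_L\|$ by the definition of the norm of the second fundamental form, giving $\|B_L\|\geq\frac{\alpha}{\rho}$ as claimed.

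The only mild subtlety — and the step I would be most careful about — is making sure the hypotheses of Lemma \ref{lem:disk_curvature} are genuinely met: one needs a unit-speed curve in the \emph{ambient} metric whose image sits in the ball $B_\rho(x)$ and whose ambient distance-to-$x$ function has an \emph{interior} maximum. Taking an intrinsic geodesic of $L$ through the farthest point $p_0$ handles the interior-maximum issue (the maximum over all of $L$ is in particular a maximum along the short geodesic arc, and $p_0$ is not an endpoint once we extend the arc slightly to both sides), while the containment $\Gamma\subseteq L\subseteq B_\rho(x)$ is automatic. One should also note that an intrinsic $L$-geodesic is automatically parametrized by arclength with respect to $g|_L$, which is the restriction of $g$, so it is unit-speed in the ambient metric as well; no rescaling of the conclusion is needed. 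With these points checked, the corollary follows immediately, and it is exactly this estimate (with $\rho_0$ in the role of the $\eta''$ used in the proof of Theorem \ref{thm:main-precise}) that forces a $\Lambda$-bounded closed Lagrangian not to fit inside too small a metric ball.
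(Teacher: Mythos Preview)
Your proof is correct and matches the paper's own argument essentially line for line: pick the point $p_0\in L$ maximizing $d_M(x,\cdot)$, run an intrinsic $L$-geodesic through it, apply Lemma \ref{lem:disk_curvature}, and use that the ambient covariant acceleration of an $L$-geodesic is exactly the (vector-valued) second fundamental form. The paper is simply terser, writing the last step as the single identity $\big|B_L(\dot\gamma(0),\dot\gamma(0),\tfrac{D}{ds}\dot\gamma/|\tfrac{D}{ds}\dot\gamma|(0))\big|=\big|\tfrac{D}{ds}\dot\gamma(0)\big|$ rather than invoking the Gauss formula explicitly.
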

\begin{pr*}
Since $L$ is closed, there is a point $y\in L$ such that the map $x'\mapsto d_M(x,x')$, seen as a map $L\to\R$, achieve its maximum at $y$. Let $\gamma:(-\epsilon,\epsilon)\to L$ be a unit-speed geodesic of $L$ such that $\gamma(0)=y$. By Lemma \ref{lem:disk_curvature}, we have that
\begin{align*}
\left|B_L\left(\dot{\gamma}(0),\dot{\gamma}(0),\frac{\frac{D}{ds}\dot{\gamma}}{|\frac{D}{ds}\dot{\gamma}|}(0)\right)\right|=\left|\frac{D}{ds}\dot{\gamma}(0)\right|\geq \frac{\alpha}{\rho}.
\end{align*}
\end{pr*}

Before going in the proof of Theorem \ref{thm:disk_dim2}, we reduce to the case where $M$ is simply connected. Suppose that it is not. We then consider its universal cover $\pi:\widetilde{M}\to M$. Note that $\widetilde{M}$ comes naturally equipped with a Riemannian metric $\pi^*g$, which turns $\pi$ into a local isometry. Therefore, we have that the Gaussian curvature of $\widetilde{M}$ respects $|\tilde{K}|\leq K_0$ and that $r_\mathrm{inj}(\widetilde{M})\geq r_\mathrm{inj}(M)\geq r_0$. The inequality between the injectivity radii follows from the classical result of Klingenberg \cite{Klingenberg1959} that
\begin{align} \label{eqn:r_inj}
r_\mathrm{inj}(M)=\min\left\{r_\mathrm{conj}(M),\frac{1}{2}\ell(M)\right\}.
\end{align}
Here, $r_\mathrm{conj}(M)$ is the length of the shortest geodesic segment $\gamma:[0,T]\to M$ such that there exists a normal Jacobi field $J$ along $\gamma$ with $J(0)=J(T)=0$. Likewise, $\ell(M)$ is the length of the shortest \emph{immersed} geodesic loop on $M$. \par

Since $S_r$ is contractible, $u$ admits a lift $\tilde{u}:S_r\to\widetilde{M}$. Then, $d_M(x,\cup_i K_i)$ gives a lower bound on the distance between $\tilde{x}\in \tilde{u}(S_r)$ such that $\pi(\tilde{x})=x$ and the components of $\tilde{u}(\del S_r)$ that do not contain $\tilde{x}$. \par

Therefore, the metric bounds on $M$ are also respected by $\widetilde{M}$. It thus indeed suffices to prove Theorem \ref{thm:disk_dim2} on the universal cover: if $B_{\rho_0}(\tilde{x})$ is the metric ball in $\widetilde{M}$ given by the theorem, then $B_{\rho_0}(\pi(\tilde{x}))=\pi(B_{\rho_0}(\tilde{x}))$ will be the sought-after ball in $M$. Note that this is indeed a topological ball as we may take $\rho_0\leq r_0$. Therefore, for the rest of the proof, we will assume that $M$ is simply connected. \par

We now fix an injective unit-speed parametrization $\Gamma:[0,\ell]\to M$ of the component of $u(\del S_r)$ containing $x$, i.e.\ $\Gamma$ parametrize a segment of $L$ containing $x$, and $\Gamma(0),\Gamma(\ell)\in L\cap (\cup_{i=1}^n K_i)$ when $n>0$. When $n=0$, $\Gamma$ is just a parametrization of $L$. We then take the convention that $\Gamma(0)=\Gamma(\ell)\neq x$. \par

The proof has three main steps.
\begin{enumerate}[label=(\arabic*)]
\item We define a notion of an "osculating disk" $D_\rho(s)$ of the curve $\Gamma$ at $s$. This disk is a closed metric ball of $M$ which has the property that $D_\rho(s)\cap\Gamma([s-\epsilon,s+\epsilon])\subseteq \del D_\rho(s)$ for some $\epsilon>0$.
\item We find $s_0,t_0\in [0,\ell]$ with the following property: if $p\in \operatorname{int}(D_\rho(s))\cap\Gamma([0,\ell])$ for any $s\in [s_0,t_0]$, then there also exists $t\in [t_0,s_0]$ such that $\Gamma(t)\in \operatorname{int}(D_\rho(s))$.
\item We suppose that $\operatorname{int}(D_\rho(s))\cap\Gamma([0,\ell])\neq\emptyset$ for all $s\in [s_0,t_0]$, and we get a contradiction.
\end{enumerate}

Denote by $r_\mathrm{conv}(M)$ the convexity radius of $M$, i.e.\ the largest $\rho>0$ such that, for all $x\in M$ and all $y,y'\in B_\rho(x)$, there exists a minimizing geodesic from $y$ to $y'$ in $B_\rho(x)$. It is a classical result from Berger \cite{Berger1976} that
\begin{align*}
r_\mathrm{conv}(M)\geq \frac{1}{2}\min\left\{r_\mathrm{inj}(M),\frac{\pi}{\sqrt{K_0}}\right\}.
\end{align*}
By taking a smaller $\rho_0$ if necessary, we may assume that $\rho_0\leq \min\{\frac{r_0}{2},\frac{\pi}{2\sqrt{K_0}},\rho_1\}$. Therefore, $\rho\leq \rho_0$ implies that $\rho_0\leq r_\mathrm{conv}(M)$ and that $\rho$ respects Lemma \ref{lem:disk_curvature}. \par

\begin{rem} \label{rem:convexity-radius}
In fact, a more recent result of Dibble \cite{Dibble2017} gives
\begin{align}
r_\mathrm{conv}(M)=\min\left\{r_\mathrm{foc}(M),\frac{1}{4}\ell(M)\right\},
\end{align}
where $r_\mathrm{foc}(M)$ is the length of the shortest geodesic segment $\gamma:[0,T]\to M$ such that there exists a normal Jacobi field $J$ along $\gamma$ with $J(0)=\langle J',J \rangle(T)=0$. This can be used to give a better estimate on the optimal $\rho_0$ for a given $(M,g_J)$.
\end{rem}

We thus begin with the definition of $D_\rho(s)$. Let $s\in (0,\ell)$ and $\rho\in (0,\rho_0)$. We define $D_\rho(s)$ to be the closed metric ball $\overline{B_\rho(\gamma_s(\rho))}$. Here, $\gamma_s(t):=\exp_{\Gamma(s)}(tN(s))$ for $t\in [0,r_\mathrm{inj}(M)]$, and $N$ is the unit-length vector field along $\Gamma$ which is orthogonal to $\dot{\Gamma}$ and pointing toward the interior of the topological disk $\overline{u(S_r)}$. \par

\begin{lem} \label{lem:disk_osculating}
Let $s\in (0,\ell)$ and $\rho\in (0,\min\{\frac{\alpha}{\Lambda},\rho_1\})$, where $\rho_1$ and $\alpha$ are the constant appearing in Lemma \ref{lem:disk_curvature}. There exists $\epsilon>0$ such that
\begin{align*}
D_\rho(s)\cap \Gamma([s-\epsilon,s+\epsilon])\subseteq \del D_\rho(s).
\end{align*}
\end{lem}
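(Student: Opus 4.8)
The plan is to show that near $s$, the curve $\Gamma$ cannot dip into the open ball $\operatorname{int} D_\rho(s)$, by comparing the distance from the center $\gamma_s(\rho)$ along $\Gamma$ against the constraint coming from Lemma \ref{lem:disk_curvature}. First I would set up the comparison function $f(\sigma) := d_M(\gamma_s(\rho), \Gamma(\sigma))$ for $\sigma$ in a small interval around $s$; by construction of $D_\rho(s)$ we have $f(s) = \rho$, so $\Gamma(s) \in \del D_\rho(s)$, and the content of the lemma is that $f(\sigma) \geq \rho$ for $\sigma$ near $s$, i.e.\ $s$ is a local minimum of $f$. Note that $\gamma_s$ is the geodesic leaving $\Gamma(s)$ in the normal direction $N(s)$, so the radial geodesic from $\gamma_s(\rho)$ to $\Gamma(s)$ arrives at $\Gamma(s)$ orthogonally to $\dot\Gamma(s)$; hence by the Gauss lemma $f'(s) = 0$ automatically, and the question is genuinely about the second-order behavior.

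The key step is to compute, or rather bound below, the second derivative $f''(s)$ (one-sided derivatives suffice, and one must be slightly careful that $f$ is smooth here since $\rho < \rho_0 \le r_{\mathrm{conv}}(M)$ keeps everything inside a convex ball where $d_M(\gamma_s(\rho), \cdot)$ is smooth away from the center, and $\Gamma(s) \ne \gamma_s(\rho)$). I would use the standard second-variation / Hessian-of-distance formula: writing $\rho(\cdot) = d_M(\gamma_s(\rho), \cdot)$, one has $f''(s) = \operatorname{Hess}\rho(\dot\Gamma(s),\dot\Gamma(s)) + \langle \grad\rho|_{\Gamma(s)}, \tfrac{D}{ds}\dot\Gamma(s)\rangle$. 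The first term is controlled by the Rauch/Hessian comparison theorem: since $|K| \le K_0$ and the distance to the center is exactly $\rho$, $\operatorname{Hess}\rho(\dot\Gamma,\dot\Gamma) \ge \ct{K_0}(\rho)\,|\dot\Gamma_{\perp}|^2 \ge -\,(\text{something like } \sqrt{K_0}\tan(\sqrt{K_0}\rho)/\ldots)$ — in any case bounded below by a quantity of order $1/\rho$ for $\rho$ small, coming from the Euclidean model Hessian $1/\rho$ with a curvature correction. The second term: $\grad\rho|_{\Gamma(s)}$ is the unit vector $-N(s)$ (pointing from $\Gamma(s)$ back toward the center $\gamma_s(\rho)$, which is the inward normal), so the term equals $-\langle N(s), \tfrac{D}{ds}\dot\Gamma(s)\rangle$, whose absolute value is at most $|\tfrac{D}{ds}\dot\Gamma(s)| \le \Lambda$ by the geodesic-curvature bound. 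Putting these together, $f''(s) \ge \tfrac{\alpha'}{\rho} - \Lambda$ for a constant $\alpha'$ of the same flavor as the $\alpha$ in Lemma \ref{lem:disk_curvature} (indeed this is essentially the contrapositive content of that lemma: if $f$ had a local \emph{maximum} at the center's radius one would get the reverse, and the lemma quantifies exactly how the curvature of a curve in a small ball is forced to be large — here we run it to see that a curve staying outside the ball and tangent to its boundary would need curvature $< \alpha/\rho$). Since $\rho < \alpha/\Lambda$ (up to adjusting the constant), this gives $f''(s) > 0$ strictly, hence $s$ is a strict local minimum of $f$, which is exactly the assertion $D_\rho(s) \cap \Gamma([s-\epsilon,s+\epsilon]) \subseteq \del D_\rho(s)$ for $\epsilon$ small enough.

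The main obstacle I anticipate is making the constant bookkeeping match Lemma \ref{lem:disk_curvature} cleanly rather than re-deriving a comparison estimate from scratch: ideally the lemma is stated precisely so that it \emph{directly} yields the conclusion (a curve through $\Gamma(s)$ tangent to $\del B_\rho(\gamma_s(\rho))$ there, with $|\tfrac{D}{ds}\dot\Gamma(s)| \le \Lambda < \alpha/\rho$, cannot have $d_M(\gamma_s(\rho),\Gamma(\cdot))$ achieve an interior maximum at $s$ — equivalently, restricted near $s$ it cannot enter the open ball), so that the argument reduces to: apply Lemma \ref{lem:disk_curvature} with $x = \gamma_s(\rho)$, get a contradiction unless $\Gamma$ stays in $\overline{B_\rho(\gamma_s(\rho))}^{\,c} \cup \del D_\rho(s)$ locally. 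I would phrase the proof that way to avoid redoing the Hessian computation, invoking Lemma \ref{lem:disk_curvature} as a black box; the only genuinely new point to check is the Gauss-lemma observation that the configuration is set up so that $\Gamma(s)$ is a critical point of the distance-to-center function, which is immediate from the definition of $\gamma_s$ via the normal exponential map. A secondary technical point is ensuring $\rho_1$ is chosen (as in Lemma \ref{lem:disk_curvature}) small enough that the comparison geometry is valid — but this is already built into the hypothesis $\rho < \min\{\alpha/\Lambda, \rho_1\}$.
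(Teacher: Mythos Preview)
Your direct second-derivative argument is correct and in fact more streamlined than the paper's. The paper argues by contradiction: assuming $\Gamma$ enters $\operatorname{int} D_\rho(s)$ arbitrarily close to $s$ (say on the right, at a sequence $s+\epsilon_n$), it splits into two cases. If for every $n$ the curve also reaches a point at distance $\ge\rho$ from $\gamma_s(\rho)$ somewhere on $(s,s+\epsilon_n)$, then the distance function has an interior maximum on that interval and Lemma~\ref{lem:disk_curvature} applies verbatim, giving curvature $\ge\alpha/(\rho+\epsilon_n)\to\alpha/\rho>\Lambda$ at a sequence of points converging to $s$. Otherwise $\Gamma([s,s+\epsilon_n])\subseteq D_\rho(s)$ for some $n$, which forces $d''(s)\le 0$; the paper then observes that the second-variation computation from the \emph{proof} of Lemma~\ref{lem:disk_curvature} still goes through at the endpoint $s$ (using the first-variation identity $d'(s)=0$, i.e.\ your Gauss-lemma observation) and yields the same contradiction. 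Your approach simply computes $f''(s)>0$ once and for all and so dispenses with the case split; the paper's second case is essentially your computation phrased contrapositively, and once one has it the first case is redundant.

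Two clarifications on your write-up. First, the Hessian term is actually positive, not of the form ``$\ge -(\text{something})$'': since $\rho<\rho_1<\pi/(2\sqrt{K_0})$ and $\dot\Gamma(s)$ is unit and orthogonal to the radial direction, Hessian comparison gives $\operatorname{Hess} r(\dot\Gamma,\dot\Gamma)\ge\sqrt{K_0}\cot(\sqrt{K_0}\rho)$. One then checks that $\sqrt{K_0}\rho\cot(\sqrt{K_0}\rho)\ge\alpha$ for all $\rho\le\rho_1$ (unwinding the paper's formula for $\alpha$, this reduces to $\sinh t\ge\sin t$), so the hypothesis $\rho<\alpha/\Lambda$ gives $f''(s)>0$ on the nose and your worry about constant bookkeeping is unfounded. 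Second, your alternative plan of invoking Lemma~\ref{lem:disk_curvature} purely as a black box does not quite close the argument by itself: that lemma bounds curvature at an interior \emph{maximum} of the distance function, whereas you want $s$ to be a local \emph{minimum}, and negating one does not yield the other. Making the black-box route work leads exactly to the paper's case analysis. Your direct Hessian computation is the cleaner path.
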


We also put off the proof of the lemma until the next subsection. In order for Lemma \ref{lem:disk_osculating} to stand, we add the condition that $\rho_0\leq \frac{\alpha}{\Lambda}$ and take $\rho\in (0,\rho_0)$. \par

We now make our choice of $t_0$ and $s_0$. Let $t_0\in (0,\ell)$ be such that $\Gamma(t_0)=x$. If we have that $D_\rho(t_0)\cap \Gamma([0,\ell])\subseteq \del D_\rho(t_0)$, then we have proven the theorem. Indeed, for all $y\in \operatorname{int}(D_\rho(t_0))$,
\begin{align*}
d(y,u(\del S_r)-\Gamma([0,\ell]))\geq d(x,\cup_i K_i)-d(x,y)> \rho-\rho=0.
\end{align*}
Thus, $\operatorname{int}D_\rho(t_0)\cap u(\del S_r)=\emptyset$. The open ball $\operatorname{int}(D_\rho(t_0))$ must then be in $u(\operatorname{int} S_r)$ by construction, and it thus satisfies the conclusions of Theorem \ref{thm:disk_dim2}. \par

Suppose therefore that $D_\rho(t_0)\cap \Gamma([0,\ell])\nsubseteq \del D_\rho(t_0)$. Fix $s_0\in (0,\ell)$ such that
\begin{align*}
d_M(\Gamma(t_0),\Gamma(s_0))=\min\{d_M(\Gamma(t_0),\Gamma(s))\mid s\in [0,\ell], \Gamma(s)\in D_\rho(t_0)\}.
\end{align*}
We may indeed take $s_0$ to be in $(0,\ell)$, since $\Gamma(0),\Gamma(\ell)\notin D_\rho(t_0)$ by the hypotheses on $\rho$ when $n>0$. When $n=0$, $\Gamma$ is actually a loop, and we can still assume it by shifting the parametrization $\Gamma$. Indeed, if all of $L$ were in $D_\rho(t_0)$, then we would get a contradiction by Corollary \ref{cor:bound_2nd}. Furthermore, we must have that $\Gamma(s_0)\in\operatorname{int} D_\rho(t_0)$ and, by Lemma \ref{lem:disk_osculating}, $s_0\neq t_0$. Without loss of generality, we may assume that $s_0>t_0$. \par

Denote by $[\Gamma(t_0),\Gamma(s_0)]$ the unique minimizing geodesic segment in $D_\rho(t_0)$ from $\Gamma(t_0)$ to $\Gamma(s_0)$. By hypothesis on $\rho$, it exists and does not intersect $u(\del S_r)$ ---~except at its extremities of course. Then, $\Gamma([t_0,s_0])\cup [\Gamma(t_0),\Gamma(s_0)]$ is a continuously embedded loop in $M$. Therefore, since $M$ is simply connected, it bounds a topological disk and divides $M$ in two parts. \par

We now get to the proof that $[t_0,s_0]$ has the desired property, i.e.\ the proof of the second step. Suppose that there exists $p\in u(\del S_r)-\Gamma([t_0,s_0])$ such that $p\in D_\rho(s)$ for some $s\in [t_0,s_0]$ ---~obviously there would be nothing to prove if $p\in \Gamma([t_0,s_0])$. Since the topological disk $\Gamma([t_0,s_0])\cup [\Gamma(t_0),\Gamma(s_0)]$ divides $M$ in two, the minimizing geodesic from $\Gamma(s)$ to $p$ must intersect $\Gamma([t_0,s_0])\cup [\Gamma(t_0),\Gamma(s_0)]$ at some point $y$. This geodesic exists, is unique and is contained in $D_\rho(s)$, since $\rho<r_\mathrm{conv}(M)\leq r_\mathrm{inj}(M)$. Note that since $p\neq \Gamma(s)$ by hypothesis, we have that $y\in\operatorname{int}(D_\rho(s))$ and that $d_M(p,y)<d_M(p,\Gamma(s))$. \par

If $y\in \Gamma([t_0,s_0])$, then we have $y=\Gamma(t)\in D_r(s)$ for some $t\in [s_0,t_0]$ as desired. Suppose therefore that $y\in [\Gamma(t_0),\Gamma(s_0)]$. Clearly, it suffices to prove that either $\Gamma(t_0)$ or $\Gamma(s_0)$ is in $D_\rho(s)$. We thus suppose that it is not the case and get a contradiction. In this case, $[\Gamma(t_0),\Gamma(s_0)]$ divides $D_\rho(s)$ in exactly two parts, both with nonempty interior. Indeed, $y\in [\Gamma(t_0),\Gamma(s_0)]\cap\operatorname{int}(D_\rho(s))$. Moreover, if $D_\rho(s)-[\Gamma(t_0),\Gamma(s_0)]$ had more than two components, we would get a contradiction. To see this, note that it is equivalent to $[\Gamma(t_0),\Gamma(s_0)]\cap D_\rho(s)$ having more than one component. However, if $q, q'\in [\Gamma(t_0),\Gamma(s_0)]\cap D_\rho(s)$ were to belong to different components, there would still be a minimizing geodesic segment $[q,q']$ in $D_r(s)$. But then
\begin{align*}
\ell([\Gamma(t_0),q]\cup[q,q']\cup [q',\Gamma(s_0)])<\ell([\Gamma(t_0),\Gamma(s_0)])=d_M(\Gamma(t_0),\Gamma(s_0)),
\end{align*}
which is of course impossible. Furthermore, since
\begin{align*}
d_M(\Gamma(t_0),\Gamma(s_0))\leq d_M(\Gamma(t_0),\gamma_s(\rho))+d_M(\gamma_s(\rho),\Gamma(s_0))<2\rho,
\end{align*}
the segment $[\Gamma(t_0),\Gamma(s_0)]$ cannot intersect $\gamma_s(\rho)$. Therefore, $\gamma_s(\rho)$ must lie in the interior of one of the component of $D_\rho(s)-[\Gamma(t_0),\Gamma(s_0)]$. Furthermore, $\Gamma(s)$ must also lie in the interior of a component. Otherwise, we would have that $\Gamma(s)\in [\Gamma(t_0),\Gamma(s_0)]$, and then $\Gamma(s)$ would be a point of $D_\rho(t_0)$ with $d_M(\Gamma(t_0),\Gamma(s))<d_M(\Gamma(t_0),\Gamma(s_0))$, contradicting the very definition of $s_0$. \par

Suppose at first that $\gamma_s(\rho)$ lies in the same component as $\Gamma(s)$. We begin by showing that this implies that $p\in D_\rho(t_0)$. Suppose that it does not. Denote by $A$ the intersection of $\del D_\rho(s)$ and the component of $D_\rho(s)-[\Gamma(t_0),\Gamma(s_0)]$ not containing $\gamma_s(\rho)$. By minimality of $[\Gamma(t_0),\Gamma(s_0)]$, $A$ is an embedded arc. Furthermore, since $\del A\subseteq [\Gamma(t_0),\Gamma(s_0)]$, we have that $\del A\subseteq D_\rho(t_0)$. However, since $p\notin D_\rho(t_0)$, $A$ is not entirely contained in $D_\rho(t_0)$. In particular, the function $q\mapsto d_M(\gamma_{t_0}(\rho),q)$, $q\in A$, achieves its maximum at a point $q\in A-\del A$. This implies that $A$ must be normal to the geodesic segment $[\gamma_{t_0}(\rho),q]$ at $q$ ---~to see this, one can consider the energy functional along the minimal geodesics from $\gamma_{t_0}(\rho)$ to $A$ and use the fact that it has a critical point at $q$. But by construction, $A$ is also normal to the geodesic segment $[\gamma_s(\rho),q]$. Since $\dim M=2$ and $d(\gamma_{t_0}(\rho),q)>\rho=d(\gamma_s(\rho),q)$, this means that $[\gamma_s(\rho),q]\subsetneq [\gamma_{t_0}(\rho),q]$. \par

Denote by $q'$ the intersection of the geodesic segments $[\gamma_{t_0}(\rho),q]$ and $[\Gamma(t_0),\Gamma(s_0)]$. Suppose at first that $d_M(\Gamma(t_0),q')\leq\rho$. Then, $[\gamma_{t_0}(\rho),q']\subseteq \overline{B_\rho(\Gamma(t_0))}$, since $\rho<r_\mathrm{conv}(M)$. However, $\gamma_s(\rho)\in [\gamma_{t_0}(\rho),q']$, and thus $d_M(\Gamma(t_0),\gamma_s(\rho))\leq \rho$. This leads to a contradiction since we have supposed that $\Gamma(t_0)\notin D_\rho(s)$. Therefore, it must be so that $d_M(\Gamma(t_0),q')>\rho$. In particular,
\begin{align*}
d_M(\Gamma(s_0),q')= d_M(\Gamma(s_0),\Gamma(t_0))-d_M(q',\Gamma(t_0))<2\rho-\rho=\rho
\end{align*}
However, by definition of $s_0$, $d_M(\gamma_{t_0}(\rho),\Gamma(s_0))<\rho$. Therefore, we have that $[\gamma_{t_0}(\rho),q']\subseteq B_\rho(\Gamma(s_0))$ similarly as before. Thus, $d_M(\gamma_s(\rho),\Gamma(s_0))<\rho$ and $\Gamma(s_0)\in D_\rho(s)$, which is again a contradiction. Therefore, it must be so that $p\in D_\rho(t_0)$. \par

We now finally prove that $\gamma_s(\rho)$ cannot lie in the same component of $D_\rho(s)-[\Gamma(t_0),\Gamma(s_0)]$ as $\Gamma(s)$. Recall that $y$ is the intersection of the geodesic segments $[\Gamma(t_0),\Gamma(s_0)]$ and $[\gamma_s(\rho),p]$. Denote by $z$ the intersection of $[y,\Gamma(s_0)]$ and $\del D_\rho(s)$ ---~once again, minimality of the geodesic ensures that this is indeed a point. By definition of $y$, we have that
\begin{align*}
d_M(y,p)
&= d_M(\gamma_s(\rho),p)-d_M(\gamma_s(\rho),y) \\
&\leq \rho-d_M(\gamma_s(\rho),y) \\
&\leq d_M(\gamma_s(\rho),y)+d_M(y,z)-d_M(\gamma_s(\rho),y) \\
&< d_M(y,\Gamma(s_0)).
\end{align*}
Indeed, $[\gamma_s(\rho),y]\cup[y,z]$ is a path from the center of a ball of radius $\rho$ to its boundary, and thus must be of length at least $\rho$. \par

\begin{figure}
\centering
\begin{tikzpicture}
    \node[anchor=south west,inner sep=0] (image) at (0,0) {\includegraphics[width=0.7\textwidth]{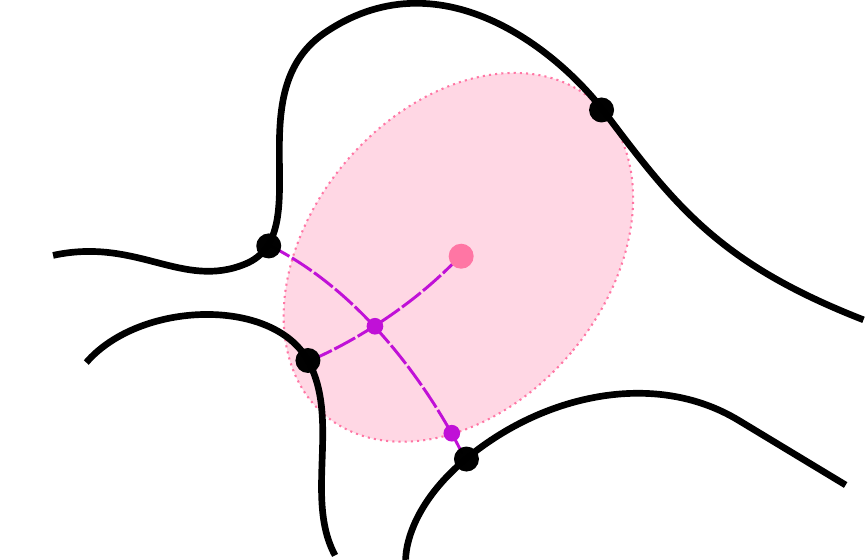}};
    \begin{scope}[x={(image.south east)},y={(image.north west)}]
        \node[FrenchPink] at (0.595,0.585) {$\gamma_s(\rho)$};
        \node[VividMulberry] at (0.475,0.4) {$y$};
        \node[VividMulberry] at (0.53,0.27) {$z$};
        \node[] at (0.32,0.33) {$p$};
        \node[] at (0.245,0.6) {$\Gamma(t_0)$};
        \node[] at (0.605,0.125) {$\Gamma(s_0)$};
        \node[] at (0.765,0.825) {$\Gamma(s)$};
    \end{scope}
\end{tikzpicture}
\vspace*{8pt}
\caption{Some relevant geodesic segments (in hatched purple) and points in the disk $D_\rho(s)$ (in pink).\label{fig:bords}}
\end{figure}

Therefore, we get
\begin{align*}
d_M(\Gamma(t_0),p)
&\leq d_M(\Gamma(t_0),y)+d_M(y,p) \\
&<d_M(\Gamma(t_0),y)+d_M(y,\Gamma(s_0)) \\
&=d_M(\Gamma(t_0),\Gamma(s_0)).
\end{align*}
By minimality of $s_0$, this implies that $p\notin \Gamma([0,\ell])$. But we also have that
\begin{align*}
d_M(\Gamma(t_0),\Gamma(s_0))<2\rho.
\end{align*}
Thus, if we also assume that $\rho_0\leq \frac{1}{2}d_M(x,\cup_i K_i)$, then $p$ will also not be in $u(\del S_r)-\Gamma([0,\ell])$. But this is of course a contradiction with the fact that $p\in u(\del S_r)$. \par

Therefore, $\gamma_s(\rho)$ and $\Gamma(s)$ must lie in different components of $D_\rho(s)-[\Gamma(t_0),\Gamma(s_0)]$. But this once again leads to a contradiction by the same logic as what we have just done: it suffices to replace every occurrence of $p$ by $\Gamma(s)$, and vice versa. Thus, it must be so that either $\Gamma(t_0)$ or $\Gamma(s_0)$ is in $D_\rho(s)$, and the second step of the proof is done. \par

For the third and final step of the proof, we suppose that $D_\rho(s)\cap \Gamma([t_0,s_0])\nsubseteq \del D_\rho(s)$ for all $s\in [t_0,s_0]$. By the previous step of the proof, if we get a contradiction, then we will have completed the proof. Fix $\epsilon\in (0,\rho_0-\rho)$. Suppose that there exist $s\neq t\in [t_0,s_0]$ such that $\Gamma(t)\in \operatorname{int}(D_\rho(s))$ and $\Gamma([t,s])\subseteq B_{\rho+\epsilon}(\gamma_t(\rho))$. By Lemma \ref{lem:disk_osculating}, the function $d(\tau)=d_M(\gamma_s(\rho),\Gamma(\tau))$ has a minimum at $s$. Since $d_M(\gamma_s(\rho),\Gamma(s))<\rho=d_M(\gamma_s(\rho),\Gamma(t))$, this thus implies that $d$ has a maximum at some point $s'\in (t,s)$. Therefore, by Lemma \ref{lem:disk_curvature}, we have that
\begin{align*}
\left|\frac{D}{ds}\dot{\Gamma}(s')\right|\geq\frac{\alpha}{\rho+\epsilon}>\Lambda,
\end{align*}
which is a contradiction. \par

\begin{figure}[ht]
\centering
\begin{tikzpicture}
    \node[anchor=south west,inner sep=0] (image) at (0,0) {\includegraphics[width=0.7\textwidth]{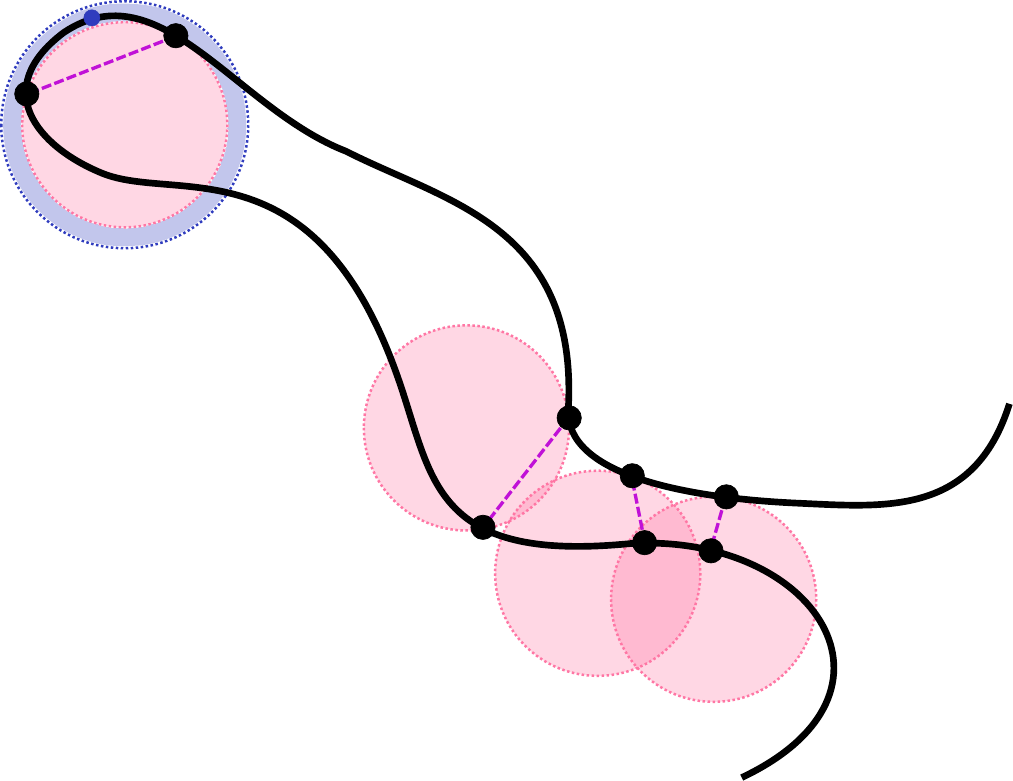}};
    \begin{scope}[x={(image.south east)},y={(image.north west)}]
        \node[DenimBlue] at (0.055,1.03) {$\left|\frac{D}{ds}\dot{\Gamma}\right|>\Lambda$};
        \node[] at (-0.05,0.88) {$\Gamma(s_n)$};
        \node[] at (0.25,0.98) {$\Gamma(t_n)$};
        \node[] at (0.43,0.28) {$\Gamma(s_2)$};
        \node[] at (0.62,0.49) {$\Gamma(t_2)$};
        \node[] at (0.595,0.25) {$\Gamma(s_1)$};
        \node[] at (0.665,0.425) {$\Gamma(t_1)$};
        \node[] at (0.7,0.235) {$\Gamma(s_0)$};
        \node[] at (0.765,0.4) {$\Gamma(t_0)$};
    \end{scope}
\end{tikzpicture}
\vspace*{8pt}
\caption{Multiple disks of the form $D_\rho(t_i)$ (in pink) with the inevitable point in $B_{\rho+\epsilon}(\gamma_{t_n}(\rho))$ breaking the curvature constraint (both in blue).\label{fig:neck}}
\end{figure}

Summarizing what we have shown, for all $s\in [t_0,s_0]$, there exists $t\in [t_0,s_0]$ such that $\Gamma(t)\in \operatorname{int}(D_\rho(s))$. However, for all such $t$ and $s$, we have that $\Gamma([t,s])\nsubseteq B_{\rho+\epsilon}(\gamma_t(\rho))$. In particular, we have that $s_0-t_0\geq 2\epsilon$. Take $t_1:=t_0+\epsilon$. Note that, for any $s\in [t_0,t_1)$, we cannot have $\Gamma(s)\in \operatorname{int}(D_\rho(t_1))$, since $\Gamma([s,t_1])\subseteq B_{\rho+\epsilon}(\gamma_{t_1}(\rho))$. Therefore, we may take $s_1\in (t_1,s_0]$ such that
\begin{align*}
d_M(\Gamma(t_1),\Gamma(s_1))=\min\{d_M(\Gamma(t_1),\Gamma(s))| s\in [t_0,s_0], \Gamma(s)\in D_\rho(t_1)\}.
\end{align*}
Note that the second step of the proof stays true when we replace $[t_0,s_0]$ for $[t_1,s_1]$. In particular, we can similarly define $t_2:=t_1+\epsilon$ and $s_2\in [t_1,s_1]$. Continuing like this, we inductively define $t_{n+1}:=t_n+\epsilon$ and $s_{n+1}\in (t_{n+1},s_n]$ such that
\begin{align*}
d_M(\Gamma(t_{n+1}),\Gamma(s_{n+1}))=\min\{d_M(\Gamma(t_{n+1}),\Gamma(s))| s\in [t_n,s_n], \Gamma(s)\in D_\rho(t_{n+1})\}
\end{align*} 
for all $n\geq 0$. But of course, by construction,
\begin{align*}
2\epsilon\leq s_n-t_n\leq s_0-t_0-n\epsilon.
\end{align*}
Therefore, we run into a contradiction for $n$ large enough. This finally concludes the proof of Theorem \ref{thm:disk_dim2}.

\begin{rem} \label{rem:rho_0}
Summarizing all the choices that have been made for $\rho_0$, we can take
\begin{align*}
\rho_0=\min\left\{ \frac{r_0}{2},\frac{\pi}{2\sqrt{K_0}},\rho_1,\frac{\alpha}{\Lambda},\frac{1}{2}d_M(x,\cup_i K_i) \right\},
\end{align*}
where $\rho_1$ and $\alpha$ are the constants appearing in Lemma \ref{lem:disk_curvature}. As we will see below, $\rho_1\to \infty$ and $\alpha\to 1$ as $K_0\to 0$. Therefore, when $n=0$, we get $\rho_0\to \frac{1}{\Lambda}$ as $K_0\to 0$ and $r_0\to\infty$, thus indeed recovering the Moon in a puddle theorem.
\end{rem}

\subsection{Proof of technical results} \label{subsec:technical_dim2}
We now give the two proofs we had omitted in the previous subsection. \par

\begin{proof}[Proof of Lemma \ref{lem:disk_curvature}]
The proof is done in two steps.
\begin{enumerate}[label=(\arabic*)]
\item For a well-chosen variation of the minimal geodesic from $x$ to $\Gamma(s)$, we use the second variation formula for the energy functional to get that
\begin{align*}
\left|\frac{D}{ds}\dot{\Gamma}(s_0)\right|\geq \frac{I}{\rho}
\end{align*}
for some $I>0$ depending on the variation. \par

\item We use results on Jacobi fields to get a lower bound on $I$. \par
\end{enumerate}

We begin with the proof of the first step. For $\epsilon>0$ small enough, we consider the variation
\begin{center}
\begin{tikzcd}[row sep=0pt,column sep=1pc]
 h\colon [0,1]\times(-\epsilon,\epsilon) \arrow{r} & M \\
  {\hphantom{d\colon{}}} (t,s) \arrow[mapsto]{r} & \gamma_s(t):=\exp_x(t\tilde{\Gamma}(s_0+s)),
\end{tikzcd}
\end{center}
where $\exp_x(\tilde{\Gamma}(s_0+s))=\Gamma(s_0+s)$. In particular, $d(s)=\ell(\gamma_s)$. Therefore, since the length and energy functionals have the same critical points, and that $d$ achieve its maximum at $s_0$, we have that
\begin{align} \label{eqn:first-variation}
0=\frac{1}{2}E'(0) 
&= \left\langle\frac{\partial h}{\partial s},\frac{\partial h}{\partial t}\right\rangle(1,0)-\left\langle\frac{\partial h}{\partial s},\frac{\partial h}{\partial t}\right\rangle(0,0)
\end{align}
and
\begin{align} \label{eqn:second-variation}
0\geq \frac{1}{2}E''(0)=I\left(\frac{\partial h}{\partial s},\frac{\partial h}{\partial s}\right)+\left\langle\frac{D}{\partial s}\frac{\partial h}{\partial s},\frac{\partial h}{\partial t}\right\rangle(1,0)-\left\langle\frac{D}{\partial s}\frac{\partial h}{\partial s},\frac{\partial h}{\partial t}\right\rangle(0,0).
\end{align}
Here, $I$ denotes the index form of $\gamma:=\gamma_0$. For vector fields $V$ and $W$ along $\gamma$, it is defined as
\begin{align*}
I(V,W)=\int_0^1 \left(\left\langle\frac{DV}{dt},\frac{DW}{dt}\right\rangle-\left\langle R(\dot{\gamma},V)\dot{\gamma},W\right\rangle\right)dt.
\end{align*}
Note that the last term on the right-hand side of (\ref{eqn:second-variation}) is zero since $h(0,s)=x$ for all $s$. Furthermore, the middle term can be bounded from below:
\begin{align*}
\left\langle\frac{D}{\partial s}\frac{\partial h}{\partial s},\frac{\partial h}{\partial t}\right\rangle(1,0)
&= \left\langle\frac{D}{ds}c'(s_0),\dot{\gamma}(1)\right\rangle \\
&\geq -\left|\frac{D}{ds}c'(s_0)\right|\left|\dot{\gamma}(1)\right| \\
&= -\left|\frac{D}{ds}c'(s_0)\right| d(s_0) \\
&\geq -\left|\frac{D}{ds}c'(s_0)\right| \rho.
\end{align*}
Therefore, (\ref{eqn:second-variation}) turns into the desired bound of $\left|\frac{D}{ds}c'(s_0)\right|$ in terms of $I=I(\frac{\partial h}{\partial s},\frac{\partial h}{\partial s})$ and $\rho$. \par

We now turn to the second step of the proof. First of all, note that $J(t):=\frac{\del h}{\del s}(t,0)$ is a Jacobi field along $\gamma$. Therefore, since $\rho<r_\mathrm{inj}(M)$, we have that $J(t)\neq 0$ for all $t>0$. Furthermore, (\ref{eqn:first-variation}) implies that $J(1)$ and $\dot{\gamma}(1)$ are orthogonal, since the last term on the right-hand side is zero, as noted previously. The same is then true of $J(t)$ and $\dot{\gamma}(t)$ for all $t$ by standard results on Jacobi fields. Therefore, the index form simplifies slightly:
\begin{align*}
I=\int_0^1 \left(|\dot{J}|^2-K\left(\dot{\gamma},J\right)|J|^2|\dot{\gamma}|^2\right)dt \geq \int_0^1 \left(|\dot{J}|^2-K_0d(s_0)^2|J|^2\right)dt,
\end{align*}
where $\dot{J}:=\frac{D}{dt}J$. \par

However, we have that
\begin{align*}
|\dot{J}||J|\geq |\langle \dot{J},J \rangle|=|\dot{\gamma}|\left|(\operatorname{Hess}\rho)(J,J)\right|=d(s_0)\left|(\operatorname{Hess}\rho)(J,J)\right|,
\end{align*}
where $\rho$ is the distance function from $x$. Therefore, by the Hessian comparison theorem (see for example \cite{GreeneWu1979}) and the Rauch comparison theorem, we get that
\begin{align*}
I
&\geq \int_0^1 \left(\cot^2(\sqrt{K_0}d(s_0)t)-1\right)K_0d(s_0)^2|J(t)|^2dt \\
&\geq |\dot{\tilde{\Gamma}}(s_0)|^2 \int_0^1 \left(\cot^2(\sqrt{K_0}d(s_0)t)-1\right)\sin^2(\sqrt{K_0}d(s_0)t)dt \\
&= \frac{\sin(2\sqrt{K_0}d(s_0))}{2\sqrt{K_0}d(s_0)}|\dot{\tilde{\Gamma}}(s_0)|^2.
\end{align*}
But, using the Rauch comparison theorem again, we get
\begin{align*}
1=|\dot{\Gamma}(s_0)|=|(d\exp_x)_{\tilde{\Gamma}(s_0)}(\dot{\tilde{\Gamma}}(s_0))|\leq \frac{\sinh(\sqrt{K_0}d(s_0))}{\sqrt{K_0}d(s_0)}|\dot{\tilde{\Gamma}}(s_0)|.
\end{align*}
Therefore, this finally implies that
\begin{align*}
I\geq \frac{\sqrt{K_0}d(s_0)\sin(2\sqrt{K_0}d(s_0))}{2\sinh^2(\sqrt{K_0}d(s_0))}.
\end{align*}
However, the function $\tau\mapsto (\tau\sin(2\tau))/(2\sinh^2(\tau))$ is positive and decreasing on $(0,\frac{\pi}{2})$. Therefore, if we take
\begin{align*}
\rho_1:=\min\left\{r_0,\frac{\pi}{2\sqrt{K_0}}\right\}-\epsilon
\end{align*}
for any $\epsilon>0$ small enough, than we will have
\begin{align*}
I\geq \frac{\sqrt{K_0}\rho_1\sin(2\sqrt{K_0}\rho_1)}{2\sinh^2(\sqrt{K_0}\rho_1)}=:\alpha>0,
\end{align*}
which concludes the proof of the lemma.
\end{proof}

\begin{proof}[Proof of Lemma \ref{lem:disk_osculating}]
We suppose the contrary and get a contradiction as follows.
\begin{enumerate}[label=(\arabic*)]
\item Since $D_\rho(s)\cap \Gamma([s-\epsilon,s+\epsilon])\nsubseteq \del D_\rho(s)$ for all $\epsilon>0$, there must exist a decreasing sequence $\{\epsilon_n\}_{n\geq 1}\subseteq \R_{>0}$ converging to 0 such that $\Gamma(s\pm \epsilon_n)\in D_\rho(s)$ for all $n\geq 1$. By passing to a subsequence and changing the orientation of the parametrization $\Gamma$ if needed, we may assume that $\Gamma(s + \epsilon_n)\in D_\rho(s)$ for all $n\geq 1$. \par

\item Since $\Gamma$ is parametrized by arclength, we have the inclusion $\Gamma((s,s+\epsilon_n))\subseteq B_{\rho+\epsilon_n}(\gamma_s(\rho))$. We recall that $\gamma_s(\rho)=\exp_{\Gamma(s)}(\rho N(s))$, and $N$ is the unit-length vector field along $\Gamma$ which is orthogonal to $\dot{\Gamma}$ and pointing toward the interior of the topological disk $\overline{u(S_r)}$. \par

\item Suppose that for all $n\geq 1$, there exists $s_n\in (s,s+\epsilon_n)$ such that 
\begin{align*}
d_M(\gamma_s(\rho),\Gamma(s_n))\geq\rho.
\end{align*}
Note that $s_n\to s$, because $\epsilon_n\to 0$. Since $\Gamma|_{[s,s+\epsilon_n]}$ respects the hypotheses of Lemma \ref{lem:disk_curvature}, we get
\begin{align*}
\left|\frac{D}{ds}\dot{\Gamma}(s)\right|
&= \lim_{n\to\infty}\left|\frac{D}{ds}\dot{\Gamma}(s_n)\right| \\
&\geq \lim_{n\to\infty}\frac{\alpha}{\rho+\epsilon_n} \\
&= \frac{\alpha}{\rho} \\
&> \Lambda,
\end{align*}
which is of course a contradiction. Therefore, it must be that $\Gamma([s,s+\epsilon_n])\subseteq D_\rho(s)$ for all $n\geq 1$. \par

\item Note that $s$ must be a critical point of the function $d$ of Lemma \ref{lem:disk_curvature} for $x=\gamma_s(\rho)$. Indeed, those correspond to the critical points of the energy functional along the variation $h(\tau,t)=\exp_{\gamma_s(\rho)}(\tau \tilde{\Gamma}(s+t))$, where $\exp_{\gamma_s(\rho)}(\tilde{\Gamma}(s+t))=\Gamma(s+t)$. But, for such a variation,
\begin{align*}
\frac{1}{2}E'(0)
&= \left\langle \frac{\del h}{\del t}, \frac{\del h}{\del \tau} \right\rangle (1,0) \\
&= \left\langle \dot{\Gamma}(s),\left.\frac{d}{d\tau}\exp_{\gamma_s(\rho)}(\tau\tilde{\Gamma}(s))\right|_{\tau=1} \right\rangle \\
&= \frac{1}{\rho} \left\langle \dot{\Gamma}(s),-N(s) \right\rangle \\
&= 0.
\end{align*}
Indeed, the path $\tau\mapsto\exp_{\gamma_s(\rho)}(\tau\tilde{\Gamma}(s))$ is the unique minimizing geodesic of speed $|\tilde{\Gamma}(s)|=\rho$ from $\gamma_s(\rho)$ to $\Gamma(s)$. This is just $\gamma_s$ parametrized in the opposite orientation and with a different speed. \par

Furthermore, the fact that $\Gamma([s,s+\epsilon_n])\subseteq D_\rho(s)$ for all $n\geq 1$ implies that $d$ must have nonpositive second derivative at $s$. This in turn implies that $E''(0)\leq 0$. Therefore, all the proof of Lemma \ref{lem:disk_curvature} still works, and we get a contradiction:
\begin{align*}
\left|\frac{D}{ds}\dot{\Gamma}(s)\right|\geq\frac{\alpha}{\rho}>\Lambda.
\end{align*}
\end{enumerate}
\end{proof}

\section{Badly-behaved sequences} \label{sec:bad}
We conclude this paper with examples of sequences of Lagrangian submanifolds. These sequences show that bounds of curvature type are needed to ensure convergence in the Hausdorff metric. \par

Consider the sequence of Hamiltonians $\{H_n(x,y):=\frac{1}{n}\sin(nx)\}_{n\geq 1}$ on the 2-torus $\mathds{T}^2=\R^2/2\pi\Z^2$. We equip the torus with the standard symplectic form $\omega_0$, the standard complex structure $J_0$, and the flat metric $g_0=g_{J_0}$. A quick calculation shows that the induced Hamiltonian flow of $H_n$ is given by
\begin{align*}
\phi^t_n(x,y)=(x,y+t\cos(nx)),
\end{align*}
for all $t\geq 0$, $n\geq 1$ and $(x,y)\in \mathds{T}^2$. We then set
\begin{align*}
L_0:=\{y=0\} \quad\text{and}\quad L_n:=\phi^1_n(L_0)=\{y=\cos(nx)\}.
\end{align*}

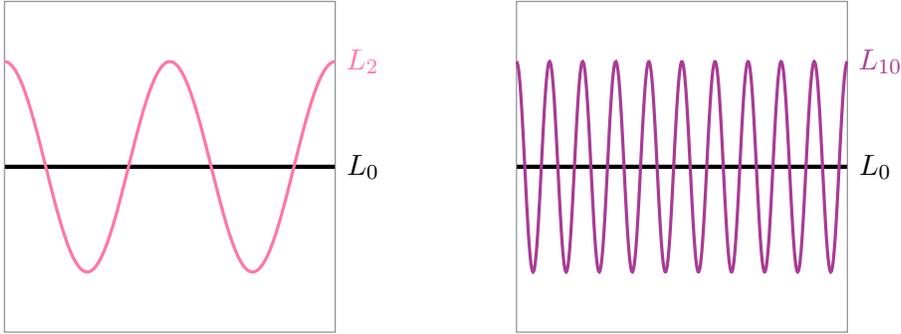
\begin{figure}[ht]
\centering
\begin{tikzpicture}[scale=0.7]
  \draw[-,ultra thick] (-pi, 0) -- (pi, 0) node[right] {$L_0$};
  \draw[gray,thin] (-pi,-pi) rectangle (pi,pi);
  \draw[domain=-pi:pi,samples=300,FrenchPink,very thick] plot(\x,{2*cos(2*\x r)}) node[right] {$L_2$};
\end{tikzpicture}
\qquad\qquad
\begin{tikzpicture}[scale=0.7]
  \draw[-,ultra thick] (-pi, 0) -- (pi, 0) node[right] {$L_0$};
  \draw[gray,thin] (-pi,-pi) rectangle (pi,pi);
  \draw[domain=-pi:pi,samples=500,Mulberry,very thick] plot(\x,{2*cos(10*\x r)}) node[right] {$L_{10}$};
\end{tikzpicture}
\caption{$L_n$ as $n$ gets larger}
\label{fig:bad-sequence_hofer}
\end{figure}

Note that $d_H(L_0,L_n)=\frac{2}{n}$, and thus $L_n$ tends to $L_0$ in the Lagrangian Hofer metric. Indeed, $\frac{2}{n}$ is an upper bound of $d_H(L_0,L_n)$, since it is the oscillation of $H_n$. On the other hand, by Corollary 3.13 of \cite{BarraudCornea2006}, $d_H(L_0,L_n)$ is bounded from below by the area of a strip. However, it is clear that such a strip must have area at least $\frac{2}{n}$. \par

On the other hand, as Figure \ref{fig:bad-sequence_hofer} suggests, $L_n$ tends to the full band $\{-1\leq y\leq 1\}$ in the Hausdorff metric $\delta_H$ induced by the flat metric. Therefore, even though the sequence converges in both metric, the limits are quite different. Actually, we have that $\delta_H(L_0,L_n)\equiv 1$. \par

The conjecture does not apply to this sequence of Lagrangian submanifolds, as the norm of the second fundamental form is clearly unbounded. Actually, a quick calculation gives that 
\begin{align*}
||B_{L_n}||=\max_{p\in K_n}|\kappa(p)|=n,
\end{align*}
where $\kappa$ denotes the geodesic curvature. \par

Note that this example can easily be generalized to higher dimensional tori. Likewise, by multiplying $H_n$ by a cutoff function, this example can be transposed to any symplectic manifold using a Darboux chart. Finally, since $d_H$ bounds from above every other metric mentioned in the introduction, this problem applies to every known metric for which Theorem \ref{thm:main} holds. In other words, this is a universal example. \par

However, it is possible to get a sequence of Lagrangian submanifolds with exploding curvature, but where limits in a $J$-adapted metric and the Hausdorff metric coincide. For example, one can do the analogous construction as above, but with Hamiltonians $G_n:=\frac{1}{\sqrt{n}}H_n$. Indeed, the associated Hamiltonian flows are
\begin{align*}
\psi^t_n(x,y)=\left(x,y+\frac{t}{\sqrt{n}}\cos(nx)\right),
\end{align*}
and we get the Lagrangian submanifolds
\begin{align*}
K_n:=\psi^1_n(L_0)=\left\{y=\frac{1}{\sqrt{n}}\cos(nx)\right\}.
\end{align*}
By arguments similar to the above, one gets $d_H(L_0,K_n)=2n^{-3/2}$ and $\delta_H(L_0,K_n)=n^{-1/2}$. Therefore, $K_n$ tends to $L_0$ in both the Lagrangian Hofer and the Hausdorff metric. However, it is easy to calculate that the maximum of the curvature of $K_n$ is given by
\begin{align*}
\max_{p\in K_n}|\kappa(p)|=\sqrt{n}.
\end{align*}
This, of course, tends to infinity as $n$ tends to infinity. \par

\begin{rem}
Note that, in the sequence $\{L_n\}$ above, not only does $||B_{L_n}||$ tends to infinity, but it is also impossible to uniformly tame the Lagrangian submanifolds in the sequence. Indeed, the distance between two successive zeroes of $y=\cos(nx)$ is $\frac{\pi}{n}$ in $M$, but is at least 2 in $L_n$. Therefore,
\begin{align*}
\lim_{n\to\infty}\inf_{x\neq y\in L_n}\frac{d_M(x,y)}{\min\{1,d_L(x,y)\}}=0.
\end{align*}
We expect this phenomenon to be general: when $M$ is simply connected, control over the second fundamental form should give enough control over tameness for the proof of Theorem \ref{thm:main-precise} to still work. It would then be possible to pass to the universal cover to get the desired result, just as we have done in the proof of Theorem \ref{thm:main-precise_dim2}. \par

In order to clarify our intuition, let us note that $L$ being $\epsilon$-tame is equivalent to the following condition: for all $x\in L$, and for all $y\in B_\epsilon(x)\cap L$, we have that
\begin{align} \label{eqn:t'1}
d_M(x,y)\leq \frac{1}{\epsilon}d_L(x,y).
\end{align}
It is quite clear that any type of bound on curvature cannot stop $\epsilon$ from being arbitrarily small at some point $x\in L$. On the hand, as Figure \ref{fig:neck} from the previous section suggests, having this condition at every $x\in L$ should force a certain lower bound on $||B_L||$. Therefore, for any $L\in\mathscr{L}^\star_\Lambda(M)$, there should be some $x\in L$ where the optimal epsilon appearing in (\ref{eqn:t'1}) is bounded from below by some constant $e=e(K_0,r_0,\Lambda)>0$. We could then apply Proposition \ref{prop:monotonicity} on some appropriate metric ball centered at this $x$; the size of the ball would only depend on $K_0$, $r_0$, and $\Lambda$.
\end{rem}

\clearpage
\bibliographystyle{alpha}
\bibliography{convergence6}

\textsc{Department of Mathematics and Statistics, Universit\'e de Montr\'eal, C.P. 6128 succ. Centre-ville, Montreal, QC, H3C 3J7, Canada} \\
\textit{E-mail}: \href{mailto:jean-philippe.chasse@umontreal.ca}{jean-philippe.chasse@umontreal.ca}

\end{document}